\def\R{\mathbb R}
\def\N{\mathbb N}
\newcommand\pa{{\mathfrak a}}
\def\<{\langle}
\def\>{\rangle}
\def\<{\langle}
\def\>{\rangle}
\newcommand\lb{\llbracket}
\newcommand\rb{\rrbracket}
\newcommand{\DeltaPerp}{\Delta^{\perp}}
\numberwithin{equation}{section}
\newtheorem{theorem}{Theorem}
\newtheorem{lemma}[theorem]{Lemma}
\newtheorem{proposition}[theorem]{Proposition}
\newtheorem{corollary}[theorem]{Corollary}
\newtheorem{definition}[theorem]{Definition\rm}
\newtheorem{remark}[theorem]{Remark}
\newtheorem{claim}[theorem]{Claim}
\numberwithin{theorem}{section}
\title[Abnormal Foliations and the Sard Conjecture]{Abnormal Singular Foliations and the Sard Conjecture for generic co-rank one distributions}
\author{A.~Belotto da Silva}
\address{Universit\'{e} Paris Cit\'{e} and Sorbonne Universit\'{e}, UFR de Math\'{e}matiques, Institut de Math\'{e}matiques de Jussieu-Paris Rive Gauche, UMR7586,
F-75013 Paris, France. Institut universitaire de France (IUF).}
\email{andre.belotto@imj-prg.fr}
\author{A.~Parusi\'nski}
\address{Universit\'e C\^ote d'Azur, CNRS, Labo.\ J.-A.\ Dieudonn\'e, UMR CNRS 7351, Parc Valrose, 06108 Nice Cedex 02, France}
\email{adam.parusinski@unice.fr}
\author{L.~Rifford}
\address{Universit\'e C\^ote d'Azur, CNRS, Labo.\ J.-A.\ Dieudonn\'e,  UMR CNRS 7351, Parc Valrose, 06108 Nice Cedex 02, France \& AIMS Senegal, Km 2, Route de Joal, Mbour, Senegal}
\email{ludovic.rifford@math.cnrs.fr}
\date{}
\begin{document}

\begin{abstract}
Given a smooth totally nonholonomic distribution on a smooth manifold, we construct a singular distribution capturing essential abnormal lifts which is locally generated by  vector fields with controlled divergence. Then, as an application, we prove the Sard Conjecture for rank $3$ distribution in dimension $4$ and generic distributions of corank $1$. 
\end{abstract}

\maketitle

\section{Introduction}\label{sec:Intro}

The topic of this paper is the \emph{Sard Conjecture in sub-Riemannian geometry} in the $C^{\infty}$ category and in arbitrary dimensions. This is a follow-up of our previous works \cite{bprfirst,bprminimal}, where we study the Sard Conjecture in the analytic setting, and of \cite{br18,bfpr18} where we study the Sard Conjecture in three dimensions. We rely on \cite[$\S$ 1.1, 1.2 and 1.4]{bprfirst} (and \cite{br18} in three dimensions) for a complete presentation of the Conjecture and its importance.

Throughout all the paper, we consider a smooth connected manifold $M$ of dimension $n\geq 3$ equipped with a \emph{bracket generating} distribution $\Delta$ of rank $m<n$. Whenever $\dim M >3$, all known results on the Sard Conjecture concern the analytic category  \cite{agrachev14,bprfirst,bprminimal,bv20,lmopv16,montgomery02,ov19,riffordbourbaki,lrtPreprint}. When $\mbox{dim}(M)=3$, the foundation paper of Zelenko and Zhimtomirskii \cite{zz95} proves the Sard Conjecture for \emph{generic} distributions $\Delta$ (in respect to the $\mathcal{C}^{\infty}$-Whitney topology). Belotto and Rifford have improved this result by showing that the Sard Conjecture holds true whenever the, so-called, Martinet surface is smooth\footnote{If $\Delta$ is generic, then the Martinet surface is smooth \cite{zz95}.} \cite{br18}. The proof is based on the control of the divergence of the, so-called, \emph{characteristic foliation} introduced in \cite{zz95}, and makes use of methods of analysis.

The goal of this paper is to extend to the smooth case, possibly generic, some of the main results from \cite{bprfirst,bprminimal}, all of which greatly rely on subanalytic geometry. To this end, we generalize to higher dimensions the heart of the arguments from \cite{br18}, and combine transversality theory with new methods related to Goh matrices and Pfaffian of minors. More concretely, we follow the general geometrical strategy presented in \cite[$\S$1.4]{bprfirst}, leading to two sets of results. First, we establish a geometrical framework to study the Sard Conjecture, by constructing an \emph{abnormal singular foliation}, see Theorem \ref{THM1}, which generalizes the \emph{characteristic foliation} of Zelenko and Zhimtomirskii \cite{zz95}. Crucially, this foliation admits generators with \emph{controlled divergence}, a key property remarked in \cite{br18}. Second, we establish the Sard Conjecture under qualitative properties of the abnormal singular foliation, see Theorem \ref{THM2}, which are always satisfied for corank $1$ generic distributions $\Delta$, see Corollary \ref{COR2}.

\subsection{Abnormal singular foliation} 
For convenience, we shall say that both $M$ and $\Delta$ are of class $\mathcal{C}$ with $\mathcal{C}=\mathcal{C}^{\infty}$ if they are $C^{\infty}$ and of class $\mathcal{C}=\mathcal{C}^{\omega}$ if they are analytic and we will proceed in the same way for other objects (for example, a $\mathcal{C}$-vector field will refer to a vector field in the category $\mathcal{C}$). We start by briefly introducing some of the main objects used in the paper. We rely on \cite[$\S$1.1]{bprfirst} for an extended discussion in the analytic case, and \cite{lavau} for further details on singular foliations in the $\mathcal{C}^{ \infty}$ setting.

\medskip
\noindent
\emph{Symplectic form and $\DeltaPerp$:} We denote by $\omega$ the canonical symplectic form of $T^{\ast}M$ and by $\DeltaPerp \subset T^{\ast}M$ the nonzero annihilator of $\Delta$, that is
\[
\DeltaPerp : = \Bigl\{ \pa=(x,p) \in T^*M \, \vert \, p \neq 0 \mbox{ and } p\cdot v =0, \, \forall v \in \Delta(x)\Bigr\}.
\]
We denote by $\omega^{\perp}$ the restriction of $\omega$ to $\DeltaPerp$.

\medskip
\noindent
\emph{Distribution:} A {\it distribution} on $\DeltaPerp$ is any mapping $\vec{\mathcal{K}}$ which assigns to a point $\pa$ in $\DeltaPerp  \subset T^*M$  a vector subspace $\vec{\mathcal{K}}(\pa)$ of $T_{\pa}\DeltaPerp$ of dimension $\mbox{dim} \, \vec{\mathcal{K}}(\pa)$, also called rank, that may depend upon $\pa$.

\medskip
\noindent
\emph{Singular distribution:}  We say that $\vec{\mathcal{K}}$ is {\it regular} on a set $\mathcal{S} \subset \DeltaPerp$ if its rank is constant over each connected component of $\mathcal{S}$. Otherwise we say that the distribution is \emph{singular}; note that the rank of a regular distribution $\vec{\mathcal{K}}$ on $\mathcal{S}$ may differ from one connected component of $\mathcal{S}$ to another.

\medskip
\noindent
\emph{Foliation and integrable distribution:} Recall that a \emph{singular foliation} over a smooth manifold is a partition of that manifold into connected immersed smooth submanifolds called leaves. We say that a singular distribution $\vec{\mathcal{K}}$ on $\DeltaPerp$ is \emph{integrable} if it is associated to a singular foliation, that is, if there exists a singular foliation whose tangent spaces of its leaves are equal to $\vec{\mathcal{K}}$. 

\medskip
\noindent
\emph{Invariance by dilation:} Note that we may consider a family of natural dilation over the cotangent bundle $T^{\ast}M$, that is, for every $\lambda \in \R^*$ we consider the associated dilation:
\[
\sigma_{\lambda}:T^{\ast}M \to T^{\ast}M, \quad \text{given by} \quad \sigma_{\lambda}(x,p) = (x, \lambda p).
\]
We say that a set $\mathcal{S}  \subset \DeltaPerp$ is {\it invariant by dilation} if $\sigma_{\lambda}(\mathcal{S})= \mathcal{S}$ for every $\lambda \in \mathbb{R}^{\ast}$. Note that $\DeltaPerp$ is invariant by dilation. Similarly, a distribution $\vec{\mathcal{K}}$ is {\it invariant by dilation} if $d\sigma_{\lambda}(\vec{\mathcal{K}}(\pa)) = \vec{\mathcal{K}}(\sigma_{\lambda}(\pa))$ for all $\pa$ and $\lambda$.

\medskip
\noindent
\emph{Horizontal curves with respect to $\vec{\mathcal{K}}$:} A curve $\psi:[0,1] \rightarrow \DeltaPerp$ is said to be {\it horizontal with respect to $\vec{\mathcal{K}}$} if it is absolutely continuous with derivative in $L^2$ and satisfies
\[
\dot{\psi}(t) \in \vec{\mathcal{K}}(\psi(t)) \subset T_{\psi(t)}\DeltaPerp  \quad \mbox{for a.e. } t\in [0,1]. 
\]
We are now ready to state the first main result of the paper:

\begin{theorem}[Singular distribution capturing essential abnormal lifts]\label{THM1}
Let $M$ and $\Delta$ be of class $\mathcal{C}$. Then there exist an open and dense set $\mathcal{S}_0 \subset \DeltaPerp$ and an integrable singular distribution $\vec{\mathcal{F}}$ on $\DeltaPerp$, both invariant by dilation, satisfying the following properties:
\begin{itemize}
\item[(i)]\textbf{Specification on $\mathcal{S}_0$.} $\vec{\mathcal{F}}$ is regular on $\mathcal{S}_0$ and satisfies $\vec{\mathcal{F}}_{|\mathcal{S}_0} = \mbox{\rm ker}(\omega^{\perp})_{|\mathcal{S}_0}$. In particular, there holds $\mbox{\rm dim} \, \vec{\mathcal{F}}_{|\mathcal{S}_{0}} \equiv m \mod 2$ and $\mbox{\rm dim} \, \vec{\mathcal{F}}_{|\mathcal{S}_{0}}  \leq m-2$.
\item[(ii)]\textbf{Specification outside $\mathcal{S}_0$.}  $\vec{\mathcal{F}}(\pa) = \{0\}$ for all $\pa \in \Sigma:= \DeltaPerp \setminus \mathcal{S}_0$.
\item[(iii)] \textbf{Abnormal lifts.} Let $\gamma :[0,1] \rightarrow M$ be a singular horizontal path and $\psi : [0,1] \rightarrow \DeltaPerp$ be an abnormal lift of $\gamma$. If $\psi^{-1} (\Sigma)\subset [0,1]$ has Lebesgue measure zero
(we call such an abnormal lift \emph{essential})
then $\psi$ is horizontal with respect to $\vec{\mathcal{F}}$. Furthermore, if a horizontal path $\gamma$ admits a lift $\psi : [0,1] \rightarrow \DeltaPerp$ horizontal with respect to $\vec{\mathcal{F}}$, then $\gamma$ is singular.
\item[(iv)]\textbf{Local generators of $\vec{\mathcal{F}}$.} For every point $x \in M$, there is an open neighborhood $\mathcal{V}$ of $x$ and $\mathcal{C}$-vector fields $\{\mathcal{Y}^{\alpha},\, \alpha \in \Gamma\}$, where $\Gamma$ is a finite set, defined on $\tilde{\mathcal{V}} := \DeltaPerp \cap T^{\ast} \mathcal{V}$, such that $\vec{\mathcal{F}}_{| \tilde{\mathcal{V}}} $ is generated by $\mbox{\rm Span}\{\mathcal{Y}^{\alpha},\, \alpha \in \Gamma\}$ and each $\mathcal{Y}^{\alpha}$ is singular over $\Sigma$ and  homogeneous with respect to the $p$ variable (in a local set of symplectic coordinates $(x,p)$). In addition, if $\vec{\mathcal{F}}$ has constant rank over $\mathcal{S}_0 \cap \tilde{\mathcal{V}}$ then  each $\mathcal{Y}^{\alpha}$  has \emph{controlled divergence}, that is,
\[
\mbox{\em div}^{\DeltaPerp}(\mathcal{Y}^{\alpha}) \in \mathcal{Y}^{\alpha} \cdot \mathcal{C}(\tilde{\mathcal{V}}),
\]
where $\mbox{\em div}^{\DeltaPerp}(\mathcal{Y}^{\alpha})$ stands for the divergence of the restriction of $\mathcal{Y}^{\alpha}$ to $\DeltaPerp$. Moreover, if $\vec{\mathcal{F}}$ has rank at most $1$ then $|\Gamma|=1$ and the vector field $\mathcal{Y}^{\alpha}$ generating $\vec{\mathcal{F}}$ has controlled divergence.
\item[(v)] \textbf{Generic case.} Assume that $\Delta$ is generic, that is, it contains the intersection of countably many open and dense subsets of the set of rank $m$ distributions, equipped with the Whitney $\mathcal{C}^{\infty}$ topology. Then $\Sigma$ is countably smoothly $(2n-m-1)$-rectifiable. Moreover, $\vec{\mathcal{F}}_{|\mathcal{S}_0}$ has rank $0$ if $m$ is even, and rank $1$ if $m$ is odd.
\end{itemize}
\end{theorem}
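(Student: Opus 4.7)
The plan is to work locally via the Goh matrix. Choose $\mathcal{C}$-vector fields $X_1,\ldots,X_m$ generating $\Delta$ on an open neighborhood $\mathcal{V}$ of $x$, and set $h_i(x,p) = p \cdot X_i(x)$. Then $\tilde{\mathcal{V}} = \DeltaPerp \cap T^{\ast}\mathcal{V}$ is cut out by $\{h_1=\cdots=h_m=0,\, p\neq 0\}$, and the antisymmetric Goh matrix $A(\pa) := (\{h_i,h_j\}(\pa))_{i,j=1}^m$ controls the degeneration of $\omega^{\perp}$. A direct symplectic computation shows that $\dim\ker(\omega^{\perp}|_{\pa}) = m - \mathrm{rank}\, A(\pa)$, and that this kernel is spanned (inside $T_{\pa}\DeltaPerp$) by the Hamiltonian vector fields $\sum_j \lambda^j \vec{h}_j$ with $\lambda \in \ker A(\pa)$, each of which is automatically tangent to $\DeltaPerp$.

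I would then define $\mathcal{S}_0 \subset \DeltaPerp$ as the set where $\mathrm{rank}\, A$ is locally constant (equivalently, locally maximal). This is open by lower semicontinuity and dense because the bracket-generating hypothesis forces $A\not\equiv 0$ generically. On $\mathcal{S}_0$ put $\vec{\mathcal{F}} := \ker(\omega^{\perp})$ and on $\Sigma := \DeltaPerp\setminus\mathcal{S}_0$ put $\vec{\mathcal{F}}:=\{0\}$. Invariance by dilation is inherited from the homogeneity of $h_i$ in $p$. Since $\omega^{\perp}$ is a closed 2-form of locally constant corank on $\mathcal{S}_0$, its kernel is involutive (Frobenius), giving a regular foliation there; globally we define the singular foliation by declaring the leaf through $\pa\in\Sigma$ to be $\{\pa\}$ and the leaf through $\pa\in\mathcal{S}_0$ to be the Frobenius leaf. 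The bounds $\dim\vec{\mathcal{F}}|_{\mathcal{S}_0}\equiv m\pmod 2$ and $\leq m-2$ follow from $\mathrm{rank}\, A$ being even and positive on $\mathcal{S}_0$.

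For item (iv) I would construct generators from Pfaffians of principal minors: for each subset $I\subset\{1,\ldots,m\}$ of size $\mathrm{rank}\, A+1$, the vector $\lambda^I$ with entries $\lambda^I_j = (-1)^{\sigma(j,I)}\,\mathrm{Pf}(A_{I\setminus\{j\}})$ for $j\in I$ (and zero elsewhere) lies in $\ker A$ by the Pfaffian expansion identity. Setting $\mathcal{Y}^I := \sum_j \lambda^I_j\,\vec{h}_j$ yields $\mathcal{C}$-vector fields tangent to $\DeltaPerp$, polynomial (thus homogeneous) in $p$, vanishing on $\Sigma$, and together spanning $\vec{\mathcal{F}}$ on $\tilde{\mathcal{V}}\cap\mathcal{S}_0$. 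To get controlled divergence, I would use that each $\vec{h}_j$ is Liouville divergence-free on $T^{\ast}M$ and that $\DeltaPerp$ carries a natural induced volume form (obtained as the Poincar\'e residue of $dh_1\wedge\cdots\wedge dh_m$ against the Liouville form). A computation in the spirit of \cite{br18} then reduces $\mathrm{div}^{\DeltaPerp}(\mathcal{Y}^I)$ to $\sum_j \vec{h}_j(\lambda^I_j)$ plus a correction supported by the defining functions; the Pfaffian derivation formula $\partial\,\mathrm{Pf}(B) = \tfrac{1}{2}\mathrm{Pf}(B)\,\mathrm{tr}(B^{-1}\partial B)$ (applied on the regular locus where $\mathrm{rank}\, A$ equals its local maximum) lets us rewrite this sum as $\mathcal{Y}^I(f^I)$ for an explicit $\mathcal{C}$-function $f^I$. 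When the rank of $\vec{\mathcal{F}}$ is at most $1$ only one Pfaffian vector is needed, making the identity entirely explicit.

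Items (iii) and (v) should then be essentially formal. An abnormal lift satisfies $\dot\psi(t)\in\ker(\omega^{\perp})|_{\psi(t)}$ a.e.\ by the standard Pontryagin characterization; if it is essential, then for a.e.\ $t$ we have $\psi(t)\in\mathcal{S}_0$ and the kernel coincides with $\vec{\mathcal{F}}$, so $\psi$ is $\vec{\mathcal{F}}$-horizontal. Conversely, $\vec{\mathcal{F}}$-horizontal lifts satisfy $\dot\psi\in\ker(\omega^{\perp})$ where $\vec{\mathcal{F}}\neq 0$ and $\dot\psi=0$ elsewhere, so their projections are singular. For (v), I would apply Thom transversality to the jet map $\Delta\mapsto A$: for generic $\Delta$ the loci $\{\mathrm{rank}\,A\leq 2k\}$ are smooth strata of the expected codimension in $\DeltaPerp$ (namely $\binom{m-2k}{2}$ for antisymmetric matrices), which yields smooth $(2n-m-1)$-rectifiability of $\Sigma$ and pins the generic rank of $A$ at $m$ (for $m$ even) or $m-1$ (for $m$ odd). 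The main obstacle will be the controlled-divergence identity in higher rank: the combinatorics of Pfaffian minors of non-top size is delicate, and rewriting $\mathrm{div}^{\DeltaPerp}(\mathcal{Y}^I)$ as $\mathcal{Y}^I(f^I)$ genuinely uses the constant-rank hypothesis on $\mathcal{S}_0\cap\tilde{\mathcal{V}}$ together with Jacobi-type identities among Pfaffians of overlapping minors.
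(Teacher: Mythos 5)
Your overall strategy coincides with the paper's: stratify $\DeltaPerp$ by the locus where the corank of the Goh matrix is locally constant, set $\vec{\mathcal{F}}=\ker(\omega^{\perp})$ there and $\{0\}$ on the complement, generate the kernel by Pfaffian-of-minor vectors $\sum_{j\in I}\pm\varphi(A,I\setminus\{j\})\vec{h}^j$, and use jet transversality for (v). However, two of the three hard points are left with genuine gaps. First, in the $\mathcal{C}^{\infty}$ category the locally-constant-rank locus decomposes into several open pieces $\mathcal{S}_0^1,\ldots,\mathcal{S}_0^s$ carrying different ranks $r_i$, so your prescription ``subsets $I$ of size $\mathrm{rank}\,A+1$'' is ambiguous on a neighborhood $\tilde{\mathcal{V}}$ meeting several pieces; worse, the Pfaffian vectors built for rank $r_i$ are \emph{not} in $\ker A$ at points where the rank exceeds $r_i$, so they cannot be used as global generators as written. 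The paper repairs this by multiplying the stratum-$i$ generators by smooth homogeneous functions $\Psi_i$ vanishing exactly off $\mathcal{S}_0^i$. Second, your divergence computation does not close: the formula $\partial\,\mathrm{Pf}(B)=\tfrac12\mathrm{Pf}(B)\,\mathrm{tr}(B^{-1}\partial B)$ gives $\sum_j \vec{h}_j(\lambda^I_j)=\sum_j \lambda^I_j\,g_j$ with coefficients $g_j$ depending on $j$, which is not of the form $\mathcal{Y}^I(f^I)$. What actually works is the polynomial identity of Proposition \ref{PROPPfaffian}(ii), expressing $X[\varphi(A,I)]$ through Pfaffians of codimension-two minors without any invertibility assumption; combined with the cyclic symmetry of the signs and the Poisson--Jacobi identity, the divergence of $\mathcal{Y}^I$ on the ambient $T^*M$ (flat metric) vanishes \emph{identically}, and the restriction to $\DeltaPerp$ is then handled by induction on the sets $\{h^1=\cdots=h^{\ell}=0\}$, using that each $h^i$ is a first integral of $\mathcal{Y}^I$ (Lemma \ref{CLAIMdivergence}). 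You correctly flag this as the main obstacle, but the route you sketch would not deliver the controlled-divergence property.

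For (v), your expected-codimension count for the rank strata of antisymmetric matrices ignores that the relevant rank is that of the Goh matrix \emph{modulo the ideal} $(h^1,\ldots,h^m)$, i.e.\ restricted to $\DeltaPerp$, which itself moves with $\Delta$; and it does not verify that perturbations of $\Delta$ realize transversality of the induced map. The paper instead proves (Proposition \ref{prop:TechnicalGenericity}, via an explicit submersion in suitably normalized Taylor coefficients) that the set of $d$-jets of $(X^1,\ldots,X^m)$ for which the formal Pfaffian vanishes identically has codimension $n+1$ in the jet space, so the $n$-dimensional jet extension generically avoids it, and then deduces countable smooth $(2n-m-1)$-rectifiability of $\Sigma$ from the Malgrange preparation theorem (Lemma \ref{lem:RectifiableFormal}) rather than from smoothness of rank strata. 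Items (i)--(iii) of your proposal are essentially the paper's argument (minor quibble: density of $\mathcal{S}_0$ follows from semicontinuity alone; bracket-generation is what forces $\mathrm{rank}\,A\geq 2$ on $\mathcal{S}_0$ and hence $\dim\vec{\mathcal{F}}\leq m-2$).
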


This result should be seen as the $\mathcal{C}^{\infty}$ analogue of \cite[Theorem 1.1]{bprfirst}, which covers the analytic category. At the one hand, Theorem \ref{THM1} characterize only essential abnormal lifts, while \cite[Theorem 1.1]{bprfirst} characterize all abnormal lifts. In fact, subanalytic geometry and Whitney stratification are key to be able to describe the behavior of the abnormal lifts over the singular set $\Sigma := \DeltaPerp\setminus \mathcal{S}_0$ in \cite{bprfirst}, and these methods are unavailable in the smooth category. On the other hand, Theorem \ref{THM1}(iv) provides a new property, even in the analytic case, much in the spirit of \cite{br18}. This property may be used to study the Sard Conjecture as presented in section \ref{sec:IntroSard} below. Crucially, under additional hypothesis, it allows us to bypass the analytic method of \emph{Witness transverse sections} developed in \cite{bprminimal}. Witness transverse sections exist for arbitrary subanalytic foliations \cite[$\S$3]{bprminimal} but, unfortunately, they may not exist in the smooth category.

Finally, the proof of Theorem \ref{THM1} (v) follows from transversality arguments, combined with methods related to \emph{Goh matrices} and \emph{Pfaffian of minors} developed in Section \ref{SECPrelim}. The generic property stated in assertion (v) roughly corresponds to the property which is required to obtain Corollaries \ref{COR2} and \ref{COR3} below. But in fact much deeper results can be established for generic distributions, as for example the Chitour-Jean-Trélat Theorem \cite{cjt06} stating that all abnormal lifts of generic distributions are essential. This subject will be investigated more deeply in a forthcoming work.

\begin{remark}
Let us briefly comment on two technical improvements of Theorem \ref{THM1}. First, according to Theorem \ref{THM1} (v), the singular set $\Sigma := \DeltaPerp\setminus \mathcal{S}_0$ of a generic distribution is countably smoothly $(2n-m-1)$-rectifiable, which means that it can be covered by countably many smooth submanifolds of $\DeltaPerp$ of codimension $1$. This result can be improved in the analytic category where one can show that $\Sigma$ is a proper analytic subset of $\DeltaPerp$, see \cite[Th 1.1]{bprfirst}. In either way, $\Sigma$ has Lebesgue measure zero in $\DeltaPerp$. Second, concerning Theorem \ref{THM1} (iv), we can show that, if we allow the set $\Gamma$ to be countable, then the $\mathcal{C}$-module of vector fields generated by $\{\mathcal{Y}^{\alpha},\, \alpha \in \Gamma\}$ is involutive, that is, it is stable by Lie-brackets. Moreover, in the case $\mathcal{C}=\mathcal{C}^{\omega}$, $\Gamma$ may always be taken to be finite. We refer the reader to Remark \ref{rk:InvolutiveDistribution} for further detail.
\end{remark}

\subsection{Applications to the Sard Conjecture}\label{sec:IntroSard}

The property of controlled divergence for generators of the singular distribution given in Theorem \ref{THM1} (iv) was observed and explored in a previous work of Belotto and Rifford in the case $\mbox{dim}(M)=3$ where it could be used to prove the strong Sard Conjecture whenever the Martinet surface is smooth (see \cite[Theorem 1.1]{br18}).  Here we use it to establish the Sard Conjecture for corank $1$ distributions for which the singular distribution $\vec{\mathcal{F}}$ given by Theorem \ref{THM1} satisfies an extra assumption. We recall that a totally nonholonomic distribution $\Delta$ of rank $m<n$ on $M$ is said to satisfy the \emph{Sard Conjecture} if for any $x\in M$, the set of end-points of singular horizontal paths starting from $x$, denoted by $\mbox{Abn}_{\Delta} (x)$, has Lebesgue measure zero in $M$ (we refer to \cite[$\S$1.2]{bprfirst} for an extended presentation). We have the following:

\begin{theorem}[Conditional Sard Conjecture for corank $1$ distributions]\label{THM2}
Let $M$ and $\Delta$ be of class $\mathcal{C}$ and assume that $\Delta$ is of corank $1$. Assume that the two following properties are satisfied:
\begin{itemize}
\item[(H1)] The distribution $\vec{\mathcal{F}}_{\vert \mathcal{S}_0}$ has constant rank equal to $0$ or $1$.
\item[(H2)] The singular set $\Sigma$ of $\vec{\mathcal{F}}$ has Lebesgue measure zero in $\DeltaPerp$.
\end{itemize}
Then the Sard Conjecture holds true.
\end{theorem}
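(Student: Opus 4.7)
The plan is to extract from Theorem \ref{THM1} a single-generator picture for $\vec{\mathcal{F}}$ and adapt the divergence-based strategy of \cite{br18} from dimension three to the higher-dimensional corank $1$ setting. Fix $x \in M$; the goal is to show that $\mbox{Abn}_\Delta(x)$ has Lebesgue measure zero in $M$. Each singular horizontal path $\gamma$ starting at $x$ admits at least one abnormal lift $\psi : [0,1] \to \DeltaPerp$ with $\psi(0) \in \DeltaPerp(x)$, and I would partition the endpoints $\gamma(1)$ into three groups according to whether $\gamma(1) \in \pi(\Sigma)$, or $\psi(1) \in \mathcal{S}_0$ with $\psi$ essential, or $\psi(1) \in \mathcal{S}_0$ with $\psi$ non-essential.

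For the first two groups, the argument is relatively short. Since $\Sigma$ is invariant by dilation and the fibers of $\DeltaPerp$ are $1$-dimensional (corank $1$), a Fubini argument in the local trivialization $\DeltaPerp \simeq M \times \mathbb{R}$ shows that H2 is in fact equivalent to $\pi(\Sigma)$ having Lebesgue measure zero in $M$. For essential lifts, Theorem \ref{THM1}(iii) forces $\psi$ to be horizontal with respect to $\vec{\mathcal{F}}$: in the rank $0$ case of H1, $\dot\psi = 0$ almost everywhere, so $\psi$ is constant and $\gamma(1) = x$; in the rank $1$ case, Theorem \ref{THM1}(iv) gives a single generator $\mathcal{Y}$ of $\vec{\mathcal{F}}$ with controlled divergence, and essential lifts from $x$ are reparameterized integral curves of $\mathcal{Y}$ with initial condition in the $1$-dimensional fiber $\DeltaPerp(x)$. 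The homogeneity of $\mathcal{Y}$ in $p$ combined with the dilation invariance of $\vec{\mathcal{F}}$ collapses these to at most two integral curves in $M$ up to time reparameterization and orientation, so the set of endpoints lies in a $1$-dimensional subset of $M$ and is Lebesgue-null (since $n \geq 3$).

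The main obstacle is the non-essential lifts, since Theorem \ref{THM1}(iii) gives no direct control over $\psi$ on the positive-measure time-set $\psi^{-1}(\Sigma)$. Here the key tool is the controlled divergence identity $\mbox{div}^{\DeltaPerp}(\mathcal{Y}) = \mathcal{Y} \cdot h$ from Theorem \ref{THM1}(iv), which implies that the flow $\phi_t^{\mathcal{Y}}$ preserves the weighted measure $e^{-h}\, d\mathrm{vol}_{\DeltaPerp}$ along its orbits. Following the spirit of \cite{br18}, I would combine this weighted measure preservation with H2 to prove that, up to a Lebesgue-null exceptional set of initial conditions in $\DeltaPerp(x)$, any abnormal lift touching $\Sigma$ must carry a trace of its null-measure all the way to its endpoint, landing in $\pi(\Sigma)$; the weighted Jacobian estimate is what allows Fubini to propagate the measure-null property of $\Sigma$ through the flow. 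The hardest part is to carry out this quantitative estimate purely in the $\mathcal{C}^{\infty}$ category, without the subanalytic stratification tools of \cite{bprfirst,bprminimal}, which will likely require a careful covering of $\Sigma$ by tubular neighborhoods adapted to $\vec{\mathcal{F}}$ and a direct application of the weighted change-of-variables formula for $\phi_t^{\mathcal{Y}}$.
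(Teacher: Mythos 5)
Your reduction of (H2) to $\sigma=\pi(\Sigma)$ having Lebesgue measure zero, and your treatment of the rank-$0$ case, are fine, but the core of the proposal has a genuine gap. In the rank-$1$ case you assert that essential lifts over $x$ are reparameterized integral curves of the single generator $\mathcal{Y}$ and therefore land in a one-dimensional subset of $M$. This is false: a curve horizontal with respect to $\vec{\mathcal{F}}$ only satisfies $\dot\psi(t)\in\vec{\mathcal{F}}(\psi(t))$ almost everywhere, and $\mathcal{Y}$ \emph{vanishes} on $\Sigma$. An essential lift may reach $\Sigma$ in finite time (while spending a null set of times there) and then leave along a \emph{different} orbit of $\mathcal{Y}$; iterating this, the set reachable from $x$ is a priori a union of orbits accumulating on $\Sigma$, not a single leaf. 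This branching through the singular set is precisely the difficulty the theorem addresses and cannot be dismissed. Moreover, a non-essential endpoint not lying in $\sigma$ reduces to the same picture (after its last exit from $\sigma$, the terminal arc of the projected path is again an orbit of the projected line field issued from a point of $\sigma$), so your three-way partition really collapses to: endpoints in $\sigma$, versus endpoints on orbits of the line field emanating from $x$ or accumulating on $\sigma$.

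The paper's actual argument is a global contradiction carried out downstairs on $M$ with the projected line field $\mathcal{Z}=d\pi(\mathcal{Y})$ (Theorem~\ref{thm:CoRank1Foliation} makes all objects live on $M$ in corank $1$). If $\mbox{Abn}_{\Delta}(x)$ had positive measure, Lemma~\ref{LEMxbar1} (taken from \cite{br18}) produces a point $\bar{x}\in\sigma$, a closed set $S_0$ of positive measure and a null set $S_\infty\subset\sigma$ such that every $z\in S_0$ carries a half-orbit $\omega_z$ of the line foliation with $\mathcal{H}^1(\omega_z)\le 1$ converging into $S_\infty$. Flowing $S_0$ by $\mathcal{Z}$ and using controlled divergence in the quantitative form $|\mbox{div}(\mathcal{Z})|\le K\,|\mathcal{Z}|$ together with the uniform length bound gives $\mbox{vol}(S_t)\ge e^{-KC}\,\mbox{vol}(S_0)>0$ for all $t$, while $\mbox{vol}(S_t)\to 0$ because $S_t$ accumulates on $S_\infty$, which is null by (H2). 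Your last paragraph gestures at this mechanism, but it never formulates the extraction of $S_0$, $S_\infty$ and the uniform bound on the lengths of the half-orbits, which is exactly what turns the divergence control into a lower volume bound; without that step the proposal does not close.
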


Our first application of Theorem \ref{THM2} is concerned with the Sard Conjecture for corank-$1$ distributions in dimension $4$ for which assumptions (H1)-(H2) are automatically satisfied (see Section \ref{SECEx2}):

\begin{corollary}[Sard Conjecture for rank-$3$ distribution in dimension $4$]\label{COR1}
Suppose that $M$ is of dimension $4$ and $\Delta$ is of rank $3$. Then the Sard Conjecture holds true.
\end{corollary}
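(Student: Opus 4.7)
The plan is to apply Theorem \ref{THM2} directly, so it suffices to verify hypotheses (H1) and (H2) for every rank-$3$ bracket-generating distribution $\Delta$ on a $4$-manifold $M$. Hypothesis (H1) is immediate from Theorem \ref{THM1}(i): with $m=3$, the parity condition $\dim \vec{\mathcal{F}}_{|\mathcal{S}_0} \equiv m \pmod{2}$ combined with the bound $\dim \vec{\mathcal{F}}_{|\mathcal{S}_0} \leq m-2 = 1$ forces this rank to be identically equal to $1$ on every connected component of $\mathcal{S}_0$.

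For (H2), I would identify $\Sigma$ concretely using the corank-$1$ structure. Locally write $\Delta = \ker\alpha$ for a non-vanishing $1$-form $\alpha$; then $\DeltaPerp$ is parametrized by $\phi\colon(x,t) \mapsto (x,t\alpha(x))$ for $t \in \R^*$, and pulling back the Liouville form gives $\phi^*\lambda = t\alpha$, hence $\omega^{\perp} = dt \wedge \alpha + t\,d\alpha$ and
\[
(\omega^{\perp})^2 = 2t\, dt \wedge \alpha \wedge d\alpha + t^2 (d\alpha)^2.
\]
Since $\alpha \wedge d\alpha = 0$ locally forces $d\alpha = \alpha \wedge \beta$ and hence $(d\alpha)^2 = 0$, both summands vanish together. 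As the $2$-form $\omega^{\perp}$ on the $5$-manifold $\DeltaPerp$ has rank at most $2$ precisely where $(\omega^{\perp})^2 = 0$, this identifies $\Sigma = \pi^{-1}(A)$, where $A := \{x \in M : d\alpha_{|\Delta(x)} = 0\}$ is the Martinet-type locus. Since $\pi\colon \DeltaPerp \to M$ has one-dimensional fibres, by Fubini (H2) reduces to showing that $A$ has Lebesgue measure zero in $M$.

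This last step is the real obstacle, because in the $\mathcal{C}^{\infty}$ category the zero set of a smooth function may have positive measure, so one must genuinely exploit bracket generating. I would argue by contradiction: if $A$ had positive measure, pick a Lebesgue density point $x_0 \in A$. A standard density-point argument (Taylor expansion, rescaling, and the fact that a nonzero polynomial cannot vanish on a positive-density subset of a ball) shows that every smooth function vanishing on $A$ vanishes to infinite order at $x_0$. Starting from $\alpha([X,Y]) = -d\alpha(X,Y) \equiv 0$ on $A$ for $X,Y \in \Gamma(\Delta)$, an induction on bracket depth via the identity $\alpha([X,Z]) = X\!\cdot\!\alpha(Z) - d\alpha(X,Z)$, with the residual term $d\alpha(X,Z)$ controlled by decomposing $Z = Z_\Delta + \alpha(Z)\tilde{Z}$ along $\Delta \oplus \R\tilde{Z}$ with $\alpha(\tilde{Z})=1$ and using $d\alpha_{|\Delta}=0$ on $A$, shows that $\alpha(Z)$ vanishes to infinite order at $x_0$ for every iterated Lie bracket $Z$ of sections of $\Delta$. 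In particular every such $Z$ satisfies $Z(x_0) \in \ker\alpha(x_0) = \Delta(x_0)$, which contradicts the bracket-generating property of $\Delta$ at $x_0$. Therefore $A$ has measure zero, (H2) is verified, and Theorem \ref{THM2} yields the Sard Conjecture.
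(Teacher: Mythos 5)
Your proposal is correct, and its skeleton coincides with the paper's: both reduce Corollary \ref{COR1} to Theorem \ref{THM2}, obtain (H1) from the parity constraint $\dim\vec{\mathcal{F}}_{|\mathcal{S}_0}\equiv 3\ (2)$ together with $\dim\vec{\mathcal{F}}_{|\mathcal{S}_0}\le 1$ (which also forces $s=1$, so $\Sigma$ is exactly the Martinet-type locus you identify), and obtain (H2) by showing that bracket generation forbids that locus from having positive measure. The difference lies in how that last, genuinely delicate step is executed. The paper works with an explicit frame $X^i=\partial_{x_i}+A_i\partial_{x_4}$, identifies $\sigma$ as the common zero set of the three functions $[X^i,X^j](x_4)$ (the entries of your $d\alpha_{|\Delta}$), observes that nonholonomicity forces at least one of these to have a nonzero Taylor expansion at every point, and then invokes Lemma \ref{lem:RectifiableFormal} (Malgrange preparation) to conclude that $\sigma$ is countably smoothly $(n-1)$-rectifiable --- a strictly stronger conclusion than measure zero, consistent with Theorem \ref{THM1}(v). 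Your route replaces this with a Lebesgue density-point argument plus an induction on bracket depth showing that all iterated brackets would stay in $\Delta$ to infinite order at a density point of $A$; this is more elementary and self-contained (no preparation theorem), but yields only measure zero, which is all (H2) requires. A minor further difference: the paper also records the explicit generator $\mathcal{Z}$ of $\mathcal{H}$ and checks its controlled divergence via the Jacobi identity, but for the corollary itself this is already supplied by Theorem \ref{thm:CoRank1Foliation}(iv), so your omission of it is harmless.
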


By Theorem \ref{THM1} (v), the singular distribution of a generic distribution $\Delta$ has constant rank $0$ or $1$ on $\mathcal{S}_0$ whose complement in $\DeltaPerp$ has Lebesgue measure zero. Therefore, we have the following:

\begin{corollary}[Sard Conjecture for corank-$1$ generic distributions]\label{COR2}
Let $M$ and $\Delta$ be $\mathcal{C}^{\infty}$ with $\Delta$ of corank $1$. If $\Delta$ is generic (that is, it contains the intersection of countably many open and dense subsets of the set of rank $m$ distributions, equipped with the Whitney $\mathcal{C}^{\infty}$ topology) then the Sard Conjecture holds true.
\end{corollary}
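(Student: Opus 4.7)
The plan is to derive Corollary \ref{COR2} as an immediate application of Theorem \ref{THM2}, once the two standing hypotheses (H1) and (H2) of that theorem are read off from the generic statement of Theorem \ref{THM1}(v). Structurally the argument is a short combination of the two main results of the paper; the real content lies in those results themselves.

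First, I would unpack the dimensions. Since $\Delta$ has corank $1$, one has $m=n-1$ and $\dim \DeltaPerp = 2n-m = n+1$. Theorem \ref{THM1}(v) asserts that, for generic $\Delta$, the singular set $\Sigma := \DeltaPerp \setminus \mathcal{S}_0$ is countably smoothly $(2n-m-1)$-rectifiable, i.e.\ covered by countably many smooth submanifolds of $\DeltaPerp$ of dimension $n$, hence of codimension $1$. Each such submanifold has Lebesgue measure zero with respect to any smooth volume form on the $(n+1)$-dimensional manifold $\DeltaPerp$, and a countable union of null sets is null. This establishes hypothesis (H2).

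Second, Theorem \ref{THM1}(v) states directly that $\vec{\mathcal{F}}_{|\mathcal{S}_0}$ has rank $0$ if $m$ is even and rank $1$ if $m$ is odd. In both cases the rank is constant and equal to $0$ or $1$, which is precisely hypothesis (H1). With (H1) and (H2) in hand, Theorem \ref{THM2} applies verbatim and produces the Sard Conjecture for $\Delta$, completing the proof.

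There is no genuine obstacle at the level of the corollary itself: the work has already been done upstream. The truly hard inputs are Theorem \ref{THM1}(v), whose proof combines Thom-type transversality in jet spaces of distributions with the Goh matrix and Pfaffian-of-minors analysis developed in Section \ref{SECPrelim} to control the codimension of $\Sigma$ and pin down the parity-dependent rank of $\ker(\omega^{\perp})$, and Theorem \ref{THM2}, whose proof exploits the controlled-divergence local generators furnished by Theorem \ref{THM1}(iv). If I were writing the argument for Corollary \ref{COR2} from scratch, the only point I would double-check carefully is that the $(2n-m-1)$-rectifiability of $\Sigma$ in Theorem \ref{THM1}(v) is measured inside $\DeltaPerp$ (so that codimension $1$ really means Lebesgue-null there); everything else is a direct citation.
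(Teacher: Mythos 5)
Your proposal is correct and matches the paper's own argument: the paper derives Corollary \ref{COR2} exactly by reading off (H1) from the constant rank $0$ or $1$ of $\vec{\mathcal{F}}_{|\mathcal{S}_0}$ and (H2) from the countable smooth $(2n-m-1)$-rectifiability of $\Sigma$ in Theorem \ref{THM1}(v), and then applying Theorem \ref{THM2}. Your dimension check that rectifiability by codimension-$1$ submanifolds of the $(n+1)$-dimensional manifold $\DeltaPerp$ implies Lebesgue measure zero is the right (and only) point needing verification, and it is sound.
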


As a last application, we note that Theorem \ref{THM2} can be combined with \cite[Th 2.4]{cjt06} (showing that distributions of corank $>1$  satisfy the minimal rank Sard Conjecture) to establish the \emph{minimal rank Sard Conjecture} for generic distribution (we refer the reader to \cite[$\S$1.4]{bprfirst} for the statement of the Conjecture, as well as for a discussion of its importance towards a geometrical approach to the Sard Conjecture):

\begin{corollary}[Generic minimal rank Sard Conjecture]\label{COR3} If $\Delta$ is generic (that is, it contains the intersection of countably many open and dense subsets  of the set of rank $m$ distributions, equipped with the Whitney $\mathcal{C}^{\infty}$ topology) then the minimal rank Sard Conjecture holds true. 
\end{corollary}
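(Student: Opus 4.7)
The plan is to reduce the statement to the two results mentioned immediately before it by a dichotomy on the corank $n-m$ of the generic distribution $\Delta$. Since genericity is only needed to activate Corollary \ref{COR2}, I would organize the argument so that the corank-$1$ case is handled by our new results and the higher-corank case is handled by the unconditional Chitour--Jean--Trélat result.

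First, suppose $\Delta$ has corank strictly greater than $1$. Then \cite[Th.~2.4]{cjt06} asserts that the minimal rank Sard Conjecture holds for \emph{every} totally nonholonomic distribution of corank at least $2$, with no genericity assumption. Hence in this range there is nothing further to prove, and genericity of $\Delta$ is used only to guarantee that $\Delta$ is bracket generating in the usual working hypothesis of the paper.

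Now suppose $\Delta$ has corank $1$. Then Corollary \ref{COR2}, applied to the generic distribution $\Delta$, yields the full Sard Conjecture: for every $x\in M$ the set $\mbox{Abn}_{\Delta}(x)$ of endpoints of singular horizontal paths starting at $x$ has Lebesgue measure zero in $M$. Since any singular horizontal path of minimal rank is in particular a singular horizontal path, the set of endpoints of minimal rank abnormals from $x$ is contained in $\mbox{Abn}_{\Delta}(x)$ and therefore also has Lebesgue measure zero. This is exactly the conclusion of the minimal rank Sard Conjecture in the corank-$1$ case. Combining the two cases covers every generic $\Delta$ on $M$, which finishes the proof.

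There is essentially no technical obstacle here: the corollary is a packaging result, and the only point deserving care is the trivial observation that the full Sard Conjecture implies its minimal rank variant, so that Corollary \ref{COR2} suffices in the corank-$1$ range where \cite[Th.~2.4]{cjt06} does not directly apply.
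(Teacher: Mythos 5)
Your proposal is correct and matches the paper's intended argument exactly: the paper derives Corollary \ref{COR3} precisely by combining \cite[Th 2.4]{cjt06} for corank $>1$ with Theorem \ref{THM2} (via Corollary \ref{COR2} and Theorem \ref{THM1}(v)) for corank $1$, together with the observation that the full Sard Conjecture implies its minimal rank variant. Nothing is missing.
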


\subsection{Paper structure} The paper is organized as follows: Several examples illustrating our results for corank $1$ distributions are presented in Section \ref{SECexamples}; Section \ref{SECPrelim} gathers a few results of importance for the rest of the paper; Sections \ref{SECProofTHM1} and \ref{SECProofTHM2} are devoted respectively to the proofs of Theorems \ref{THM1} and \ref{THM2} and finally,  Appendices \ref{SECAPP1} and \ref{SECAPP2} provide the proofs of several results stated in the course of the paper.

\medskip
\noindent
\textbf{Acknowledgment:} We thank the anonymous referees for their valuable suggestions, which helped improve the manuscript. The first author is supported by the project ``Plan d’investissements France 2030", IDEX UP ANR-18-IDEX-0001, and partially supported by the Agence Nationale de la Recherche (ANR), project ANR-22-CE40-0014.

\section{The corank $1$ case}\label{SECexamples}

We gather in this section several examples to illustrate our results. We show in Section \ref{SECEx1} that Theorem \ref{THM1} provides indeed a singular distribution on $M$ in the case of corank $1$ distributions, Sections \ref{SECEx2}, \ref{SECEx3} are concerned with Sard type results results concerning corank $1$ distributions in dimensions $4$ and $5$, and Section \ref{ex:BadSigma} features an example of corank $1$ distribution in dimension $6$ for which the singular set has positive Lebesgue measure. 

\subsection{Corank $1$ distributions}\label{SECEx1}

In the case of a corank $1$ distribution, the nonzero annihilator $\DeltaPerp \subset T^*M$ is a graph (up to dilation) over $M$, in such a way that all objects given by Theorem \ref{THM1} can indeed be seen in $M$. The proof of the following result is given in Appendix \ref{SECAPP1} ($\pi$ stands for the canonical projection from $T^*M$ to $M$):

\begin{theorem}[Singular distribution for corank 1 distributions]\label{thm:CoRank1Foliation} 
Let $M$ and $\Delta$ be of class $\mathcal{C}$ with $\Delta$ of corank $1$ (that is, $m= n-1$) and consider $\mathcal{S}_0, \vec{\mathcal{F}}$ and $\Sigma$ given by Theorem \ref{THM1}. Then the open and dense set $\mathcal{R}_0 \subset M$ and the integrable singular distribution $\mathcal{H}$ over $M$ given by 
\[
\mathcal{R}_0:=\pi(\mathcal{S}_0) \quad \mbox{and} \quad \mathcal{H} := d\pi \bigl(\vec{\mathcal{F}}\bigr)
\]
satisfy the following properties:
\begin{itemize}
\item[(i)]\textbf{Specification on $\mathcal{R}_0$.} $\mathcal{H}$ is regular on $\mathcal{R}_0$, $\mbox{\rm dim} \, \mathcal{H}_{|\mathcal{R}_{0}} \equiv m \, (2)$ and $\mbox{\rm dim} \, \mathcal{H}_{|\mathcal{R}_{0}}  \leq m-2$.
\item[(ii)]\textbf{Specification outside $\mathcal{R}_0$.}  $\mathcal{H}(x) = \{0\}$ for all $x \in \sigma:= M \setminus \mathcal{R}_0=\pi(\Sigma)$.
\item[(iii)] \textbf{Singular horizontal paths.} Let $\gamma :[0,1] \rightarrow M$ be an horizontal path. If $\gamma$ is singular and $\gamma^{-1}(\sigma)$ has Lebesgue measure zero then $\gamma$ is horizontal with respect to $\mathcal{H}$. Conversely, if $\gamma$ is horizontal with respect to $\mathcal{H}$, then it is singular.

\item[(iv)] \textbf{Local generators of $\mathcal{H}$.} For every point $x \in M$, there is an open neighborhood $\mathcal{V}$ of $x$, $d\in \mathbb{N}$, and $\mathcal{C}$-vector fields $\mathcal{Z}^1,\ldots, \mathcal{Z}^{d}$ defined on $\mathcal{V}$, such that $\mathcal{H}_{|\mathcal{V}} $ is generated by $\mbox{\rm Span}\{\mathcal{Z}^{1}, \ldots, \mathcal{Z}^d\}$, and each $\mathcal{Z}^i$ is singular over $\sigma = M \setminus \mathcal{R}_0$ and, if $\mathcal{H}$ has constant rank over $\mathcal{V} \cap \mathcal{R}_0$, then $\mathcal{Z}^i$ has \emph{controlled divergence}, that is,
\[
\mbox{\em div}(\mathcal{Z}^k) \in \mathcal{Z}^k \cdot \mathcal{C}(\mathcal{V}).
\]
Moreover, if $\mathcal{H}$ has rank at most $1$, then $d=1$.
\item[(v)] \textbf{Generic case.} Assume that $\Delta$ is generic (in respect to the Whitney $\mathcal{C}^{\infty}$ topology). Then $\sigma$ is countably smoothly $(n-1)$-rectifiable. Moreover, $\mathcal{H}_{|\mathcal{R}_0}$ has rank $0$ if $m$ is even, and rank $1$ if $m$ is odd.

\end{itemize}

 \end{theorem}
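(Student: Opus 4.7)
The plan is to exploit the fact that, in corank $1$, a local choice of $1$-form $\alpha$ with $\ker\alpha = \Delta$ trivializes $\DeltaPerp$ over a neighborhood $\mathcal{V} \subset M$ as $\mathcal{V} \times \R^*$ via $(x,\mu) \mapsto (x, \mu\alpha(x))$, so that $\pi : \DeltaPerp \to M$ is a trivial $\R^*$-bundle whose fibers are exactly the $\R^*$-orbits of $\sigma_\mu$. Because $\vec{\mathcal{F}}$, $\mathcal{S}_0$ and $\Sigma$ are all dilation-invariant, each fiber of $\pi$ is contained either in $\mathcal{S}_0$ or in $\Sigma$, so $\mathcal{R}_0 = \pi(\mathcal{S}_0)$ and $\sigma = \pi(\Sigma)$ partition $M$, and $\mathcal{R}_0$ is open and dense (as $\pi$ is an open submersion). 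The identity $\pi \circ \sigma_\mu = \pi$, together with $d\sigma_\mu(\vec{\mathcal{F}}(\pa)) = \vec{\mathcal{F}}(\sigma_\mu(\pa))$, shows that $\mathcal{H}(x) := d\pi(\vec{\mathcal{F}}(\pa))$ is independent of $\pa \in \pi^{-1}(x)$; assertion (ii) is immediate from $\vec{\mathcal{F}}|_\Sigma \equiv 0$.

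The key local ingredient, used throughout (i)-(iii) and (v), is that the Euler field $E = p\cdot\partial_p$, which spans $\ker(d\pi|_{T\DeltaPerp})$, never lies in $\vec{\mathcal{F}}$ on $\mathcal{S}_0$. Indeed, on $\mathcal{S}_0$ one has $\vec{\mathcal{F}} = \ker\omega^\perp$ and $\omega^\perp(E, v) = \lambda(v) = p\cdot d\pi(v)$ for $v \in T_{(x,p)}\DeltaPerp$; since $d\pi$ surjects onto $T_xM$ and $p \ne 0$, this form is nowhere zero, so $E \notin \ker\omega^\perp$ and $d\pi$ restricts to a fiberwise isomorphism $\vec{\mathcal{F}}(\pa) \xrightarrow{\sim} \mathcal{H}(\pi(\pa))$. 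The rank and parity statements in (i), as well as the rank statements in (v), transfer at once from Theorem \ref{THM1}; integrability of $\mathcal{H}$ on $\mathcal{R}_0$ is obtained as the $\R^*$-quotient of the integrability of $\vec{\mathcal{F}}$, completed by singleton leaves over $\sigma$. For (iii), the identity $\pi^{-1}(\sigma) = \Sigma$ yields $\psi^{-1}(\Sigma) = \gamma^{-1}(\sigma)$ for any abnormal lift $\psi$ of $\gamma$; if $\gamma^{-1}(\sigma)$ is Lebesgue null then $\psi$ is essential and Theorem \ref{THM1}(iii) gives $\dot\gamma = d\pi(\dot\psi) \in \mathcal{H}(\gamma)$. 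Conversely, an $\mathcal{H}$-horizontal $\gamma$ lifts to a $\vec{\mathcal{F}}$-horizontal $\psi$ by choosing any $\psi(0) \in \pi^{-1}(\gamma(0))$ and pathwise inverting the isomorphism $d\pi|_{\vec{\mathcal{F}}}$ (a measurable selection suffices for the $L^2$-regularity), and Theorem \ref{THM1}(iii) then renders $\gamma$ singular. The $(n-1)$-rectifiability of $\sigma$ in (v) follows from $\Sigma$ being $\R^*$-invariant of rectifiable dimension $2n-m-1=n$ with $1$-dimensional orbits.

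The main obstacle is (iv). I would set $\mathcal{Z}^k := d\pi(\mathcal{Y}^\alpha \circ s)$ on $\mathcal{V}$, where $s(x) = (x, \alpha(x))$ is the fixed section; the homogeneity of $\mathcal{Y}^\alpha$ in $p$ ensures that rescaling $\alpha$ only rescales each $\mathcal{Z}^k$ by a nowhere-vanishing smooth function, so $\mathrm{Span}\{\mathcal{Z}^k\} = \mathcal{H}|_\mathcal{V}$ is intrinsic, and singularity over $\sigma$ descends from singularity of $\mathcal{Y}^\alpha$ over $\Sigma$. The delicate step is controlled divergence: working in the trivialization $\mathcal{V}\times\R^*$ with the product volume form $d\mu \wedge dx$, I would decompose $\mathcal{Y}^\alpha = \tilde{\mathcal{Z}}^k + f\,E$ into its projecting and Euler parts, use homogeneity in $\mu$ to express $\mbox{div}^{\DeltaPerp}(\mathcal{Y}^\alpha)$ as $\mbox{div}(\mathcal{Z}^k)$ plus a correction along $E$, and then absorb the correction, transferring the membership in $\mathcal{Y}^\alpha \cdot \mathcal{C}(\tilde{\mathcal{V}})$ to the corresponding membership in $\mathcal{Z}^k \cdot \mathcal{C}(\mathcal{V})$. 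The rank-$\leq 1$ statement ($d=1$) is then a direct consequence of the fiberwise isomorphism $d\pi|_{\vec{\mathcal{F}}}$.
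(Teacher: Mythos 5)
Your overall route is the paper's: you deduce (i)--(iii) and (v) from Theorem \ref{THM1} using that, in corank $1$, the fibres of $\pi:\DeltaPerp\to M$ are exactly the dilation orbits (so $\mathcal S_0$ and $\Sigma$ are saturated, $\mathcal H(x)$ is well defined, and $\psi^{-1}(\Sigma)=\gamma^{-1}(\sigma)$ for any lift), and for (iv) you push the generators $\mathcal Y^{\alpha}$ through a section and transfer controlled divergence via homogeneity in the fibre variable --- which is precisely the computation $\mathcal Y_I=p_n^{r}(\mathcal Z_I+\tilde{\mathcal Z}_I)$, $\mbox{div}(\mathcal Z_I)=\mbox{div}(p_n^{-r}\mathcal Y_I)-\mathcal Y_I(p_n)/p_n^{r+1}$ carried out in Appendix \ref{SECAPP1}. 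Your explicit remark that the Euler field $E$ satisfies $i_E\omega=\lambda$, hence never lies in $\ker\omega^{\perp}$ over $\mathcal S_0$, is a clean justification of the fibrewise isomorphism $d\pi|_{\vec{\mathcal F}}$ that the paper leaves implicit.

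Three steps are glossed in a way that needs repair. First, in the converse of (iii), ``pathwise inverting $d\pi|_{\vec{\mathcal F}}$ by a measurable selection'' does not produce a lift: the preimage of $\dot\gamma(t)$ in $\vec{\mathcal F}(\psi(t))$ depends on the unknown fibre coordinate of $\psi(t)$, so the lift is the solution of an ODE in that coordinate (linear, of the form $\dot\mu=\mu\,c(t)$ with $c\in L^1$ by homogeneity and $\dot\gamma\in L^2$), not a pointwise choice. Second, in (iv) the ``absorption of the correction along $E$'' is exactly the non-trivial point: one must check that the Euler component $\mathcal Y_I(p_n)$ lies in $p_n$ times the ideal generated by the horizontal components $\mathcal Y_I(x_1),\dots,\mathcal Y_I(x_{n-1})$; this holds because every coefficient of $\mathcal Y_I$ is a multiple of one of the Pfaffians $\varphi_{I\setminus\{i\}}$, which are those horizontal components --- it is not a consequence of homogeneity alone, and without it the correction term need not be controlled. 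Third, in (v) the implication ``$\Sigma$ is countably $n$-rectifiable and $\R^{\ast}$-invariant, hence $\sigma$ is countably $(n-1)$-rectifiable'' is not formal: a countable cover of $\Sigma$ by $n$-dimensional submanifolds may meet a fixed slice badly. One either slices the covering manifolds at a common regular value of the fibre coordinate (Sard), or, as the paper does, uses that in corank $1$ the Pfaffians factor as $\varphi(\mathcal L^2,I)=\varphi_I(x)\,p_n^{|I|}$, so Lemma \ref{lem:RectifiableFormal} applies directly to the functions $\varphi_I$ on $M$. Relatedly, integrability of $\mathcal H$ is obtained in the paper from involutivity of the projected generators $\mathcal Z_I$ plus Frobenius, which avoids the delicate question of whether the $\pi$-images of the leaves of $\vec{\mathcal F}$ are genuinely immersed submanifolds.
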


\begin{remark}\label{REM18june}
It follows from \cite[Th 1.3]{bprfirst} that, in the real-analytic category, the singular set $\sigma$ is a proper analytic subset of $M$ and $\mathcal{H}_{\vert \mathcal{R}_0}$ has constant rank.
\end{remark}

\begin{remark}\label{REM15june}
Since $\mathcal{H}=d\pi(\vec{\mathcal{F}})$ and $\sigma=\pi(\Sigma)$ where $\Sigma$ is invariant by dilation, the assumptions (H1)-(H2) of Theorem \ref{THM2} are satisfied if and only if  $\mathcal{H}_{\vert \mathcal{R}_0}$ has constant rank equal to $0$ or $1$ and  $\sigma$ has Lebesgue measure zero in $M$.
\end{remark}

\subsection{Rank $3$ distributions in dimension $4$}\label{SECEx2}
 Let $M$ be a connected open set of $\mathbb{R}^4$ and $\Delta$ be a rank $3$ totally nonholonomic distribution on $M$ generated by three smooth vector fields $X^1,X^2,X^3$ of the form 
\[
X^i(x) = \partial_{x_i} + A_i(x) \partial_{x_4} \qquad \forall x \in M, \, \forall i=1,2,3,
\] 
where $A_1,A_2,A_3$ are smooth functions from $M$ to $\R$. Note that up to shrinking $M$ we can always assume that such a property holds true on a neigborhood of a given point in $M$. By the equation \eqref{eq:ZI} used in the proof of Theorem \ref{thm:CoRank1Foliation}, the distribution $\mathcal{H}$ given by Theorem \ref{thm:CoRank1Foliation} is generated by the vector field
\[
\mathcal{Z} = [X^1,X^2](x_4) \, X^3 + [X^3,X^1](x_4) \,X^2 + [X^2,X^3](x_4)\, X^1
\]
where for any $i,j \in \{1,2,3\}$, $[X^i,X^j](x_4)$ stands for the Lie derivative of the function $x_4$ along $[X^i,X^j]$, that is,
\[
[X^i,X^j](x_4) = \partial_{x_i}(A_j) -\partial_{x_j}(A_i) + A_i \partial_{x_4}(A_j) -  A_j \partial_{x_4}(A_i).
\]
We can easily verify, by using the Jacobi identity, that $\mathcal{Z}$ has controlled divergence. Moreover, we can check that 
\[
\mathcal{Z}=0 \quad \Longleftrightarrow \quad  [X^1,X^2](x_4) = [X^3,X^1](x_4) =  [X^2,X^3](x_4)=0,
\]
which due to the total nonholonomicity of $\Delta$ shows that $\sigma$ has Lebesgue measure zero in $M$, cf. Lemma \ref{lem:RectifiableFormal} below. As a consequence, by Theorem \ref{THM2} and Remark \ref{REM15june}, the Sard Conjecture holds true.
  
\subsection{Rank $4$ distributions in dimension $5$}\label{SECEx3}
We consider here an example in the lowest dimension for which the Sard Conjecture for corank $1$ distributions remains open. Let $M$ be a connected open set of $\mathbb{R}^5$ and $\Delta$ a rank $4$ totally nonholonomic distribution on $M$ generated by four vector fields $X^1,X^2,X^3,X^4$ of the form
\[
X^i(x) = \partial_{x_i} + A_i(x) \partial_{x_5} \qquad \forall x \in M, \, \forall i=1,2,3,4,
\] 
where $A_1,A_2,A_3, A_4$ are analytic functions from $M$ to $\R$. Note that for sake of simplicity we work here with analytic vector fields. In this case (see Theorem \ref{thm:CoRank1Foliation}  and Remark \ref{REM18june}), the open set $\mathcal{R}_0\subset M$ is the complement of a proper analytic subset of $M$ and the distribution $\mathcal{H}_{\vert \mathcal{R}_0}$ has constant rank. Moreover, from Proposition \ref{PROPL2}, $\mathcal{H}_{\vert \mathcal{R}_0}$ corresponds to the projection of $\mbox{ker}(\mathcal{L}^2)=\mbox{ker}(\omega^{\perp})$ whose dimension coincides with the corank of the $4\times 4$ matrix (see Section \ref{ssec:Goh})
\[
\tilde{H} = [[X^i,X^j](x_5)]_{i,j}.
\]
If $\tilde{H}$ has rank $4$ on $\mathcal{R}_0$, then $\mathcal{H}_{\vert \mathcal{R}_0}$ has rank zero and the Sard Conjecture is easily satisfied. So, we assume that the rank of $\tilde{H}$ is everywhere at most $2$ which by Theorem \ref{thm:CoRank1Foliation} (i) means that $\mathcal{H}_{\vert \mathcal{R}_0}$ has rank $2$. Therefore, the pfaffian of the matrix $\tilde{H}$ vanishes everywhere, that is, 
\[
[X^1,X^2](x_5)[X^3,X^4](x_5) - [X^1,X^3](x_5)[X^2,X^4](x_5) + [X^1,X^4](x_5)[X^2,X^3](x_5) =0,
\]
and by \eqref{eq:ZI}, the distribution $\mathcal{H}$ is generated by the following vector fields,
\[
\begin{aligned}
\mathcal{Z}^1 &= [X^4,X^2](x_5)X^3 + [X^3,X^4](x_5)X^2 + [X^2,X^3](x_5)X^4,\\
\mathcal{Z}^2 &= [X^1,X^4](x_5)X^3 + [X^3,X^1](x_5)X^4 + [X^4,X^3](x_5)X^1,\\
\mathcal{Z}^3 &= [X^1,X^2](x_5)X^4 + [X^4,X^1](x_5)X^2 + [X^2,X^4](x_5)X^1,\\
\mathcal{Z}^4 &= [X^1,X^2](x_5)X^3 + [X^3,X^1](x_5)X^2 + [X^2,X^3](x_5)X^1,
\end{aligned}
\]
or equivalently $\mathcal{H}$ is generated by the following $2$-derivation,
\[
\begin{aligned}
\eta =& [X^1,X^2](x_5) X^3 \wedge X^4+[X^1,X^4](x_5) X^2 \wedge X^3+[X^4,X^2](x_5) X^1 \wedge X^3\\
&+ [X^3,X^1](x_5) X^2 \wedge X^4+[X^2,X^3](x_5) X^1 \wedge X^4+[X^3,X^4](x_5) X^1 \wedge X^2,
\end{aligned}
\]
which can be seen as an analytic section of the bundle of Grassmannian $\mbox{Gr}(2,M)$. Note that we can easily verify that $\mathcal{Z}^1, \mathcal{Z}^2, \mathcal{Z}^3, \mathcal{Z}^4$ have controlled divergence via the Jacobi identity. In conclusion, the integrable distribution $\mathcal{H}$ given by Theorem  \ref{thm:CoRank1Foliation} is generated globally by $4$ analytic vector fields with controlled divergence, has rank $2$ over $\mathcal{R}_0$ and $0$ over $\sigma =M\setminus \mathcal{R}_0$ and the methods of the current paper do not allow us to conclude if the Sard Conjecture is verified or not in this case.

\subsection{The singular set may have positive measure}\label{ex:BadSigma}
 We provide here an example of rank $5$ totally nonholonomic distribution in $\R^6$ for which the singular set $\sigma$ given by Theorem \ref{thm:CoRank1Foliation} has positive Lebesgue measure. We consider the distribution $\Delta$ in $\R^6$ generated by vector fields $X^1,X^2,X^3,X^4,X^5$ of the form
\[
X^i(x) = \partial_{x_i} + A_i(x) \partial_{x_6} \qquad \forall x \in \R^7, \, \forall i=1,2,3,4,5,
\] 
with $A_1, \ldots, A_5: \R^6 \rightarrow \R$ the smooth functions defined by
 \[
\left\{
\begin{array}{l}
A_1(x)=A_5(x)=0\\
A_2(x)=x_1\\
A_3(x)=-x_1\\
A_4(x)=R(x_2+x_3)
\end{array}
\right. \qquad \forall x=(x_1,x_2,x_3,x_4,x_5,x_6) \in \R^6,
\] 
where $R:\R \rightarrow \R$ is a smooth function such that the closed set where $R'$ vanishes has empty interior and positive Lebesgue measure. Then, we check easily that $\Delta$ is totally nonholonomic everywhere (we have $[X^1,X^2]=\partial_{x_6}$), and furthermore, we see that the rank of $\mathcal{H}$, corresponding with the dimension of the projection of $\mbox{ker}(\mathcal{L}^2)=\mbox{ker}(\omega^{\perp})$ coincides with the corank of the $5\times 5$ matrix (see Section \ref{ssec:Goh})
\[
\tilde{H} = [[X^i,X^j](x_6)]_{i,j} = 
\left[ \begin{matrix}
0 & 1 & -1 & 0 & 0 \\
-1 & 0 & 0 & R'(x_2+x_3) & 0 \\
1 & 0 & 0 & R'(x_2+x_3)  & 0 \\
0 & -R'(x_2+x_3) & -R'(x_2+x_3)  & 0 & 0 \\
0 & 0 & 0 & 0 & 0
\end{matrix}
\right].
\]
We conclude that the rank of $\mathcal{H}$ is $1$ over the open set $\{R'(x_2+x_3) \neq 0\}$ and $3$ over the closed set $\{R'(x_2+x_3) = 0\}$. By construction, the set $\{R'(x_2+x_3) = 0\}$ corresponds to the set $\sigma$ of Theorem \ref{thm:CoRank1Foliation}, it is closed with empty interior and positive Lebesgue measure in $\R^6$. 

\section{Preliminaries}\label{SECPrelim}

We gather in this section a few results and notations that will be useful for the proof of Theorem \ref{THM1}. Section \ref{ssec:Goh} is concerned with the Goh matrix which represents the $\mathcal{L}^2$ operator discussed in  \cite[$\S$3.2]{bprfirst} while Section \ref{ssec:PfaffianPoly} introduces several definition and properties on Pfaffians that will be used to define local generators of the singular distribution obtained in Theorem \ref{THM1}.

\subsection{The Goh matrix}\label{ssec:Goh}

We recall here how the Goh matrix is defined locally. Given $x\in M$, we consider an open neighborhood $\mathcal{V}$ of $x$ on which $\Delta$ is generated by $m$ smooth vector fields $X^1, \ldots, X^m$, we define  the Hamiltonians $h^1, \ldots, h^m: T^*\mathcal{V} \rightarrow \R$ by
\[
h^i(\pa) := h^{X^i}(x,p) = p\cdot X^i(x) \qquad \forall \pa=(x,p) \in T^*\mathcal{V}, \, \forall i=1, \ldots,m
\]
and we denote by $\vec{h}^1, \ldots, \vec{h}^m$ the corresponding hamiltonian vector fields. The Goh matrix $H$ at $\pa \in T^*\mathcal{V}$ is the $m\times m$ matrix defined by 
\[
H_{\pa}:= \left[h^{ij}(\pa)\right]_{1\leq i,j \leq m},
\] 
where, for any $i,j \in \{1,\ldots, m\}$,  $h^{ij}$ is the Hamiltonian given by ($\{ \, ,\, \}$ stands for the Poisson bracket) 
\[
h^{ij} := \left\{h^i, h^j\right\}.
\]
By construction, the matrix $H_{\pa}$ represents the linear map 
\[
\mathcal{L}^2_{\pa}:\vec{\Delta}(\pa):=\mbox{Span}\left\{\vec{h}^1(\pa), \cdots, \vec{h}^m(\pa)\right\} \longrightarrow \R^m
\]
defined by 
\[
\left(\mathcal{L}^2_{\pa} (\zeta)\right)_i := \sum_{j=1}^m u_j \, h^{ij}(\pa) \qquad  \forall \zeta = \sum_{i=1}^m u_i \vec{h}^{i}(\pa) \in \vec{\Delta}(\pa), \, \forall i=1, \ldots, m
\]
which satisfies the following result (see \cite[Prop. 3.5]{bprfirst}):

\begin{proposition}\label{PROPL2}
For every  $\pa \in T^*\mathcal{V} \cap \DeltaPerp$, we have $\mbox{\em ker}(\mathcal{L}^2_{\pa})=\mbox{\em ker} ( \omega^{\perp}_{\pa} )$.
\end{proposition}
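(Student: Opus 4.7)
The plan is to show both kernels coincide with the same subspace, namely $\vec{\Delta}(\pa) \cap T_{\pa}\DeltaPerp$, by chasing the standard symplectic linear algebra.

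First I would observe that since $X^1,\ldots,X^m$ are pointwise linearly independent on $\mathcal{V}$, the differentials $dh^1,\ldots,dh^m$ are pointwise linearly independent on $T^*\mathcal{V}$ (one sees this immediately in local symplectic coordinates $(x,p)$, where the $\delta p$-components of $dh^i$ already span an $m$-dimensional subspace). Consequently $\DeltaPerp$ is locally cut out of $T^*\mathcal{V}$ by the regular system $h^1 = \cdots = h^m = 0$, so at any $\pa \in \DeltaPerp$ one has the intrinsic description
\[
T_{\pa} \DeltaPerp \;=\; \bigcap_{i=1}^m \ker\bigl(dh^i_{\pa}\bigr).
\]

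Next I would translate $\mathcal{L}^2_{\pa}$ into this language. For $\zeta = \sum_j u_j \vec{h}^j(\pa) \in \vec{\Delta}(\pa)$, the defining identity $dh^i(\vec{h}^j) = \{h^i,h^j\} = h^{ij}$ gives
\[
dh^i(\zeta) \;=\; \sum_{j=1}^m u_j \, h^{ij}(\pa) \;=\; \bigl(\mathcal{L}^2_{\pa}(\zeta)\bigr)_i.
\]
Hence $\zeta \in \ker(\mathcal{L}^2_{\pa})$ if and only if $dh^i(\zeta) = 0$ for every $i$, which is equivalent to $\zeta \in T_{\pa}\DeltaPerp$. This yields
\[
\ker(\mathcal{L}^2_{\pa}) \;=\; \vec{\Delta}(\pa) \cap T_{\pa}\DeltaPerp.
\]

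Then I would invoke the standard fact that for a submanifold $N$ of a symplectic manifold cut out by independent functions $f_1,\ldots,f_k$, the symplectic orthogonal satisfies $(T_{\pa}N)^{\omega} = \mbox{Span}\{\vec{f}^1(\pa),\ldots,\vec{f}^k(\pa)\}$, and therefore $\ker(\omega|_N)_{\pa} = T_{\pa}N \cap (T_{\pa}N)^{\omega}$. Applied to $N = \DeltaPerp$ with $f_i = h^i$, this reads
\[
\ker(\omega^{\perp}_{\pa}) \;=\; T_{\pa}\DeltaPerp \cap \mbox{Span}\bigl\{\vec{h}^1(\pa),\ldots,\vec{h}^m(\pa)\bigr\} \;=\; T_{\pa}\DeltaPerp \cap \vec{\Delta}(\pa),
\]
which matches the description of $\ker(\mathcal{L}^2_{\pa})$ obtained above, concluding the proof.

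No step feels genuinely hard here; the only mild care is verifying the independence of $dh^1,\ldots,dh^m$ so that the submanifold interpretation of $\DeltaPerp$ and the formula for its symplectic orthogonal apply. Everything else is essentially bookkeeping with the Poisson bracket and one line of symplectic linear algebra.
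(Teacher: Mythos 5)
Your proof is correct and is exactly the standard symplectic linear-algebra argument behind the statement the paper simply cites (\cite[Prop.\ 3.5]{bprfirst}): both kernels are identified with $\vec{\Delta}(\pa)\cap T_{\pa}\DeltaPerp$, using the regularity of the system $h^1=\dots=h^m=0$ and the identity $(T_{\pa}\DeltaPerp)^{\omega}=\mbox{Span}\{\vec{h}^1(\pa),\dots,\vec{h}^m(\pa)\}$. The only cosmetic point is the sign convention: with the paper's convention $dh^i(\vec{h}^j)=\vec{h}^j(h^i)=h^{ji}=-h^{ij}$, but this is immaterial since the Goh matrix is antisymmetric and hence has the same kernel as its transpose.
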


As it was recalled for instance in \cite[Proposition 3.4]{bprfirst}, an absolutely continuous curve $\gamma : [0,1] \rightarrow M$ which is horizontal with respect to $\Delta$ is singular if and only if it admits an anormal lift, that is an absolutely continuous curve of $\psi : [0,1] \rightarrow \DeltaPerp$ satisfying $\dot{\psi}(t) \in \mbox{ker} ( \omega^{\perp}_{\psi(t)} )$ for almost every $t\in [0,1]$.

\subsection{Pfaffian polynomial of minors}\label{ssec:PfaffianPoly}

Let $m\in \mathbb{N}$ be fixed, and let $R$ be a sub-ring of the formal power series $\mathbb{R} \lb x_1,\ldots,x_m \rb $, such as $\mathbb{R}\lb x_1,\ldots,x_{m}\rb$ itself or $\mathbb{R}\{ x_1,\ldots,x_m \}$, the ring of analytic function germs at the origin of $\mathbb{R}^m$. Denote by $\mathbb{K}=\mbox{Frac}(R)$ its field of fractions. We consider a $\mathbb{K}$-vector space $V$ of dimension $n$ and we fix an orthonormal basis $(e_1,\ldots, e_{n})$ of $V$. We also fix, once and for all, an ordering on the index set $\{1,\ldots, n\}$ which we assume to be $1<2<\ldots<n$ for simplicity.

Recall that an anti-symmetric bilinear operator over $V$ can be written as
\[
A = \sum_{i<j} a_{ij} \, e_i \wedge e_j, \quad a_{ij} \in \mathbb{K}
\]
and fix the notation $a_{ji} = -a_{ij}$. Under this convention, $A$ admits a representation as a matrix $M_A = [a_{ij}]_{i,j}$, such that $A(v,w) = v^{tr} \cdot M_A \cdot w$ for all vectors $v$ and $w$ in $V$.

\begin{definition}\label{def:Pfaffian1}
Suppose that the dimension $n$ of $V$ is even. The Pfaffian polynomial $\varphi(A)$ of an anti-symmetric bilinear operator $A$ over $V$ is defined by
\[
\displaystyle{\frac 1 {(n/2)!}} \bigwedge^{n/2} A  =: \varphi(A)\, e_1 \wedge \ldots \wedge e_{n}.
\]
If $V$ has dimension zero, we fix the convention that $\varphi(A)=1$. If $V$ has odd dimension, we fix the convention that $\varphi(A)=0$.
\end{definition}

It is clear from the definition that $\varphi(A) \in \mathbb{K}$. We denote by $\mbox{Det}(A)$ the determinant of the associated matrix $M_A$ of $A$, it is well known (see, e.g. \cite[Section 5.8.1]{Winitzki}) that
\begin{eqnarray}\label{11fev2}
\varphi(A)^2 = \mbox{Det}(A).
\end{eqnarray}
We are now interested in considering a family of Pfaffian polynomials associated with the minors of $A$. Given $l \in \{1,\ldots, n\}$, we denote by $\Lambda_l$ the set of indices subsets $I \subset \{1,\ldots, n\}$ of cardinality $l$ and for every $I \in \Lambda_l$ we define the anti-symmetric bilinear operator
\[
A_I:= \sum_{i<j\in I} a_{ij}\, e_i \wedge e_j
\]
which can be seen as an operator over the subspace $V_I \subset V$ of dimension $l$. Then, we set
\[
\mbox{Det}(A,I) := \mbox{Det}(A_{I}) \quad \mbox{and} \quad \varphi(A,I) = \varphi(A_{I}).
\]
In order to keep the compatibility of signs between different Pfaffian of minors, we always consider the ordering $\{i_1<\cdots < i_l\}$ of the elements of $I$ and we fix the convention
\[
\bigwedge_{i\in I} e_i = e_{i_1} \wedge \ldots \wedge e_{i_l}.
\]
Then, we consider the function $\epsilon$ whose input is an index set $I$ and an element $j\in I$, and whose output is a value in $\{-1,1\}$ defined by
\[
e_{j} \wedge  \bigwedge_{i\in I\setminus\{j\}} e_i  = \epsilon(I,j) \, \bigwedge_{i\in I} e_i
\]
We are now ready to provide formulas which characterize Pfaffian minors and their derivatives in terms of Pfaffian of smaller orders:

\begin{proposition}\label{PROPPfaffian}
Let $A$ be an anti-symmetric bilinear operator over the $\mathbb{K}$-vector space $V$ and $I$ be a sub-index of $\{1,\ldots,n\}$ of even cardinality $r=2s$, then the following properties are satisfied: 
\begin{enumerate}
\item[(i)] For every $i_0 \in I$, we have:
\[
\begin{aligned}
\varphi(A,I) &= \frac{1}{s} \sum_{j\in I\setminus \{i_0\}} \epsilon(I,i_0)\cdot \epsilon(I\setminus \{i_0\},j) \cdot a_{i_0j} \cdot \varphi(A,I\setminus\{i_0,j\}).
\end{aligned}
\]
\item[(ii)] For any $\mathbb{R}$-derivation $X$ over $R$, there holds
\[
\begin{aligned}
X\left[\varphi(A,I)\right] &= \frac{s}{2} \cdot \sum_{i\neq j\in I} \epsilon(I,i) \cdot \epsilon(I\setminus\{i\},j) \cdot \varphi(A,I\setminus \{i,j\}) \cdot X(a_{i,j})
\end{aligned}
\]
\end{enumerate}
\end{proposition}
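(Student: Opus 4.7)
The approach is to work directly from the defining identity
\[
\varphi(A,I) \bigwedge_{i \in I} e_i = \frac{1}{s!}\, A_I^{\wedge s}
\]
and to perform algebraic manipulations in the exterior algebra, exploiting two vanishing principles: $e_i \wedge e_i = 0$, and the $r$-th wedge power of a $2$-form supported on an $n'$-dimensional subspace vanishes whenever $n' < 2r$. Because $2$-forms have even degree they commute in $\Lambda V$, so the binomial formula applies without sign corrections and the two vanishing principles collapse the resulting expansion to a single surviving cross-term.

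For part (i), I would fix $i_0 \in I$ and decompose $A_I = E + B$, where $E := \sum_{j \in I \setminus \{i_0\}} a_{i_0 j}\, e_{i_0} \wedge e_j$ collects the terms of $A_I$ involving $e_{i_0}$ and $B := A_{I \setminus \{i_0\}}$ is the remainder. Every factor of $E$ carries $e_{i_0}$, so $E^{\wedge k} = 0$ for $k \geq 2$; and since $B$ is a $2$-form on the $(2s-1)$-dimensional space $V_{I \setminus \{i_0\}}$, we have $B^{\wedge s} = 0$. The binomial expansion thus collapses to $A_I^{\wedge s} = s\, E \wedge B^{\wedge (s-1)}$. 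Expanding $E$ and further splitting, for each $j \in I \setminus \{i_0\}$, $B = F_j + A_{I \setminus \{i_0, j\}}$ where $F_j$ gathers the terms of $B$ containing $e_j$, the identity $e_j \wedge F_j = 0$ together with the same vanishing arguments reduces the $j$-summand to $a_{i_0 j}\, e_{i_0} \wedge e_j \wedge A_{I \setminus \{i_0, j\}}^{\wedge (s-1)}$. Applying the defining identity once more to $A_{I \setminus \{i_0, j\}}^{\wedge (s-1)}$ and invoking $\epsilon$ twice to reorder $e_{i_0} \wedge e_j \wedge \bigwedge_{k \in I \setminus \{i_0, j\}} e_k$ into $\bigwedge_{k \in I} e_k$ produces the stated expansion.

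For part (ii), I would apply the derivation $X$ to both sides of the defining identity, treating the basis vectors $e_i$ as constants so that $X(A_I) = \sum_{i < j \in I} X(a_{ij})\, e_i \wedge e_j$. The Leibniz rule then gives
\[
X\bigl[\varphi(A,I)\bigr] \bigwedge_{i \in I} e_i = \frac{1}{(s-1)!}\, X(A_I) \wedge A_I^{\wedge (s-1)}.
\]
For each pair $i < j$ in $I$, applying the vanishing principles of part (i) to the ``$e_i$-or-$e_j$-part'' of $A_I$ yields $e_i \wedge e_j \wedge A_I^{\wedge(s-1)} = e_i \wedge e_j \wedge A_{I \setminus \{i,j\}}^{\wedge(s-1)}$; a second use of the defining identity together with the two $\epsilon$ reorderings evaluates this to $(s-1)!\, \epsilon(I, i)\, \epsilon(I \setminus \{i\}, j)\, \varphi(A, I \setminus \{i,j\}) \bigwedge_{k \in I} e_k$. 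Summing over $i < j$ gives the expansion over unordered pairs; symmetrizing over ordered pairs $i \neq j$---using the coherence $X(a_{ji})\, \epsilon(I,j)\, \epsilon(I\setminus\{j\}, i) = X(a_{ij})\, \epsilon(I,i)\, \epsilon(I\setminus\{i\}, j)$, in which the antisymmetry $X(a_{ji}) = -X(a_{ij})$ is cancelled by the sign flip of $\epsilon$ under swapping its two arguments---produces the claimed formula.

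The main obstacle is sign bookkeeping rather than any conceptual difficulty: each $\epsilon$ factor must be traced through the precise reordering of wedges $e_{i_0} \wedge e_j \wedge \bigwedge_{k \in I \setminus \{i_0, j\}} e_k$ into $\bigwedge_{k \in I} e_k$, and checking that these conventions match consistently through the iterated reductions---and that the symmetrization in (ii) is indeed coherent---requires some care but no new ideas.
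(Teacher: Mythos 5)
Your part (i) is essentially the paper's own argument: the decomposition $A_I=E+B$ with $E$ the $e_{i_0}$-part is exactly the paper's $A_I=v_{i_0}+B_{i_0}$, and the two vanishing principles are the same. Part (ii), however, is a genuinely different route. The paper does not differentiate the wedge identity; it differentiates $\mbox{Det}(A_I)=\varphi(A,I)^2$ via Jacobi's formula, which brings in the adjugate matrix, and it then needs an auxiliary identity (Lemma \ref{LEMPfaffianTechnical}) expressing the off-diagonal minors $\mbox{Det}(A,T\setminus\{i\},T\setminus\{j\})$ of an odd-size skew-symmetric block as the product $\varphi(A,T\setminus\{i\})\cdot\varphi(A,T\setminus\{j\})$ --- proved by a bordered-matrix trick --- together with a second application of (i). Your direct differentiation of $\frac{1}{s!}A_I^{\wedge s}$ avoids the adjugate and the auxiliary lemma entirely, at the cost only of the observation $e_i\wedge e_j\wedge A_I^{\wedge(s-1)}=e_i\wedge e_j\wedge A_{I\setminus\{i,j\}}^{\wedge(s-1)}$, which is correct and is the same vanishing mechanism as in (i); your symmetrization over ordered pairs is also coherent for the reason you state. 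It is the shorter and more self-contained argument.

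One point you must reconcile before writing this up: your intermediate step $A_I^{\wedge s}=s\,E\wedge B^{\wedge(s-1)}$ (the binomial coefficient $\binom{s}{1}$, which is correct) is incompatible with the prefactor $\tfrac1s$ in the stated formula (i). Carrying your computation through gives $\frac{1}{s!}A_I^{\wedge s}=\frac{1}{(s-1)!}E\wedge B^{\wedge(s-1)}$, and hence the expansion of $\varphi(A,I)$ with prefactor $1$ rather than $\tfrac1s$, consistent with the classical Laplace-type expansion of the Pfaffian (check $s=2$, $I=\{1,2,3,4\}$, $i_0=1$: the signed sum equals $a_{12}a_{34}-a_{13}a_{24}+a_{14}a_{23}=\varphi(A,I)$ on the nose, so the stated $\tfrac12$ would give half the Pfaffian). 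Likewise your method for (ii) yields the prefactor $\tfrac12$ on $\sum_{i\neq j}$, not $\tfrac s2$. The paper's own proof of (i) drops this binomial coefficient in its first display, which is where its $\tfrac1s$ comes from, and its (ii) inherits the compensating factor $s$ because it is derived from (i). None of this affects any downstream use in the paper (only non-vanishing and the sign pattern are ever exploited), but as written your proposal asserts both the factor $s$ and the stated $\tfrac1s$, and these cannot both stand; you should either correct the constants in the statement or locate the missing factor, since your argument as outlined will not literally ``produce the stated expansion.''
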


Proposition \ref{PROPPfaffian}, whose proof is postponed to $\S$\ref{APPPROPPfaffian}, will be used to provide suitable generators of the kernel of $A$, that is, of the subspace $\mbox{ker}(A)$ of all vectors $v\in V$ such that $A(v,\cdot) \equiv 0$. In order to make this idea precise, we recall that an even number $r=2s$ is said to be the rank of $A$, which we denote by $\mbox{rank}(A)$, if
\[
\bigwedge^{s} A \neq 0 \quad \text{ and } \quad \bigwedge^{s+1} A = 0.
\]
Note that the kernel of $A$ is a linear subspace of dimension $n-r$. It is, of course, possible to provide generators of $\mbox{ker}(A)$ via Cramer's rule, but these generators will not satisfy differential properties that will be needed later on. Instead, we now describe generators of $\mbox{ker}(A)$ in terms of the Pfaffian polynomials which are better adapted to our future objectives, c.f. Lemma \ref{CLAIMdivergence} below. The following result holds:

\begin{proposition}\label{PROPPfaffianBasis}
Let $A$ be an anti-symmetric bilinear operator of rank $r<n$ over $V$. Then we have
\[
\mbox{\em ker}(A) = \mbox{\em Span} \Bigl\{ Z_I \, \vert \, I \in \Lambda_{r+1}\Bigr\},
\]
where for every sub-index $I \in \Lambda_{r+1}$, the vector $Z_I \in V$ is defined by
\[
Z_I := \sum_{i\in I} \epsilon(I ,i)\cdot  \varphi(A,I\setminus \{i\}) \cdot e_i.
\]
\end{proposition}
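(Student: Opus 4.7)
The plan is to establish the statement in two parts: first verify that each $Z_I$ belongs to $\mbox{ker}(A)$, and then check that the family $\{Z_I : I \in \Lambda_{r+1}\}$ spans an $(n-r)$-dimensional subspace, which necessarily equals $\mbox{ker}(A)$ since the kernel of an anti-symmetric operator of rank $r$ on an $n$-dimensional space has dimension $n-r$.

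\textbf{Containment $Z_I \in \mbox{ker}(A)$.} Fix $I \in \Lambda_{r+1}$ and an index $k \in \{1,\ldots,n\}$; I will show that $A(Z_I, e_k) = \sum_{i\in I} \epsilon(I,i)\,\varphi(A,I\setminus\{i\})\,a_{ik} = 0$. When $k \notin I$, consider $J := I \cup \{k\} \in \Lambda_{r+2}$. Since any $(2s+2)\times(2s+2)$ submatrix of the rank-$2s$ matrix $M_A$ has rank at most $2s$, the operator $A_J$ is singular, so by \eqref{11fev2} one has $\varphi(A,J) = 0$. Expanding $\varphi(A,J)$ via Proposition \ref{PROPPfaffian}(i) with distinguished element $i_0 = k$ yields exactly (up to the nonzero factor $\tfrac{\epsilon(J,k)}{s+1}$) the sum $\sum_{j\in I} \epsilon(I,j)\,a_{kj}\,\varphi(A,I\setminus\{j\})$, so this sum vanishes, which gives the desired identity after using $a_{kj} = -a_{jk}$. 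When $k \in I$, the term $i=k$ drops out since $a_{kk}=0$, and I apply Proposition \ref{PROPPfaffian}(i) to each $\varphi(A, I\setminus\{i\})$ (which has even cardinality $2s$ and contains $k$) with distinguished element $k$, producing the double sum
\[
\frac{1}{s}\sum_{\substack{i,j\in I\\ i\neq j,\ i,j\neq k}} \epsilon(I,i)\,\epsilon(I\setminus\{i\},k)\,\epsilon(I\setminus\{i,k\},j)\,a_{ik}\,a_{kj}\,\varphi(A,I\setminus\{i,k,j\}).
\]
Swapping $i$ and $j$ preserves the product $a_{ik}a_{kj} = a_{jk}a_{ki}$, so it suffices to show the corresponding sign factors are opposite. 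But the iterated definition of $\epsilon$ gives $\epsilon(I,i)\epsilon(I\setminus\{i\},k)\epsilon(I\setminus\{i,k\},j)\bigwedge_{l\in I} e_l = e_i \wedge e_k \wedge e_j \wedge \bigwedge_{l\in I\setminus\{i,j,k\}} e_l$, and the analogous identity with $i,j$ swapped gives $e_j \wedge e_k \wedge e_i$; since $e_i\wedge e_k\wedge e_j = -e_j\wedge e_k\wedge e_i$, the symmetric pairs cancel and the sum vanishes.

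\textbf{Spanning.} Because $\mbox{rank}(A) = r = 2s$, we have $\bigwedge^s A \neq 0$, which forces the existence of some $I_0 \in \Lambda_r$ with $\varphi(A, I_0) \neq 0$. For every $k \in \{1,\ldots,n\} \setminus I_0$, the set $I_k := I_0 \cup \{k\}$ lies in $\Lambda_{r+1}$ and
\[
Z_{I_k} = \epsilon(I_k, k)\,\varphi(A, I_0)\, e_k + \sum_{i \in I_0} \epsilon(I_k, i)\,\varphi(A, I_k \setminus \{i\})\, e_i,
\]
so the $e_k$-coefficient is nonzero while the remaining terms lie in $\mbox{Span}\{e_i : i \in I_0\}$. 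Hence the $n-r$ vectors $\{Z_{I_k}\}_{k\notin I_0}$ are linearly independent modulo $\mbox{Span}\{e_i : i \in I_0\}$, and in particular linearly independent in $V$. Since all of them lie in $\mbox{ker}(A)$ and $\dim \mbox{ker}(A) = n-r$, they form a basis of $\mbox{ker}(A)$, which proves the claimed equality.

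The main subtlety is the sign analysis in the case $k \in I$; all the other steps reduce to direct applications of Proposition \ref{PROPPfaffian}(i) and the general fact that submatrices of a rank-$r$ matrix have rank at most $r$. Interpreting $\epsilon(I,i_1)\epsilon(I\setminus\{i_1\},i_2)\cdots$ as the sign needed to reorder $e_{i_1}\wedge e_{i_2}\wedge\cdots$ into the canonical order on $I$ is what makes the cancellation transparent.
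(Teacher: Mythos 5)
Your proof is correct. Both you and the paper obtain the inclusion $Z_I \in \mbox{ker}(A)$ by expanding a vanishing Pfaffian attached to an index set of cardinality $r+2$ via Proposition \ref{PROPPfaffian}(i), and your spanning argument (pick $I_0\in\Lambda_r$ with $\varphi(A,I_0)\neq 0$ and border it by each $k\notin I_0$) is identical to the paper's. The difference lies in how the identity $A(Z_I,e_k)=0$ is organized. The paper introduces a single auxiliary anti-symmetric operator $A_{I,k}$ on the enlarged space $V\times\mathbb{K}$, whose matrix borders $M_I$ by the column $(a_{ik})_{i\in I}$; its Pfaffian vanishes in both cases ($k\in I$ produces a repeated row, $k\notin I$ produces a minor of $M_A$ of size $r+2$ exceeding the rank), and a single application of Proposition \ref{PROPPfaffian}(i) with distinguished element $n+1$ yields the identity uniformly in $k$. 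You instead split into two cases: for $k\notin I$ you argue exactly as above but inside $V$ (using \eqref{11fev2} and the rank bound on minors), while for $k\in I$ you re-expand each $\varphi(A,I\setminus\{i\})$ at $k$ and cancel the resulting double sum. That cancellation is sound: $a_{ik}a_{kj}$ and $\varphi(A,I\setminus\{i,j,k\})$ are symmetric in $(i,j)$, while the composite $\epsilon$-factor, being the reordering sign of $e_i\wedge e_k\wedge e_j$ against the canonical order on $I$, is antisymmetric, so the terms pair off. Your route never leaves $V$ and is entirely elementary; the paper's bordered-operator trick buys a one-line uniform treatment of both cases at the cost of an auxiliary dimension. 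Either argument is acceptable.
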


The proof of Proposition \ref{PROPPfaffianBasis} follows easily from Proposition \ref{PROPPfaffian} (i), it is given in $\S$\ref{APPPROPPfaffian2}. As we said before, the formula (ii) of Proposition \ref{PROPPfaffian} will be used to show that the generators for $\vec{\mathcal{F}}$ in Theorem \ref{THM1} have controlled divergence.

\begin{remark}
Let $\mathcal{M}$ denote a free $R$-module, where $R=\mathbb{R} \lb x_1,\ldots,x_m \rb$ and let $A$ be an anti-symmetric bilinear operator over $\mathcal{M}$. Although all elements $Z_I$ belong to the module $\mathcal{M}$ (because the coefficients of $Z_{I}$ are polynomials in $a_{ij} \in R$), we do not know if the collection $\{Z_I\}_{I\in \Lambda_{r+1}}$ generates the sub-module $\mbox{\em ker}(A) \subset \mathcal{M}$. In general, finding generators of a sub-module is a much more subtle problem than its analogous for vector-spaces, cf. \cite[$\S$1 and 55]{Seid74}.
\end{remark}

\section{Proof of Theorem \ref{THM1}}\label{SECProofTHM1}

Let $M$ and $\Delta$ of class $\mathcal{C}$ be fixed, we divide the proof into three parts. 

\subsection{Proof of assertions (i)-(iii)}\label{SECProofTHM1_1}

We start by constructing the set $\mathcal{S}_0$ as a union of disjoint open sets. Let $d_1$ be the minimum of the dimension of $\mbox{ker}(\omega^{\perp})$ over $\DeltaPerp$. By upper semi-continuity of the function $\mathfrak{d}:\pa \in \DeltaPerp \mapsto \dim (\mbox{ker}(\omega^{\perp}_{\pa})) \in \N$, the set of points $\pa \in \DeltaPerp$ where $\mathfrak{d}(\pa) =d_1$ is an open subset of $\DeltaPerp$, we denote it by $\mathcal{S}_0^1$. Note that since $\omega^{\perp}$ is skew-symmetric, we have $d_1 \equiv m \, (2)$. Moreover, by non-holonomicity of $\Delta$, we may conclude that $d_1\leq m-2$ (a detailed proof is given in \cite[Theorem 1.1 (iv)]{bprfirst}). Moreover, since $\mathfrak{d}$ is invariant by dilation, the set $\mathcal{S}_0^1$ is invariant by dilation too. If the closed set $\DeltaPerp \setminus \mathcal{S}_0^1$ has empty interior, then we are done and we set $ \mathcal{S}_0 =\mathcal{S}_0^1$. Otherwise, we denote by $\DeltaPerp_1$ the interior of $\DeltaPerp \setminus \mathcal{S}_0^1$, we consider the minimum $d_2$ of $\mathfrak{d}(\pa)$ for $\pa \in \DeltaPerp_1$ and we define $\mathcal{S}_0^2$ the set of points $\pa \in \DeltaPerp_1$ where $\mathfrak{d}(\pa) =d_2$. By construction,  $\mathcal{S}_0^2$ is an open subset of $\DeltaPerp$ which is invariant by dilation and does not intersect $\mathcal{S}_0^1$ and in addition we have $d_2>d_1$, $d_2 \equiv m \, (2)$ and $d_2\leq m-2$. By continuing this process, we construct in a finite number of steps (because the mapping $i \mapsto d_i$ is increasing) an increasing family of dimensions $d_1, \ldots, d_s$ along with a family of disjoint open subsets $\mathcal{S}_0^1, \ldots, \mathcal{S}_0^s$ of $\DeltaPerp$ such that 
\[
\mathfrak{d}(\pa) = d_i \qquad \forall \pa \in \mathcal{S}_0^i, \, \forall i=1, \ldots,s
\]
and  the set 
\[
\mathcal{S}_0 := \mathcal{S}_0^1 \cup \cdots \cup \mathcal{S}_0^s
\]
is open and dense in $\DeltaPerp$. Now we define the singular distribution $\vec{\mathcal{F}}$ as equal to $\mbox{ker}(\omega^{\perp})$ over $\mathcal{S}_0$ and $0$ over its complement. Note that $\mbox{ker}(\omega^{\perp})$ is constant over each connected component of $\mathcal{S}_0$. By symplectic arguments, see for instance \cite[Prop. 3.3]{bprfirst}, we conclude that $\vec{\mathcal{F}}_{|\mathcal{S}_0}$ is integrable. Therefore, $\vec{\mathcal{F}}$ is a singular integrable distribution on $\DeltaPerp$ satisfying (i) and (ii). Furthermore, assertion (iii) follows easily from the characterization of singular curves as projections of abnormal extremals (see {\it e.g.} \cite[Proposition 3.4]{bprfirst}).

\subsection{Proof of assertion (iv)}\label{SECProofTHM1_2}
Since the result is local in $M$, we may assume that $\Delta$ is generated by $m$ $\mathcal{C}$-vector fields $X^1,\ldots,X^m$, and that there exists a globally defined symplectic coordinate system $(x,p)=(x_1,\ldots,x_n,p_1,\ldots,p_n)$ over $T^{\ast}M$. For each $l=1,\ldots, m$, we consider the set of indices subsets
\[
\Lambda_l = \Bigl\{J \subset \{1,\ldots, m\}\, \vert \, |J|=l \Bigr\},
\]
where $|J|$ stands for the cardinality of the set $J$. Then, we denote by $H=[h^{ij}]_{ij}$ the $m\times m$ skew-symmetric matrix associated to the operator $\mathcal{L}^2$ defined in $\S\S$\ref{ssec:Goh}, and for every $J\in\Lambda_l$, $l\in \{1, \ldots,m\}$, we set
\[
H_{I} :=  \left[h^{ij}\right]_{i,j\in J} \quad \text{ and } \quad \mbox{Det}(\mathcal{L}^2,J) := \det (H_{J}). 
\]
By construction, all $l\times l$ matrices $H_J$ are skew-symmetric and, by (\ref{11fev2}), we have
\[
\mbox{Det}(\mathcal{L}^2,J) = \varphi(\mathcal{L}^2,J)^2,
\]
where $\varphi(\mathcal{L}^2,J)$ is the Pfaffian polynomial associated to $H_J$ and compatible with the ordering of the index set (see Definition \ref{def:Pfaffian1}). 

By keeping the same notations as in the previous section we recall that $\mathcal{S}_0$ is defined as the open dense subset of $\DeltaPerp$ given by
\[
\mathcal{S}_0 = \mathcal{S}_0^1 \cup \cdots \cup \mathcal{S}_0^s
\]
where $\mathcal{S}_0^1, \ldots, \mathcal{S}_0^s$ is a collection of disjoint open sets in $\DeltaPerp$ associated with a family of integers $d_1 < \cdots < d_s$ and a family of closed sets $\mathcal{C}^1, \ldots, \mathcal{C}^s$  defined recursively by
\[
d_1 = \min_{\pa \in \DeltaPerp} \left\{\mathfrak{d}(\pa)\right\}, \quad \mathcal{S}_0^{1} = \mathfrak{d}^{-1}(d_1), \quad \mathcal{C}^1=\DeltaPerp \setminus \mathcal{S}_0^1
\]
and for any integer $i\geq 1$ for which $\mathcal{C}^{i}$ has nonempty interior, 
\[
d_{i+1} = \min \left\{\mathfrak{d}(\pa)\, \vert \, \pa \in \mbox{Int}(\mathcal{C}_i)\right\}, \quad \mathcal{S}_0^{i+1} = \left\{ \pa \in  \mbox{Int}(\mathcal{C}_i) \, \vert \, \mathfrak{d}(\pa)=d_{i+1}\right\}, \quad \mathcal{C}^{i+1}=\mathcal{C}^{i}\setminus \mathcal{S}_0^{i+1}.
\]
Note that all sets $\mathcal{S}_0^1, \ldots, \mathcal{S}_0^s$, $\mathcal{C}^1, \ldots, \mathcal{C}^r$ are invariant by dilation. By construction and by Proposition \ref{PROPL2}, for every $i\in \{1, \ldots, s\}$, the linear map $\mathcal{L}^2$ has rank $r_i := m-d_i$ at a point $\pa \in \mbox{Int}(\mathcal{C}^{i})$ (we set $\mathcal{C}^0:=\DeltaPerp$) if, and only if, $\pa \in \mathcal{S}_0^{i}$. We now define the singular distribution $\vec{\mathcal{F}}$ by using the system of generators given in Proposition \ref{PROPPfaffianBasis}. Given $i\in \{1, \ldots,s\}$, we define for each index set $J \in \Lambda_{r_i+1}$ the smooth vector field
\begin{eqnarray}\label{11fev1}
\mathcal{Y}_J^{i} := \sum_{j\in J} \epsilon(J ,j)\cdot  \varphi(\mathcal{L}^2,J\setminus \{j\}) \cdot   \vec{h}^j,
\end{eqnarray}
where the definition of $\epsilon(J ,j)$ was introduced in Section \ref{ssec:PfaffianPoly}. We note that the vector fields $\mathcal{Y}_J^{i}$ are all homogeneous with respect to $p$; indeed all $\vec{h}^i$ are homogeneous vector fields and all $\varphi(\mathcal{L}^2,J)$ are homogeneous functions. The following lemma is a direct consequence of Propositions \ref{PROPL2} and  \ref{PROPPfaffianBasis}.

\begin{lemma}\label{LEMgenerators}
For every $i\in \{1, \ldots, s\}$, we have 
\begin{eqnarray}\label{21june1}
\mbox{\em ker}\bigl(\omega^{\perp}_{\pa}\bigr) = \mbox{\em Span} \left\{ \mathcal{Y}_J^{i}(\pa)\, \vert \, J \in \Lambda_{r_i+1}\right\} \qquad \forall \pa \in \mathcal{S}_0^{i}
\end{eqnarray}
and
\begin{eqnarray}\label{21june2}
\mathcal{Y}_J^{i}(\pa) = 0 \qquad \forall \pa \in \mathcal{C}^{i}, \, \forall J \in \Lambda_{r_i+1}.
\end{eqnarray}
\end{lemma}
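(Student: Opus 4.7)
The plan is to combine Propositions \ref{PROPL2} and \ref{PROPPfaffianBasis}, once a dictionary is set up between the local Hamiltonian frame and the abstract Pfaffian formalism of Section \ref{ssec:PfaffianPoly}. First, I would observe that at every $\pa \in \tilde{\mathcal{V}}$ the Hamiltonian vector fields $\vec{h}^1(\pa),\ldots,\vec{h}^m(\pa)$ are linearly independent (since $X^1,\ldots,X^m$ is a local frame of $\Delta$), hence form a basis of $\vec{\Delta}(\pa)$. I would identify this basis with the orthonormal basis $(e_1,\ldots,e_m)$ of an abstract $\mathbb{R}$-vector space $V$ and consider the anti-symmetric bilinear operator $A$ on $V$ represented by the Goh matrix $H_{\pa} = [h^{ij}(\pa)]$. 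Under this identification, the Pfaffians $\varphi(\mathcal{L}^2,I)(\pa)$ coincide with $\varphi(A,I)$, and the abstract vector $Z_J$ of Proposition \ref{PROPPfaffianBasis} is exactly $\mathcal{Y}_J^i(\pa)$, as defined by \eqref{11fev1}.

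For \eqref{21june1}, I would fix $\pa \in \mathcal{S}_0^i$. By construction (see Section \ref{SECProofTHM1_1}), $\mathcal{L}^2_{\pa}$ has rank $r_i = m - d_i$. Proposition \ref{PROPL2} identifies $\mbox{ker}(\omega^{\perp}_{\pa}) = \mbox{ker}(\mathcal{L}^2_{\pa})$. Applying Proposition \ref{PROPPfaffianBasis} to the rank-$r_i$ operator $A$ gives $\mbox{ker}(A) = \mbox{Span}\{Z_J : J \in \Lambda_{r_i+1}\}$, which via the dictionary yields \eqref{21june1}.

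For \eqref{21june2}, I would prove by induction on $i$ that on $\mathcal{C}^i$ the function $\mathfrak{d}$ is strictly greater than $d_i$. The base case $i=1$ is immediate because $\mathcal{S}_0^1 = \{\mathfrak{d} = d_1\}$ where $d_1$ is the global minimum. For the inductive step, the definition of $d_{i+1}$ as the minimum of $\mathfrak{d}$ on $\mbox{Int}(\mathcal{C}^i)$, combined with upper semi-continuity of $\mathfrak{d}$ and the parity $\mathfrak{d} \equiv m \pmod{2}$ (which forces jumps of at least $2$), will give the strict inequality on the whole set $\mathcal{C}^{i+1} = \mathcal{C}^i \setminus \mathcal{S}_0^{i+1}$. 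Once the rank bound $\mbox{rank}(\mathcal{L}^2_{\pa}) < r_i$ is in hand, each Pfaffian appearing in \eqref{11fev1} satisfies $\varphi(\mathcal{L}^2, J\setminus\{j\})(\pa)^2 = \mbox{Det}(\mathcal{L}^2, J\setminus\{j\})(\pa) = 0$ by \eqref{11fev2}, since the corresponding $r_i\times r_i$ principal minor of a skew-symmetric matrix of rank $< r_i$ has vanishing determinant. Hence $\mathcal{Y}_J^i(\pa) = 0$.

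The main obstacle is not really mathematical but organisational: all essential content is already packaged in Propositions \ref{PROPL2} and \ref{PROPPfaffianBasis}, and the argument is mostly bookkeeping. The subtle point is the careful handling of boundary points in the inductive step for \eqref{21june2}, where one must combine upper semi-continuity of $\mathfrak{d}$ with the discrete structure of its values to verify that the rank of $\mathcal{L}^2$ strictly drops below $r_i$ everywhere on $\mathcal{C}^i$, including on the boundary of the iteratively constructed closed sets.
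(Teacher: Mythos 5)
Your treatment of \eqref{21june1} is correct and is exactly the route the paper intends (the paper states the lemma as a direct consequence of Propositions \ref{PROPL2} and \ref{PROPPfaffianBasis} without further detail): the pointwise identification of $(\vec{h}^1(\pa),\ldots,\vec{h}^m(\pa))$ with the abstract basis, of the Goh matrix with $A$, and of $Z_J$ with $\mathcal{Y}^i_J(\pa)$ is the whole content. Your reduction of \eqref{21june2} to the rank bound $\mbox{rank}(\mathcal{L}^2_{\pa})<r_i$ is also correct, since the coefficients of $\mathcal{Y}^i_J$ are Pfaffians of principal $r_i\times r_i$ minors and $\varphi^2=\mbox{Det}$. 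The case $i=1$ then goes through: $\mathcal{C}^1=\mathfrak{d}^{-1}(\,]d_1,\infty[\,)$ by definition of the global minimum $d_1$.

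The gap is in the inductive step you yourself flag as ``the subtle point'': the assertion that $\mathfrak{d}>d_{i+1}$ on all of $\mathcal{C}^{i+1}$ does not follow from upper semi-continuity and parity, and in fact can fail. Decompose $\mathcal{C}^{i+1}=\mathcal{C}^i\setminus\mathcal{S}_0^{i+1}$ into $\bigl(\mbox{Int}(\mathcal{C}^i)\setminus\mathcal{S}_0^{i+1}\bigr)\cup\bigl(\mathcal{C}^i\setminus\mbox{Int}(\mathcal{C}^i)\bigr)$. On the first piece the minimality of $d_{i+1}$ over $\mbox{Int}(\mathcal{C}^i)$ gives the strict inequality, but on the second piece the only available information is the inductive bound $\mathfrak{d}>d_i$, i.e.\ $\mathfrak{d}\geq d_i+2$. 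A point $\pa\in\partial\mathcal{S}_0^1$ with $\mathfrak{d}(\pa)=d_1+2=d_2$, say, lies in $\mathcal{C}^2$ (it is not in $\mathcal{S}_0^2\subset\mbox{Int}(\mathcal{C}^1)$) yet has $\mbox{rank}(\mathcal{L}^2_{\pa})=r_2$, so some principal $r_2\times r_2$ Pfaffian is nonzero there and $\mathcal{Y}^2_J(\pa)\neq 0$ for some $J$. Upper semi-continuity cannot exclude this: it bounds $\mathfrak{d}$ \emph{near} $\pa$ from above by $\mathfrak{d}(\pa)$, which is consistent with $\mathfrak{d}=d_1$ on the adjacent part of $\mathcal{S}_0^1$; it gives no lower bound $\mathfrak{d}(\pa)>d_2$. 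So for $i\geq 2$ the statement you are trying to prove only holds on $\mbox{Int}(\mathcal{C}^{i-1})\setminus\mathcal{S}_0^{i}$, not on all of $\mathcal{C}^i$, and your induction cannot close. Note that this does not damage the construction in Section \ref{SECProofTHM1_2}: only the case $i=1$ of \eqref{21june2} is actually used there (for $i\geq 2$ the cutoffs $\Psi_i$, which vanish on $\DeltaPerp\setminus\mathcal{S}_0^i$, enforce the vanishing of the generators $\Psi_i\mathcal{Y}^i_J$ outside $\mathcal{S}_0^i$). But as a proof of the lemma as literally stated, the argument for $i\geq 2$ is incomplete, and you should either restrict the claim to $\mbox{Int}(\mathcal{C}^{i-1})$ or supply an argument controlling $\mathfrak{d}$ on the boundary points $\mathcal{C}^{i-1}\setminus\mbox{Int}(\mathcal{C}^{i-1})$.
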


If $\mathcal{C} = \mathcal{C}^{\omega}$, then the first part of (iv) follows from the observation that $\mathcal{S}_0 = \mathcal{S}_0^1$, since the rank of an analytic foliation is locally constant outside its singular set. In order to address the case $\mathcal{C} = \mathcal{C}^{\infty}$, when the rank may change in each open set $\mathcal{S}_0^{i}$, we need to modify the vector fields $\mathcal{Y}_J^{i}$ when $s\geq 2$. In this case, we set $\Psi_1\equiv 1$ and, for each $i\in \{2, \ldots, s\}$, we consider a smooth function $\Psi_i: \DeltaPerp \rightarrow [0,\infty)$ homogeneous with respect to the $p$ variable such that  
\[
\Psi^{-1}_{i}(0) = \DeltaPerp \setminus \mathcal{S}_0^{i}.
\]
Note that each function can be taken to be homogeneous because the sets $\mathcal{S}_0^1, \ldots, \mathcal{S}_0^s$ are invariant by dilation. 
By construction and (\ref{21june1}), we conclude that
\[
\vec{\mathcal{F}}(\pa) = \mbox{ker}\bigl(\omega^{\perp}_{\pa}\bigr)  = \mbox{Span} \Bigl\{ \Psi_i \mathcal{Y}_J^{i}(\pa) \, \vert \, i \in \{1, \ldots,s\}, \, J \in \Lambda_{r_i+1}\Bigr\} \qquad \forall \pa \in \mathcal{S}_0
\]
\[
\mbox{and} \qquad
\vec{\mathcal{F}}(\pa) = \{0\} \qquad \forall \pa \in \Sigma := \DeltaPerp \setminus \mathcal{S}_0,
\]
which proves the first part of (iv) when $\mathcal{C} = \mathcal{C}^{\infty}$.

Next, if $\vec{\mathcal{F}}$ has rank at most $1$, then we have $s=1$, $d_1=1$ and $\mathcal{S}_0=\mathcal{S}_0^1$ (if $s\geq 2$, then the rank of $\vec{\mathcal{F}}$ over $\mathcal{S}_0^2$ would be $d_2>d_1=1$). Hence, we have 
 $r_1=m-d_1=m-1$ which gives $|\Lambda_{r_1+1}| = |\Lambda_m| = 1$ and  implies that $\vec{\mathcal{F}}$ is generated by one vector field. It remains to prove the following:
 
\begin{lemma}\label{CLAIMdivergence}
If $s=1$, then for any $J \in \Lambda_{r_1+1}$ the vector field $\mathcal{Y}_J:=\mathcal{Y}_J^1$ has controlled divergence.
\end{lemma}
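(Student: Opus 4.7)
The plan is to compute $\mbox{div}^{\DeltaPerp}(\mathcal{Y}_J)$ explicitly and exhibit it as $\mathcal{Y}_J(g)$ for some $\mathcal{C}$-function $g$. Write $\mathcal{Y}_J = \sum_{j \in J} \epsilon(J, j)\, \varphi_j\, \vec{h}^j$ with $\varphi_j := \varphi(\mathcal{L}^2, J \setminus \{j\})$. First I would verify that $\mathcal{Y}_J$ is tangent to $\DeltaPerp$ by computing $\mathcal{Y}_J(h^i) = \sum_{j\in J} \epsilon(J,j)\, \varphi_j\, h^{ji}$. When $i \in J$, the sum vanishes as a polynomial identity, because the vector $(\epsilon(J,j)\varphi_j)_{j\in J}$ lies in the kernel of the odd-sized skew-symmetric matrix $H_J := [h^{ab}]_{a,b \in J}$; this is Proposition \ref{PROPPfaffianBasis} applied to $H_J$. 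When $i \notin J$, Proposition \ref{PROPPfaffian}(i) applied to the even-sized set $J \cup \{i\}$ with $i_0 = i$ identifies the sum, up to a nonzero factor and sign, with $\varphi(\mathcal{L}^2, J \cup \{i\})$; the hypothesis $s = 1$ forces $\mbox{rank}(\mathcal{L}^2) \le r_1$ everywhere on $\DeltaPerp$, so this size-$(r_1+2)$ Pfaffian vanishes on $\DeltaPerp$. By Hadamard's lemma, $\mathcal{Y}_J(h^i)$ lies in the ideal $(h^1, \dots, h^m)$ and $\mathcal{Y}_J$ restricts to a bona fide vector field on $\DeltaPerp$.

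Next, since each Hamiltonian vector field $\vec{h}^j$ is divergence-free with respect to the Liouville volume on $T^*M$, one has
\[
\mbox{div}^{T^*M}(\mathcal{Y}_J) = \sum_{j\in J} \epsilon(J,j)\, \vec{h}^j(\varphi_j).
\]
Expanding each $\vec{h}^j(\varphi_j)$ via Proposition \ref{PROPPfaffian}(ii) produces a double sum over pairs $(i',j') \in J\setminus\{j\}$ involving the Poisson double-brackets $\vec{h}^j(h^{i'j'}) = \{h^{i'j'}, h^j\}$. Using the Jacobi identity, each such double-bracket rewrites as $\vec{h}^{i'}(h^{j'j}) + \vec{h}^{j'}(h^{ji'})$. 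I would then reorganize the resulting triple sum by permuting the roles of the indices $j, i', j'$ so as to factor out the outer pattern $\sum_{j\in J}\epsilon(J,j)\varphi_j\,\vec{h}^j(\cdot)$. The target is an expression of the form $\mathcal{Y}_J(g)$; the natural candidate for $g$ is a logarithmic Pfaffian expression, namely a combination of $\log|\varphi(\mathcal{L}^2, K)|$ for even-sized subsets $K \subset \{1, \dots, m\}$ of cardinality $r_1$, which is smooth on $\mathcal{S}_0$ where at least one such Pfaffian is non-vanishing.

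Finally, to pass from $\mbox{div}^{T^*M}$ to $\mbox{div}^{\DeltaPerp}$, I would use local coordinates $(y_1, \dots, y_{2n-m}, h^1, \dots, h^m)$ adapted to $\DeltaPerp$ near a regular point. The discrepancy between the two divergences is the trace of the Hadamard decomposition matrix $(g_{jk})$ of the components $\mathcal{Y}_J(h^j) = \sum_k g_{jk} h^k$, evaluated on $\DeltaPerp$. The entries $g_{jk}$ are controlled by the transverse derivatives of the size-$(r_1+2)$ Pfaffians $\varphi(\mathcal{L}^2, J \cup \{j\})$, which vanish on $\DeltaPerp$ but whose $h$-derivatives do not; via Proposition \ref{PROPPfaffian}(ii) these correction terms are again expressible as $\mathcal{Y}_J$ acting on Pfaffian-type functions, and can therefore be absorbed into $g$. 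The main obstacle is the combinatorial bookkeeping of signs and indices in this chain of manipulations: correctly tracking the $\epsilon$ signs through the Jacobi identity and exhibiting the explicit collapse into the form $\mathcal{Y}_J(g)$, crucially leveraging that $s = 1$ makes all Pfaffian minors of size $\ge r_1 + 2$ vanish identically on $\DeltaPerp$.
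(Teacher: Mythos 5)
Your setup is the same as the paper's: tangency of $\mathcal{Y}_J$ to $\DeltaPerp$ via the Pfaffian identities (odd-size kernel for $i\in J$, vanishing of the size-$(r_1+2)$ Pfaffian on $\DeltaPerp$ for $i\notin J$), the reduction $\mbox{div}^{T^*M}(\mathcal{Y}_J)=\sum_{j\in J}\epsilon(J,j)\,\vec h^j(\varphi_j)$ from $\mbox{div}(\vec h^j)=0$, the expansion via Proposition \ref{PROPPfaffian}(ii), and the Jacobi identity. But the central step is left as a hope, and the target you aim for is the wrong one. After applying Proposition \ref{PROPPfaffian}(ii) one gets a triple sum $\sum_{j\neq k\neq l\in J}\epsilon_{jkl}\,\varphi(\mathcal{L}^2,J\setminus\{j,k,l\})\,h^{jkl}$; the point is that the coefficient $\epsilon_{jkl}\,\varphi(\mathcal{L}^2,J\setminus\{j,k,l\})$ is invariant under cyclic permutation of $(j,k,l)$ while $h^{jkl}+h^{klj}+h^{ljk}=0$ by the Poisson Jacobi identity, so the sum vanishes \emph{identically}: the ambient divergence is exactly zero, and no auxiliary function $g$ is needed at this stage. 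You instead propose to exhibit the sum as $\mathcal{Y}_J(g)$ with $g$ a combination of $\log|\varphi(\mathcal{L}^2,K)|$, $|K|=r_1$. Even if the combinatorics could be made to work, this would not prove the lemma: controlled divergence requires $\mbox{div}^{\DeltaPerp}(\mathcal{Y}_J)\in\mathcal{Y}_J\cdot\mathcal{C}(\tilde{\mathcal{V}})$ with the factor smooth on the \emph{whole} neighborhood, including the singular set $\Sigma$ where all the rank-$r_1$ Pfaffians vanish and your $g$ blows up. The entire purpose of the notion (cf.\ the bound $|\mbox{div}(\mathcal{Z})|\leq K|\mathcal{Z}|$ used in the proof of Theorem \ref{THM2}) is uniform control near $\Sigma$, so a logarithmic $g$ defined only on $\mathcal{S}_0$ is not an admissible conclusion.

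For the passage from the ambient divergence to $\mbox{div}^{\DeltaPerp}$, your Hadamard-matrix/trace picture is the right shape, but the assertion that the transverse corrections ``are again expressible as $\mathcal{Y}_J$ acting on Pfaffian-type functions'' is exactly the nontrivial content and is not justified. The paper handles it by restricting successively to the flag $\mathcal{A}_{\ell}=\{h^1=\dots=h^{\ell}=0\}$ (legitimate because each $h^i$ is a first integral of $\mathcal{Y}_J$ along these sets) and invoking \cite[Proposition B.2]{br18} at each step; some such quoted input, or a genuine computation of the trace term, is needed to close your argument. In summary: the skeleton matches the paper, but the key identity (identical vanishing of the ambient divergence via cyclic symmetry plus Jacobi) is missing, and the substitute you propose would not yield controlled divergence in the required sense.
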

\begin{proof}
Since controlled divergence is invariant by local bi-Lipschitz isomorphism, cf. \cite[Lemma 4.2]{bfpr18}, we can suppose that the metric $g$ is the Euclidean metric on $T^*M$. In this case we claim that $\mbox{div}(\mathcal{Y}_J^1):=\mbox{div}^g(\mathcal{Y}_J^1)=0$ for all $J \in \Lambda_{r_1+1}$. As a matter of fact, let $J \in \Lambda_{r_1+1}$ be fixed. Since each $\vec{h}^i$ is a Hamiltonian vector field, we know that $\mbox{div}(\vec{h}^i) =0$, so (\ref{11fev1}) gives
\[
\mbox{div} \bigl(\mathcal{Y}_J^1\bigr) = \sum_{j\in J} \epsilon(J,j) \cdot \left( \vec{h}^j\cdot \varphi(\mathcal{L}^2,J\setminus\{j\})\right).
\]
Now, from Proposition \ref{PROPPfaffian} (ii) and usual properties of Poisson algebras, we obtain that
\[
\mbox{div}\bigl(\mathcal{Y}_J^1\bigr)
= \frac{r_1}{4}\cdot \sum_{j\neq k\neq l\in J} \epsilon_{jkl}\cdot \varphi \left(\mathcal{L}^2,J\setminus\{j,k,l\}\right)\cdot 
h^{jkl},
\]
where we have used the notation $\epsilon_{jkl} :=
\epsilon(J,j)\cdot \epsilon(J\setminus\{j\},k)\cdot \epsilon(J\setminus\{j,k\},l)$. Note that there holds
\[
e_j \wedge e_k \wedge e_l = e_l \wedge e_j \wedge e_k = e_k \wedge e_l \wedge e_j \qquad \forall j,k,l \in J
\]
from which we conclude that $\epsilon_{jkl}=\epsilon_{ljk}=\epsilon_{klj}$. Therefore, by using Poisson Jacobi identity we infer that
\begin{equation}\label{eq:FirstPart}
\mbox{div}\bigl(\mathcal{Y}_J^1\bigr)= \frac{r_1}{4}\cdot \sum_{j\neq k\neq l\in J} \epsilon_{jkl}\cdot \varphi \left(\mathcal{L}^2,I\setminus\{j,k,l\}\right)\cdot 
h^{jkl} \equiv 0.
\end{equation}
Finally, since by Propositions \ref{PROPL2}, \ref{PROPPfaffianBasis} and the fact that $\vec{h}^j\cdot h^{i}=h^{ji}$ (a standard Poisson computation, see e.g. \cite[(3.6)]{bprfirst}), each $h^{i}$ (with $i=1,\ldots, m$) is a first integral of $\mathcal{Y}_J^1$. 

We now need to consider the divergence of the restricted vector-field $\mathcal{Y}_J^1$ to the submanifold $\Delta^{\perp}$. We argue by induction on the restriction of $\mathcal{Y}_J^1$ to the auxiliary sets 
\[
\mathcal{A}_{\ell} = \{h^{1}=\ldots = h^{\ell} =0\}, \quad \ell \in \{0,,\ldots, r \}, 
\]
where $\mathcal{A}_0$ is the entire space and $\mathcal{A}_{r} = \Delta^{\perp}$ (note that as each $h^i$ is a first integral, the restriction of $\mathcal{Y}_J^1$ to $\mathcal{A}_{\ell}$ is well-defined). The induction claim is that:
\[
\mbox{div}^{\mathcal{A}_{\ell}}\bigl(\mathcal{Y}_J^1\bigr) \in \mathcal{Y}_J^1 \cdot \mathcal{C}(\mathcal{A}_{\ell}).
\]
The base case of $\ell=0$ follows from equation \eqref{eq:FirstPart}, while the inductive step is a direct consequence of \cite[Proposition B.2]{br18}.
\end{proof}

\begin{remark}\label{rk:InvolutiveDistribution}
Suppose that $\mathcal{S}_0 =\mathcal{S}_0^1$, that is, the extra hypothesis of Theorem \ref{THM1}(iv) is satisfied, and consider the module $\mathcal{D}$ of vector-fields generated by $\mathcal{Y}_{I}=\mathcal{Y}_{I}^{1}$ with $I \in \Lambda_{r+1}$ and their Lie-brackets. Then $\mathcal{D}$ is involutive and generates the same singular distribution $\vec{\mathcal{F}}$. Moreover, $\mathcal{D} = \mbox{Span}\{\mathcal{Y}_{\alpha} ; \alpha \in \Gamma \}$, where $\Gamma$ is a countable index-set, and $\mathcal{Y}_{\alpha}$ is either equal to $\mathcal{Y}_I$ for some $I \in \Lambda_{r+1}$, or can be obtained from a finite number of their Lie-brackets. Note that $\Gamma$ may always be chosen finite when $\mathcal{C} = \mathcal{C}^{\omega}$, by Noetherianity. Finally, every $\mathcal{Y}_{\alpha}$ has controlled divergence. Indeed, it is enough to add the following argument to Lemma \ref{CLAIMdivergence} above: given two vector fields $X$ and $Y$ such that $\mbox{div}(X) = \mbox{div}(Y)=0$, then $\mbox{div}([Y,X])=0$. Indeed, denoting by $\mbox{vol}$ the volume form associated to the Euclidean metric $g$, we conclude from Cartan's formula that:
\[
\mbox{div}([X,Y])\,\mbox{vol} = d\left( i_{[X,Y]} \mbox{vol}\right) = \mathcal{L}_{[X,Y]}\mbox{vol} =  \mathcal{L}_{X}\mathcal{L}_{Y}\mbox{vol} - \mathcal{L}_{Y} \mathcal{L}_{X}\mbox{vol} =0
\]
since $\mathcal{L}_{X}\mbox{vol}=\mbox{div}(X)\,\mbox{vol} =0$ and $\mathcal{L}_{Y}\mbox{vol}=\mbox{div}(Y)\,\mbox{vol}=0$.
\end{remark}

\begin{remark}\label{rk:SimpleCaseSigma}
In general, the set $\Sigma$ can have positive measure in $\DeltaPerp$, cf. $\S\S$\ref{ex:BadSigma}. If $\mbox{rank}(\Delta) \leq 3$, nevertheless, then the set $\Sigma$ is always a rectifiable set of Lebesgue measure zero, and the rank of $\mathcal{L}^2$ is always maximal outside of $\Sigma$. 

Indeed, apart from changing the set of generators of $\Delta$, we may suppose that $X^k = \partial_{x_k} + \sum_{i=m+1}^{n} A_i^k(x) \partial_{x_i}$. Now, from the non-holonomicity, there is a function $h^{ij} = [X^i,X^j] \cdot p$ whose Taylor expansion at $(x,p)$ is non-zero when restricted to $\DeltaPerp$, for all $(x,p)\in \DeltaPerp$. This implies that at every $\pa \in \DeltaPerp$, there exists at least one Pfaffian of a $2\times 2$ minor of $H$ which is formally non-zero at $\pa$. Since $H$ is at most a $3\times 3$ anti-symmetric matrix, its rank is at most $2$. We conclude from Lemma \ref{lem:RectifiableFormal} below.
\end{remark}

\subsection{Proof of assertion (v)}

The proof of Theorem \ref{THM1}(v) proceeds by transversality. If we cover $M$ by countably many chart $\varphi_i : \mathcal{D}\rightarrow M$ where $\mathcal{D}$ is an open ball in $\R^n$ centered at the origin, then it is sufficient to show that the set of totally nonholonomic distributions on each $\varphi_i(\mathcal{D})$ satisfying the conclusion of Theorem \ref{THM1}(v) is generic. Moreover, any smooth distribution on $\mathcal{D}$ can be extended to $\R^n$ and can be generated globally by families of $m$ smooth vector fields (see \cite{riffordbook,sussmann08}). So, we can assume from now on that  $M=\R^n$ and aim to show that for generic families of linearly independent and bracket-generating vector fields $X^1, \ldots, X^m$ in $\R^n$, the distribution $\Delta =\mbox{Span}\{X^1, \ldots, X^m\}$ satisfies the desired properties over  $\R^n$.

\subsubsection{Transversality theory.} We recall here the definition of jets of vector fields in $\R^n$ and introduce some notations, we refer the reader to the textbooks \cite{em02,gg73} for further details on Transversality Theory.

Let $d$ a nonnegative integer be fixed, any real-valued function $f$ smooth  in a neighborhood of some $\bar{x} \in \R^n$ admits a Taylor expansion up to order $r$ at $\bar{x}$, that is, it can be written as 
\[
f(x) \underset{\bar{x}, d}{\simeq} f(\bar{x}) +  \sum_{k=1}^d \sum_{\alpha \in I_k} \frac{1}{\alpha!} \, \partial^k_{\alpha} f \left( \bar{x}\right) \left( x-\bar{x}\right)^{\alpha}, 
\] 
where the symbol $\simeq$ with $\bar{x}, d$ below means that the function in the $x$ variable given by the difference between the left-hand side and the right-hand side has order $>d$ at $\bar{x}$, where for each $k\in \{1, \ldots, d\}$ the set $I_k$ denotes the set of multi-indices $\alpha =(\alpha_1, \ldots, \alpha_k)$ with $ \alpha_1, \ldots, \alpha_k \in \{1, \ldots,n\}$ and $\alpha_1 \leq \ldots \leq \alpha_k$, and where for each multi-index $\alpha =(\alpha_1, \ldots, \alpha_k)$ we set
\[
\partial^k_{\alpha}f(\bar{x}) := \frac{\partial^k f}{\partial x_{\alpha}} \left( \bar{x}\right) = \frac{\partial^kf}{\partial x_{\alpha_1} \cdots \partial x_{\alpha_k}} \left( \bar{x}\right) \quad \mbox{and} \quad  \left( x-\bar{x}\right)^{\alpha} := \Pi_{i=1}^k \left(x_{\alpha_i}-\bar{x}_{\alpha_i}\right).
\]
Denote by $|I_k|$ the cardinality of $I_k$ for all integer $k\geq 1$. Then, the $d$-th Taylor expansion at $\bar{x}$  of such  function $f$ can be encoded by a tuple
\[
\left( \bar{x}, f(\bar{x}), D^1 f\left(\bar{x} \right), \cdots, D^d f \left(\bar{x} \right)  \right)
\]
in the set 
\[
 J^d \left(\R^n,\R \right):= \R^n \times \R \times \R^{|I_1|} \times \cdots \times \R^{|I_d|}
\]
where $\bar{x}$ is the origin of the expansion and for every $k\in \{1, \ldots,d\}$,  $D^kf(\bar{x})$ is the tuple in $\R^{|I_k|}$  given by
\[
D^k f\left(\bar{x} \right) = \left( \partial^k_{\alpha} f \left(\bar{x} \right)\right)_{\alpha \in I_k}. 
\]
The set $J^d(\R^n,\R)$ is the set of $d$-jets of smooth function from $\R^n$ to $\R$. To each smooth function $f:\R^n \rightarrow \R$ can be associated a smooth function, called $d$-jet of $f$, $j^df:\R^n \rightarrow J^d(\R^n,\R)$ defined by 
\[
j^df(x) := \left( x, f(x), D^1f(x), \cdots, D^df (x)  \right) \qquad \forall x \in \R^n.
\]
Now, in order to define the $d$-jets of smooth vector fields in $\R^n$, we can set 
\[
J^d \left(\R^n, \R^n\right) :=  \R^n \times \R^n \times \left( \R^{|I_1|}\right)^n \times \cdots \times \left( \R^{|I_d|}\right)^n
\]
and define for every smooth vector field $Y$ in $\R^n$ the $d$-jet $j^dY:\R^n \rightarrow J^d(\R^n,\R^n)$ by
\[
j^dY(x) := \left( x, Y(x), D^1Y (x), \cdots, D^d Y (x)  \right) \qquad \forall x \in \R^n.
\]
where each $D^lY(x)$ has $n$ coordinates $D^lY_1 (x), \ldots D^nY_1 (x)$. Finally,  given a family $X=(X^1,\ldots,X^m)$  of smooth vector fields in $\R^n$, we define its $d$-jet $j^d X:\R^n \rightarrow \mathcal{J}^r$ for every $x\in \R^n$ by
\[
j^dX(x) := \left( x, X(x),  \left( D^1X^j(x)\right)_{j=1, \ldots,m}, \cdots, \left( D^dX^j(x)\right)_{j=1, \ldots,m} \right),
\]
where the set of $d$-jets of families of $m$ smooth vector fields is defined by
\[
\mathcal{J}^d_m :=  \R^n \times \R^{n\times m}  \times \left( \R^{|I_1|}\right)^{n\times m} \times \cdots \times \left( \R^{|I_d|}\right)^{n \times m}.
\]

\subsubsection{Formal Goh matrix.}

Set $d \geq n+2$ and fix the coordinate system $x=(x_1,\ldots,x_n)$ over $\mathbb{R}^n$ and a point $\bar{x} \in \mathbb{R}^n$. Without loss of generality, we suppose that $\bar{x}=0$. Denote by $T : C^{\infty}(\mathbb{R}^n,\mathbb{R}) \to \mathbb{R} \lb x \rb$ and $T^{m}: C^{\infty}(\mathbb{R}^n,\mathbb{R}^m) \to (\mathbb{R} \lb x \rb)^m$ the Taylor expansion mappings at $\bar{x}$. Recall that $T$ and $T^m$ are surjective mappings by Borel's Theorem (see {\it e.g.} \cite[1.5.4]{nar68}). We work formally over $\bar{x}$, essentially motivated by the following observation (we recall that a subset of $\R^n$ is said to be smoothly countably $(n-1)$-rectifiable if it can be covered by countable many smooth submanifolds of $\R^n$ of codimension $1$):

\begin{lemma}\label{lem:RectifiableFormal}
If $f \in C^{\infty}(\mathbb{R}^n,\mathbb{R})$ is such that $T(f) \not\equiv 0$, then there exists a neighborhood $V$ of $\bar{x}$ such that the set $\{x\in U;\, f(x)=0\}$ is smoothly countably $(n-1)$-rectifiable.
\end{lemma}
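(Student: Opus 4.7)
My plan is to stratify the zero set $\{f=0\}$ locally by the order of vanishing of $f$ and apply the regular value theorem on each stratum. Since $T(f)\not\equiv 0$, there is a smallest integer $k\ge 0$ for which some partial derivative $\partial^{\alpha^{\ast}}f$ of order $|\alpha^{\ast}|=k$ satisfies $\partial^{\alpha^{\ast}}f(\bar x)\neq 0$. If $k=0$, then $f(\bar x)\neq 0$ and $\{f=0\}$ is empty near $\bar x$, so the conclusion is trivial. Otherwise, I shrink to a neighborhood $V$ of $\bar x$ on which $\partial^{\alpha^{\ast}}f$ does not vanish, and I introduce, for $l=1,\ldots,k+1$, the closed sets
\[
E_l:=\bigl\{x\in V\,:\,\partial^{\beta}f(x)=0\ \text{for every }|\beta|\le l-1\bigr\}.
\]
Then $E_1=\{f=0\}\cap V$ and $E_{k+1}=\varnothing$ (since $\partial^{\alpha^{\ast}}f$ is nowhere zero on $V$), so
\[
\{f=0\}\cap V=\bigsqcup_{l=1}^{k}\bigl(E_l\setminus E_{l+1}\bigr).
\]

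It therefore suffices to cover each layer $E_l\setminus E_{l+1}$ by smooth hypersurfaces. The key observation is that if $x\in E_l\setminus E_{l+1}$, then there exists a multi-index $\alpha$ with $|\alpha|=l$ and $\partial^{\alpha}f(x)\neq 0$. Writing $\alpha=\alpha'+e_j$ with $|\alpha'|=l-1$, the function $g_{\alpha'}:=\partial^{\alpha'}f$ vanishes at $x$ (because $x\in E_l$) while $\partial_{x_j}g_{\alpha'}(x)=\partial^{\alpha}f(x)\neq 0$. The regular value theorem then ensures that
\[
N_{\alpha'}:=\bigl\{y\in V\,:\,g_{\alpha'}(y)=0,\ \nabla g_{\alpha'}(y)\neq 0\bigr\}
\]
is a smooth $(n-1)$-submanifold of $V$ containing $x$. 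Since there are only finitely many multi-indices $\alpha'$ of order $l-1$, one obtains $E_l\setminus E_{l+1}\subset\bigcup_{|\alpha'|=l-1}N_{\alpha'}$, a finite union of smooth hypersurfaces.

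Putting the strata together, $\{f=0\}\cap V$ is a finite union of smooth $(n-1)$-submanifolds of $V$, which a fortiori exhibits it as smoothly countably $(n-1)$-rectifiable. I do not anticipate any real obstacle: the argument reduces to the elementary observation that, at any point where $f$ vanishes to order exactly $l$, the derivative of $f$ \emph{one order below} the first non-vanishing one is already a regular-value zero, so the implicit function theorem directly produces the required hypersurface. The only care needed is to verify that the stratification is finite, which is where the choice of $V$ (forcing $\partial^{\alpha^{\ast}}f\neq 0$ throughout) plays its role.
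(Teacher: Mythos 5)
Your argument is correct, and it establishes the lemma by a genuinely different route from the paper. The paper first applies the Malgrange preparation theorem to write $f$, after a linear change of coordinates and up to a unit, as a distinguished polynomial of degree $d$ in $x_n$, and then runs an induction on $d$ via the dichotomy $\{f=0\}=\bigl(\{f=0\}\setminus\{\partial_{x_n}f=0\}\bigr)\cup\bigl(\{f=0\}\cap\{\partial_{x_n}f=0\}\bigr)$: the first piece is a smooth hypersurface by the implicit function theorem, and the second is absorbed into the zero set of $\partial_{x_n}f$, which is again of prepared form of one degree less. Your proof avoids the preparation theorem entirely by stratifying the zero set according to the exact order of vanishing; the key observation --- that at a point where all derivatives of order $\le l-1$ vanish but some $\partial^{\alpha}f$ with $|\alpha|=l$ does not, the function $\partial^{\alpha'}f$ with $\alpha=\alpha'+e_j$ has that point as a regular-value zero --- is precisely what makes the implicit function theorem applicable on each stratum, and the choice of $V$ guarantees the stratification terminates at the vanishing order $k$ of $T(f)$ at $\bar x$. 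This is more elementary (only the implicit function theorem is invoked), and like the paper's argument it produces a \emph{finite} cover by smooth hypersurfaces. The one cosmetic slip is in your closing sentence: $\{f=0\}\cap V$ is \emph{contained in}, not equal to, a finite union of smooth $(n-1)$-submanifolds (for $f=x_1^2+x_2^2$ the zero set is a point, not a curve); since rectifiability is a covering condition, the inclusion $E_l\setminus E_{l+1}\subset\bigcup_{|\alpha'|=l-1}N_{\alpha'}$ that you actually proved is exactly what is required.
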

\begin{proof}
By the Malgrange preparation Theorem (see {\it e.g.} \cite[Th. 7.5.5]{Hormander}), which we can always apply after a linear coordinate change, there exists a neighborhood $V$ of $\bar{x}=0$, $d\in \mathbb{N}$ and $C^{\infty}$-functions $U(x)$ and $a_k(x_1,\ldots,x_{n-1})$, $k=0,\ldots,d-1$, where $a_k(0)=0$ and $U(x)\neq 0$ for all $x\in V$, such that
\[
f_{|V}(x) = U(x)\left( x_n^d + \sum_{k=0}^{d-1} a_k(x_1,\ldots,x_{n-1}) x_n^{k} \right) \qquad \forall x=(x_1,\cdots, x_n) \in V.
\]
Since we are interested in the zero locus of $f$, we may assume without loss of generality that $U(x) =1$. The result now follows by induction on $d$; the case $d=1$ being clear, assume the result proven for $d-1$. First, by the implicit function Theorem the set $\{f=0\} \setminus \{\partial_{x_n} f=0\}$ is rectifiable in $V$. Second, the zero set of the derivative $\{\partial_{x_n} f=0\}$ is rectifiable over $V$ by induction. We conclude easily. 
\end{proof}

Now, we consider the fiber of the projection $\mathcal{J}^r_m \to \mathbb{R}^n$ over $\bar{x}$, which we denote by $\mathcal{J}^d_m(\bar{x})$. We note that the Taylor expansion mapping $T^m$ (or $T$) commutes with the $d$-jet mapping, that is, if we denote by $j^d_{\bar{x}} :  (\mathbb{R} \lb x \rb)^m \to \mathcal{J}^d_m(\bar{x}) $ the corresponding $d$-jet, then we have
\[
T^m \bigl(j^d f\bigr) = j^d_{\bar{x}} \bigl( T^m(f) \bigr) \qquad  \forall f\in  C^{\infty}(\mathbb{R}^n,\mathbb{R}^m).
\]
Let us denote by $\mathcal{D}$ the set of formal vector fields over $\bar{x}$; note that $X \in \mathcal{D}$ means that $X = \sum_{i=1}^n A_{i}(x) \partial_{x_i}$ where $A_{i}(x) \in \mathbb{R} \lb x \rb$. We note that the Taylor expansion $T$ above extends to a surjective function from $\mbox{Der}_{\mathbb{R}^n}$ to $\mathcal{D}$ which commutes with $j^d$. In what follows, we consider $m$-tuples $\widehat{X} = (X^1,\ldots,X^m) \in \mathcal{D}^m$ satisfying an extra property. In fact, we consider an open and dense set $\mathcal{U}^d_m \subset \mathcal{J}^d_m(\bar{x})$ such that, for every $\widehat{X} \in \mathcal{D}^m$ such that $j^d_{\bar{x}}(\widehat{X}) \in \mathcal{U}^d_m(\bar{x})$, we have that $X^1(\bar{x}),\ldots,X^{m}(\bar{x})$ are linearly independent vectors and from now on we consider $m$-tuples $\widehat{X}$ in the set $\mathcal{D}_{LI}$ (where LI stands for linearly independent) defined by 
\[
\mathcal{D}_{LI} := \bigl(j^d_{\bar{x}}\bigr)^{-1} \left(\mathcal{U}^d_m(\bar{x})\right).
\]
By using the canonical coordinates $(x,p)$ over $T^{\ast}\mathbb{R}^n$ (with the canonical projection $\pi: T^{\ast}\mathbb{R}^n \to \mathbb{R}^n$), given $\widehat{X} = (X^1,\ldots,X^m) \in  \mathcal{D}_{LI}$, we consider the functions $h^1, \ldots, h^m$ in $\mathbb{R} \lb x\rb [p]$ defined by 
\[
h^k := p \cdot X^k, \qquad \forall k=1, \ldots,m,
\]
where we recall that $\mathbb{R} \lb x\rb [p]$ stands for polynomials in $p$ whose coefficients are formal power series in $x$ (in particular, each $h^k$ is $1$-homogeneous in $p$). We define the $m\times m$ matrix $\mathcal{L}^2_{\widehat{X}}$ over $\mathbb{R} \lb x \rb [p]$ by
\[
\mathcal{L}^2_{\widehat{X}}:= \left[ p\cdot [X^i,X^j]  \right]_{1\leq i,j \leq m} = \left[ h^{ij} \right]_{1\leq i,j \leq m},
\]
where $h^{ij} \in \mathbb{R} \lb x \rb [p]$ are formal power series, and study its rank modulo the ideal 
\[
\mathcal{I}_{\widehat{X}} = \mbox{Span}\left(h^1,\ldots,h^m\right) \subset \mathbb{R} \lb x \rb [p].
\]
Indeed, recall that we want to study the rank of the Goh matrix when restricted to $\DeltaPerp$. When $\widehat{X}$ is convergent, then $\DeltaPerp$ corresponds to the zero set of $\mathcal{I}_{\widehat{X}}$; but even if $\widehat{X}$ is not convergent, the ideal $\mathcal{I}_{\widehat{X}}$ is well-defined, providing us the precise algebraic counterpart of a ``germ of a formal set", which is not defined in this paper. Now, studying the restriction of functions defined in the cotangent bundle to $\DeltaPerp$ corresponds to considering functions of the cotangent bundle quotient-out by $\mathcal{I}_{\widehat{X}}$, that is, over the ring $\mathbb{R} \lb x \rb [p]/\mathcal{I}_{\widehat{X}}$. Therefore, we are interested in the function $\mathcal{R} : \mathcal{D}_{LI} \rightarrow \N$ defined by
\[
\mathcal{R}(\widehat{X}) = \mbox{rank}_{ \mathbb{R} \lb x \rb [p]/\mathcal{I}_{\widehat{X}}} \bigl(\mathcal{L}^2_{\widehat{X}}\bigr) \qquad \forall \hat{X} \in \mathcal{D}_{LI},
\]
where we recall that the rank over a principal domain $A$ is defined as the dimension of the associated mapping between $\mbox{Frac}(A)$-vector-spaces, where $\mbox{Frac}(A)$ is the field of fractions of $A$. Note that $\mathcal{R}(\widehat{X})$ is well-defined since, for $\widehat{X} \in \mathcal{D}_{LI}$, the ideal $\mathcal{I}_{\widehat{X}}$ is prime (heuristically, for $\widehat{X} \in \mathcal{D}_{LI}$, the ``formal set" $\DeltaPerp$ associated to $\widehat{X}$ is irreducible; it is actually even smooth) and, therefore, the quotient $\mathbb{R} \lb x \rb [p]/\mathcal{I}_{\widehat{X}}$ is a principal domain. We prove that:

\begin{proposition}\label{prop:TechnicalGenericity}
There exists an open dense set $\mathcal{G}(\bar{x}) \subset \mathcal{J}^d_m(\bar{x})$, whose complement is a semi-algebraic set of codimension $n+1$, such that, for every $\widehat{X} \in \mathcal{D}_{LI}$ such that $j^d_{\bar{x}}( \widehat{X}) \in \mathcal{G}(\bar{x})$, we have that $\mathcal{R}(\widehat{X})$ is maximal, that is, if $m$ is even then $\mathcal{R}(\widehat{X}) = m $ and if $m$ is odd then $\mathcal{R}(\widehat{X}) = m - 1$.
\end{proposition}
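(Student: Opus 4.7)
My plan is to reduce to normalized coordinates, identify the maximality of $\mathcal R(\widehat X)$ with the non-vanishing of a single Pfaffian in the quotient ring, construct an explicit example showing the good locus is non-empty, and then establish the codimension bound by exhibiting $n+1$ polynomial conditions on the jet with linearly independent differentials.

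Since $\widehat X\in\mathcal D_{LI}$, a linear change of coordinates ensures $X^k(\bar x)=\partial_{x_k}|_{\bar x}$ for $k\leq m$; then the formal implicit function theorem permits replacing $(X^1,\ldots,X^m)$ by an $R$-linear combination with invertible leading $R$-matrix (which alters neither $\mathcal I_{\widehat X}$ nor $\mathcal R(\widehat X)$), normalizing to $X^k=\partial_{x_k}+\sum_{l>m}A^k_l(x)\,\partial_{x_l}$ with $A^k_l \in R$. Elimination of $p_1,\ldots,p_m$ identifies $R[p]/\mathcal I_{\widehat X}$ canonically with $R[p_{m+1},\ldots,p_n]$, and since $[X^i,X^j](x_k)=0$ for $k\leq m$ one computes
\[
h^{ij}\equiv\sum_{l=m+1}^n B^{ij}_l(x)\,p_l\pmod{\mathcal I_{\widehat X}}, \qquad B^{ij}_l:=[X^i,X^j](x_l).
\]
Because $\mathcal L^2_{\widehat X}$ is anti-symmetric of size $m$, its rank over $\mbox{Frac}(R[p_{m+1},\ldots,p_n])$ is even and at most $r:=2\lfloor m/2\rfloor$; by \eqref{11fev2} and Proposition \ref{PROPPfaffianBasis}, the maximality $\mathcal R(\widehat X)=r$ is equivalent to the non-vanishing of the Pfaffian $\mu_{\widehat X,J}:=\varphi(\mathcal L^2_{\widehat X},J)$ as an element of $R[p_{m+1},\ldots,p_n]$ for some $J\in\Lambda_r$.

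To ensure the good locus is non-empty, I would take $A^k_l\equiv 0$ for $l\neq l_0$ and $A^k_{l_0}(x)=\sum_{i=1}^m c^k_i\,x_i$ linear, with constants $c^k_i$ chosen so that the constant anti-symmetric matrix $[c^j_i-c^i_j]_{i,j\in J_0}$ has non-zero Pfaffian (any anti-symmetric matrix of this size is realized this way). Then $B^{ij}_l\equiv 0$ for $l\neq l_0$ and $B^{ij}_{l_0}=c^j_i-c^i_j$, giving $\mu_{\widehat X_0,J_0}=\pm p_{l_0}^{r/2}\,\varphi([c^j_i-c^i_j]_{i,j\in J_0})\neq 0$. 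Consequently the bad locus
\[
\mathcal B(\bar x):=\bigl\{\,j^d_{\bar x}\widehat X \, : \, \mu_{\widehat X,J}\equiv 0 \text{ in } R[p_{m+1},\ldots,p_n] \text{ for every } J\in\Lambda_r\,\bigr\}
\]
is a proper subset of $\mathcal J^d_m(\bar x)$. It is defined by the simultaneous vanishing of all Taylor coefficients $[x^\alpha]\,\mu_{\widehat X,J}$ at $\bar x$, each of which is a polynomial in the jet coordinates, and Noetherianity of the coordinate ring of $\mathcal J^d_m(\bar x)$ reduces this to finitely many polynomial equations, so $\mathcal B(\bar x)$ is semi-algebraic.

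The heart of the argument is proving $\mbox{codim}(\mathcal B(\bar x))\geq n+1$. To this end I would show that at $\widehat X_0$ the $(n+1)$ polynomial maps $Q_0:=\mu_{\widehat X,J_0}(\bar x,\cdot)$ and $Q_i:=\partial_{x_i}\mu_{\widehat X,J_0}(\bar x,\cdot)$ for $i=1,\ldots,n$, from $\mathcal J^d_m(\bar x)$ to $\R[p_{m+1},\ldots,p_n]_{\leq r/2}$, have linearly independent differentials, so that $\mathcal B(\bar x)\subset\{Q_0=0,\ldots,Q_n=0\}$ locally has codimension at least $n+1$. I would restrict perturbations to a single component $A^{k_0}_{l_0}$, writing $\delta A^{k_0}_{l_0}=\eta_0\,x_{j_0}+\sum_{i=1}^n \eta_i\,x_{j_0}x_i$ for a fixed auxiliary index $j_0\leq m$; combining the cofactor formulas of Proposition \ref{PROPPfaffian}(i)-(ii) with the explicit form of $\widehat X_0$ reduces the resulting $(n+1)\times(n+1)$ Jacobian of $(Q_0,Q_1,\ldots,Q_n)$ with respect to $(\eta_0,\eta_1,\ldots,\eta_n)$ to triangular form with non-zero diagonal entries, each being a Pfaffian of a fixed sub-minor of $[c^j_i-c^i_j]_{i,j\in J_0}$ times a monomial in $p_{l_0}$ and in the $c^k_i$, non-zero by our choice of $\widehat X_0$. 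The main obstacle is precisely this independence: one must select the perturbation directions so that cancellations coming from Pl\"ucker-type identities among Pfaffians of related minors do not occur. Restricting perturbations to a single pair $(k_0,l_0)$ makes the dependence of $\mu_{\widehat X,J_0}$ on $(\eta_0,\ldots,\eta_n)$ affine, and the block structure of $\widehat X_0$ (in which only the index $l_0$ is active) decouples the contributions of the different $\eta_i$ at the level of cofactors, which is how I would rule out such accidental cancellations.
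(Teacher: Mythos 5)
Your overall skeleton (reduction to normal form, elimination of $p_1,\ldots,p_m$, characterization of maximal rank by non-vanishing of a Pfaffian minor, an explicit witness with linear coefficients, then a transversality count) matches the paper's strategy, and the non-emptiness and semi-algebraicity steps are fine. But the codimension argument, which you correctly identify as the heart of the matter, has a genuine gap.

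You propose to show that the $n+1$ maps $Q_0=\mu_{\widehat X,J_0}(\bar x,\cdot)$ and $Q_i=\partial_{x_i}\mu_{\widehat X,J_0}(\bar x,\cdot)$ have linearly independent differentials \emph{at the single point} $\widehat X_0$, and to deduce that $\{Q_0=\cdots=Q_n=0\}\supset\mathcal B(\bar x)$ has codimension at least $n+1$. This inference fails: a set cut out by $n+1$ polynomial equations need not have codimension $n+1$; it can contain components of much smaller codimension wherever the differentials degenerate (compare $\{x_1y=0,\,x_2y=0\}\supset\{y=0\}$ in $\R^3$). Worse, your witness $\widehat X_0$ lies in the \emph{good} locus ($Q_0(\widehat X_0)\neq 0$ since the Pfaffian of $[c^j_i-c^i_j]$ is a non-zero constant times $p_{l_0}^{r/2}$), so independence of the differentials there says nothing about the local structure of $\mathcal B(\bar x)$ at its own points, which is where the codimension must be controlled. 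To make a transversality count work you must verify the submersion property at (essentially) \emph{every} point of the parameter space, not at one.

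This is exactly what the paper's induction on the cardinality of the index set $I$ supplies. There, the relevant map is $Tr(\xi)=\bigl((X^1_\xi)^{k-1}\varphi(\bar{\mathcal L}_\xi,I)_{|p_0}\bigr)_{k=1,\ldots,d}$ with $d\geq n+2$ iterated derivatives along a single direction, and its Jacobian with respect to the coefficients $\gamma_1,\ldots,\gamma_d$ of $x_1^k$ in $A^2_n$ is triangular with determinant $\varphi(\bar{\mathcal L}^2_\xi,I\setminus\{1,2\})_{|p_0}^{d}$ --- a Pfaffian of a \emph{smaller} minor, which by the induction hypothesis is non-zero outside a semi-algebraic set of codimension $n+1$. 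Hence $Tr$ is a submersion generically, and $Tr^{-1}(0)$ has codimension $d\geq n+1$ off a set that is already controlled. Your proposal has no analogue of this inductive non-degeneracy: the diagonal entries of your Jacobian are Pfaffians of sub-minors of the \emph{fixed} matrix $[c^j_i-c^i_j]$, evaluated only at $\widehat X_0$, and you give no mechanism for controlling them at an arbitrary point of $\mathcal B(\bar x)$. Without such an induction (or some other global argument), the codimension bound $n+1$ --- which is precisely what makes the jet of a generic family avoid the bad set via Thom transversality --- is not established.
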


\subsubsection{Reduction of Theorem \ref{THM1}(v) to Proposition \ref{prop:TechnicalGenericity}.}

Let $\bar{x} \in M$ be fixed, and $U \subset M$ be a connected open neighborhood of $\bar{x}$ which admits a globally defined coordinate system $x=(x_1,\ldots,x_n)$. Let us consider the set $\mathcal{G} = U\times \mathcal{G}(\bar{x}) \subset \mathcal{J}^d_m$, where $\mathcal{G}(\bar{x})$ is given by Proposition \ref{prop:TechnicalGenericity}. Note that $\mathcal{G}$ is a semi-algebraic set of codimension $n+1$ in $\mathcal{J}^d_m$. By Thom's Transversality Theorem (see {\it e.g.} \cite[Theorem 4.9]{gg73}), the set of vector fields $X \in C^{\infty}(U,\R^n)^m$ such that $j^dX(U)$ is transverse to $\mathcal{G}$ is a residual set of $C^{\infty}(\R^n,\R^n)^m$ (in the smooth topology). In particular, since any $j^dX(U)$ is a smooth graph over $U$ in $\mathcal{J}^d_m$, it has dimension $n$, and since $\mathcal{G}$ has codimension $n+1$, then the set of $X \in C^{\infty}(U,\R^n)^m$ for which $j^d X(\R^n)$ does not intersect $\mathcal{G}$ is generic in  $C^{\infty}(U,\R^n)^m$. More precisely, there is an open and dense set $\mathcal{O}(U) \subset C^{\infty}(U,\R^n)^m$ for which $j^{d}(X) \cap \mathcal{G} = \emptyset$ for all $X \in \mathcal{O}(U)$. Since being totally nonholonomic is an open and dense property in $C^{\infty}(U,\R^n)^m$, we may as well suppose that $X = (X^1,\ldots,X^m)$ generates a totally nonholonomic distribution for every $X$ in $\mathcal{O}(U)$. 

Next, we fix $X \in \mathcal{O}(U)$ and suppose that $m$ is even; the odd case follows from a similar argument. Denoting by $\widehat{X}_{x} = T_x(X)$ the formal expansion of $X$ at $x\in U$, we notice that, since $X$ belongs to $\mathcal{O}(U)$, the rank of the operator $\mathcal{L}^2_{\widehat{X}_{x}}$ is maximal equal to $m$ for any $x \in U$. Therefore, for every $\pa \in T^*U \cap \Delta^{\perp}$  the Taylor expansion of $\varphi(\mathcal{L}^2)$ at $\pa$ is a non-identically zero formal power series. By Lemma \ref{lem:RectifiableFormal}, we infer that the zero set of $\varphi(\mathcal{L}^2)$ is  smoothly countably $(2n-m-1)$-rectifiable and we conclude by noting that this set coincides with $\Sigma$ (see the proof of (iv) in Section \ref{SECProofTHM1_2}). 

\subsubsection{Proof of Proposition \ref{prop:TechnicalGenericity}.} Without loss of generality, we may suppose that $\bar{x}=0$ and $x=(x_1,\ldots,x_n)$ is centered at $\bar{x}$.
Consider the set $\mathcal{D}_{NF} \subset \mathcal{D}_{LI}$ (where $NF$ stands for ``normal form") of formal vector fields $\widehat{X} = (X^1,\ldots, X^m)$ of the form
\[
X^{k} = \partial_{x_k} + \sum_{i=m+1}^n A^k_i(x) \partial_{x_i}, \quad A^k_i(0)=0 \qquad i=m+1,\ldots,n, \quad k=1,\ldots,m,
\] 
where $A^k_i \in \mathbb{R} \lb x \rb$, and denote by $\mathcal{W}^d_m$ the space of $d$-jets associated to them. We have
\[
\mathcal{W}^d_m  = \left( \R^{|I_1|}\right)^{(n-m)\times m} \times \cdots \times \left( \R^{|I_d|}\right)^{(n-m) \times m}
\]
and we note that for every $\xi \in \mathcal{W}^d_m$ we may consider the unique element $\hat{X}_{\xi}=(X^1_{\xi},\ldots, X^m_{\xi}) \in \mathcal{D}_{NF}$ such that $j^{d+k}(\hat{X}_{\xi}) =\xi$ for every nonnegative integer $k$. We start by showing that it is enough to prove Proposition \ref{prop:TechnicalGenericity} over $\mathcal{W}^d_m$:

\begin{lemma}\label{lem:ChangeToNormalForm}
There exists a surjective map $\widehat{\psi} : \mathcal{D}_{LI} \to \mathcal{D}_{NF}$ and a surjective semi-algebraic map $\psi : \mathcal{U}^d_m \to \mathcal{W}^d_m$ such that $j^d_{\bar{x}} \circ \widehat{\psi} = \psi \circ j^d_{\bar{x}}$ and:
\begin{itemize}
\item[(i)] For every semi-algebraic set $Z \subset \mathcal{W}^d_m$ of codimension $s$ the set $\psi^{-1}(Z) \subset \mathcal{J}^d_m(\bar{x})$ is a semi-algebraic set of codimension $s$. 
\item[(ii)] For all $\widehat{X} \in \mathcal{D}_{LI}$, we have that $\mathcal{R}(\widehat{X}) = \mathcal{R}(\widehat{\psi}(\widehat{X}))$.
\end{itemize}
\end{lemma}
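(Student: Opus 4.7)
The plan is to construct $\widehat{\psi}$ by an explicit normal-form reduction combining a linear coordinate change in $\R^n$ with a change of generators of the distribution. Given $\widehat{X}=(X^1,\ldots,X^m)\in\mathcal{D}_{LI}$, write $X^k=\sum_{i=1}^n A_i^k(x)\partial_{x_i}$; the hypothesis $j^d_{\bar{x}}(\widehat{X})\in\mathcal{U}^d_m$ says that the $n\times m$ matrix $A(0)=[A_i^k(0)]$ has rank $m$. I would stratify $\mathcal{U}^d_m$ into open semi-algebraic pieces indexed by the lexicographically first $m$-subset $I\subset\{1,\ldots,n\}$ for which the $m\times m$ minor $A_I(0)$ is invertible, and on each stratum define $\widehat{\psi}$ by: (a) permuting the coordinates so that $I=\{1,\ldots,m\}$, splitting $A$ into blocks $A_{top}$ (invertible at $0$) and $A_{bot}$; (b) applying the unipotent linear change on $\R^n$ sending $\partial_{x_i}\mapsto\partial_{x_i}-\sum_{k=1}^m(A_{bot}(0)A_{top}(0)^{-1})_{i-m,k}\partial_{x_k}$ for $i>m$, so that the new bottom block vanishes at the origin; (c) right-multiplying the generators by $A_{top}(x)^{-1}$, yielding $Y^k=\partial_{x_k}+\sum_{i>m}\widetilde{A}_i^k(x)\partial_{x_i}$ with $\widetilde{A}_i^k(0)=0$. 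The map $\psi$ on $d$-jets is defined by performing the same operations modulo $\langle x\rangle^{d+1}$; since inversion of $A_{top}$ modulo this ideal is a polynomial operation in the finitely many coefficients of the $d$-jet, $\psi$ is semi-algebraic, and the identity $j^d_{\bar{x}}\circ\widehat{\psi}=\psi\circ j^d_{\bar{x}}$ holds by construction. Surjectivity of both maps is immediate: they restrict to the identity on $\mathcal{D}_{NF}$ and $\mathcal{W}^d_m$ respectively.

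Assertion (ii) would follow from checking that each of the three steps preserves $\mathcal{R}$. The linear coordinate changes in (a) and (b) lift to symplectic transformations of $T^\ast\R^n$ that send $\mathcal{I}_{\widehat{X}}$ to $\mathcal{I}_{\widehat{\psi}(\widehat{X})}$ and conjugate the matrix $\mathcal{L}^2_{\widehat{X}}$ accordingly; the generator change in (c) by the invertible matrix $B=A_{top}^{-1}$ leaves $\mathcal{I}_{\widehat{X}}$ invariant (the tuples $(X^1,\ldots,X^m)$ and $(Y^1,\ldots,Y^m)$ generate the same module over $\R\lb x\rb$) and, via the Leibniz rule for the Lie bracket, transforms the Goh matrix into $B^{\,t}\mathcal{L}^2_{\widehat{X}}B$ modulo $\mathcal{I}_{\widehat{X}}$. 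In either case, the rank over $\R\lb x\rb[p]/\mathcal{I}$ is preserved.

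The hard part will be assertion (i), the codimension preservation. My plan is to interpret $\psi$, restricted to a stratum $\mathcal{U}^{d,I}_m$, as the orbit map for the action of the finite-dimensional algebraic group $G^d:=GL_n(\R)\times GL_m(\R[x]/\langle x\rangle^{d+1})$, where $GL_n(\R)$ acts by linear coordinate changes and $GL_m(\R[x]/\langle x\rangle^{d+1})$ acts by right multiplication on the generator vector. A direct dimension count using the parametrization in (a)--(c) of each stratum by the triple $(A_{top},\,A_{bot}(0),\,\xi)\in GL_m(\R[x]/\langle x\rangle^{d+1})\times\R^{m(n-m)}\times\mathcal{W}^d_m$ gives
\[
\dim\mathcal{J}^d_m(\bar{x})-\dim\mathcal{W}^d_m = m^{2}\Bigl(1+\sum_{k=1}^d|I_k|\Bigr)+m(n-m),
\]
which coincides with the dimension of the relevant orbits; hence $\psi$ restricts on each stratum to a semi-algebraic submersion with constant-dimensional fibers, so the preimage of any codimension $s$ semi-algebraic set is codimension $s$ on that stratum. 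As the finitely many strata are open in $\mathcal{U}^d_m$ and cover it, the codimension of $\psi^{-1}(Z)$ equals $s$ globally. The main technical care lies in verifying that the orbit map has the expected fiber dimension on each stratum, which reduces to an elementary but careful linear-algebra computation using the explicit recipe above.
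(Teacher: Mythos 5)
Your construction is essentially the paper's, and it is correct; the differences are in how the normal-form reduction is factored and how (i) is verified. The paper first applies a stratum-wise canonical linear change of coordinates $\varphi$ (exactly your steps (a)--(b)) and then, instead of right-multiplying by $A_{top}(x)^{-1}$ in one stroke, performs the generator change as a composition of $2m$ elementary operations $\Phi_j$ (rescale $X^j$ by the unit $1/(1+A^j_j)$) and $\Psi_j$ (subtract $A^k_jX^j$ from the other $X^k$), i.e.\ a Gaussian elimination of $A_{top}$. The payoff of that factorization is precisely the point you defer: each elementary step, restricted to the fibers of an explicit parametrization, is a linear bijection onto the target, so the codimension statement (i) is immediate step by step, with no need to establish that a global "orbit map'' has equidimensional fibers. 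Your one-shot version does work --- your dimension count $\dim\mathcal{J}^d_m(\bar x)-\dim\mathcal{W}^d_m=m^2\bigl(1+\sum_k|I_k|\bigr)+m(n-m)$ is correct and the parametrization of each stratum by $\bigl(A_{top},A_{bot}(0),\xi\bigr)$ is a semi-algebraic bijection with polynomial inverse, which is exactly the "careful linear-algebra computation'' you postpone --- but note two small points: the strata indexed by the lexicographically first invertible minor are locally closed, not open (only the first one is open); this does not hurt the conclusion, since on the open dense stratum the codimension of $\psi^{-1}(Z)$ is exactly $s$ and on the lower strata the fibers are contained in those of the full parametrization, so the codimension there is at least $s$. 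Your verification of (ii) via the congruence $\mathcal{L}^2\mapsto B^{\mathsf{tr}}\mathcal{L}^2B$ modulo $\mathcal{I}_{\widehat X}$ for an invertible $B$ over $\R\lb x\rb$ is a cleaner, unified form of the paper's case-by-case check for the operations $\Phi_j$ and $\Psi_j$, and is correct since the Leibniz terms $p\cdot X^k$ lie in $\mathcal{I}_{\widehat X}$ and the ideal itself is unchanged under an invertible change of generators. (A cosmetic slip: the unipotent frame change killing $A_{bot}(0)$ should modify $\partial_{x_j}$ for $j\le m$ by terms in $\partial_{x_i}$, $i>m$ --- equivalently the coordinate change $x_i\mapsto x_i-\sum_k C_{i-m,k}x_k$ for $i>m$ --- rather than the other way around; the intent is clear.)
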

We postpone the proof to appendix \ref{app:ChangeToNormalForm}. The Lemma follows from standard ideas and computations: we make a linear change of coordinates and a systematic study of the changes of generators of $\mbox{Span}(X^1,\ldots,X^m)$ which are necessary to obtain the normal forms in $\mathcal{D}_{NL}$. We are now ready to prove Proposition \ref{prop:TechnicalGenericity}:

\begin{proof}[Proof of Proposition \ref{prop:TechnicalGenericity}]
By Lemma \ref{lem:ChangeToNormalForm}, it is enough to prove that there exists an open dense set $\mathcal{O}\subset \mathcal{W}^d_m$ whose complement is a semi-algebraic set of codimension $n+1$, such that for every $\widehat{X} \in \mathcal{D}_{NF}$ such that $ j^d(\widehat{X}) \in \mathcal{O}$, we have that $\mathcal{R}(\widehat{X})$ is maximal. We start by remarking that for every $\widehat{X} \in \mathcal{D}_{NF}$, we have
\[
h^k = p_k + \sum_{i=m+1}^n A^k_i(x) p_i \qquad \forall k=1,\ldots,m,
\]
so that the matrix $\mathcal{L}^2_{\hat{X}}$ does not depend upon the variables $p_1,\ldots,p_m$. It follows that
\[
\mathcal{R}(\widehat{X}) = \mbox{rank}_{ \mathbb{R} \lb x \rb [p]/\mathcal{I}_{\widehat{X}}} \bigl(\mathcal{L}^2_{\widehat{X}}\bigr) = \mbox{rank}_{ \mathbb{R} \lb x \rb [p]} \bigl(\mathcal{L}^2_{\widehat{X}}\bigr).
\]
Next, we slightly abuse notation, and we also denote by $j^d$ the extension of the truncated mapping $\mathbb{R} \lb x\rb \to \mathbb{R}[x]$ to $\mathbb{R} \lb x\rb[p] \to \mathbb{R}[x][p]$ (where $j^d$ acts as the identity over $p$). Note that the rank of $\mathcal{L}^2_{\widehat{X}}$ can only decrease when we truncate its Taylor expansion, and that $j^{d-1}(\mathcal{L}^2_{\widehat{X}})$ only depends on $\xi=j^d(\widehat{X})$, that is,
\[
\mbox{rank}\bigl(\bar{\mathcal{L}}^2_{\xi}\bigr) \leq \mbox{rank}\bigl(\mathcal{L}^2_{\widehat{X}}\bigr) \quad \mbox{with} \quad \bar{\mathcal{L}}^2_{\xi}:=j^{d-1}\bigl(\mathcal{L}^2_{\hat{X}_{\xi}}\bigr)= j^{d-1}\bigl(\mathcal{L}^2_{\widehat{X}}\bigr) .
\]
We now prove the existence of $\mathcal{O}$ in an inductive way. To that end, we consider the point $\pa = (\bar{x},p_0) = (0,p_0)$, where $p_0=(0,\ldots,0,1)$. Moreover, given a sub-index $I \subset \{1,\ldots,m\}$, we recall that $\varphi(\bar{\mathcal{L}}^2_{\xi},I)$ denotes the associated Pfaffian; its evaluation at $p_0$, which yields a series in $\mathbb{R} \lb x\rb$, will be denoted by $\varphi(\bar{\mathcal{L}}^2_{\xi},I)_{|p_0}$ as all its Lie derivatives along vector fields. We are ready to state the inductive claim:

\begin{claim}
For every even $0< r \leq m$ and every index $I \in \Lambda_r$, there exists a semi-analytic set $B_I \subset \mathcal{W}_m^d$ of codimension $n+1$, such that $\varphi(\bar{\mathcal{L}}_{\xi}^2,I)_{|p_0} \not\equiv 0$ for $\xi \in B_I$.
\end{claim}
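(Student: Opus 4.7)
The plan is to prove the inductive claim by induction on the even integer $r$, with the case $r=0$ handled trivially by the convention $\varphi\equiv 1$.

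For the base case $r=2$ with $I=\{i,j\}$, the normal form $\widehat{X}\in\mathcal{D}_{NF}$ gives $[X^i,X^j]=\sum_{l>m}(X^i(A^j_l)-X^j(A^i_l))\,\partial_{x_l}$, so $\varphi(\bar{\mathcal{L}}^2_\xi,\{i,j\})_{|p_0}$ equals the truncation at order $d-1$ of the polynomial $X^i(A^j_n)-X^j(A^i_n)\in\mathbb{R}[x]$. The coefficients on the monomials $1,x_1,\ldots,x_n$ are affine functions of the jet coordinates $\xi\in\mathcal{W}^d_m$; using $A^k_l(0)=0$, the dominant contributions are the $n+1$ quantities $\partial^{\alpha}(\partial_{x_i}A^j_n-\partial_{x_j}A^i_n)(0)$ for $|\alpha|\leq 1$, each involving pairwise distinct jet coordinates. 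These $n+1$ affine functionals on $\mathcal{W}^d_m$ are linearly independent, so their common zero locus is an affine subspace of codimension exactly $n+1$, establishing the claim for $r=2$.

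For the inductive step, fix $r\geq 4$ even, $I\in\Lambda_r$, and pick any $i_0\in I$. Proposition \ref{PROPPfaffian}(i) gives
\[
\varphi(\bar{\mathcal{L}}^2_\xi,I)_{|p_0} = \frac{1}{s}\sum_{j\in I\setminus\{i_0\}}\epsilon(I,i_0)\,\epsilon(I\setminus\{i_0\},j)\,h^{i_0 j}_{|p_0}\,\varphi\bigl(\bar{\mathcal{L}}^2_\xi,I\setminus\{i_0,j\}\bigr)_{|p_0}.
\]
The structural point is that each factor $h^{i_0 j}_{|p_0}=X^{i_0}(A^j_n)-X^j(A^{i_0}_n)$ is linear in the jet coordinates of $A^{i_0}_n$, while every smaller Pfaffian $\varphi(\bar{\mathcal{L}}^2_\xi,I\setminus\{i_0,j\})_{|p_0}$ is independent of $A^{i_0}$ altogether. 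By the inductive hypothesis applied to each smaller index, outside a semi-algebraic set of codimension $\geq n+1$ at least one such Pfaffian, say for $j=j^*$, is not identically zero as a polynomial in $x$. Over such generic $\xi$, the right-hand side is a non-trivial linear combination in the jet of $A^{i_0}_n$; independently perturbing the $n+1$ Taylor coefficients of $A^{i_0}_n$ associated with $1,x_1,\ldots,x_n$ produces $n+1$ independent affine conditions under which as many Taylor coefficients of $\varphi(\bar{\mathcal{L}}^2_\xi,I)_{|p_0}$ simultaneously vanish, so the bad set $B_I$ has codimension $\geq n+1$.

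Combining this at $r=m$ (when $m$ is even) or for every $I\in\Lambda_{m-1}$ (when $m$ is odd), together with $\varphi^2=\mathrm{Det}$ and the fact that truncation can only lower rank, produces an open dense semi-algebraic set $\mathcal{O}\subset\mathcal{W}^d_m$ of codimension-$(n+1)$ complement on which $\mathcal{R}(\widehat{X})$ attains its maximum, completing Proposition \ref{prop:TechnicalGenericity}. The main difficulty I anticipate lies in the inductive step: verifying rigorously that the chosen $n+1$ perturbations of $A^{i_0}_n$ impose $n+1$ algebraically independent conditions on $\xi$. This reduces to showing that a specific $(n+1)\times(n+1)$ matrix whose entries are low-order Taylor coefficients of the active smaller Pfaffian $\varphi(\bar{\mathcal{L}}^2_\xi,I\setminus\{i_0,j^*\})_{|p_0}$ is generically invertible, a property that itself follows from the inductive hypothesis by a Zariski-density argument.
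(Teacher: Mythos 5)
Your overall architecture coincides with the paper's: induction on $r$, expansion of $\varphi(\bar{\mathcal{L}}^2_{\xi},I)_{|p_0}$ along a pivot $i_0$ via Proposition \ref{PROPPfaffian}\,(i), and the observation that the result is affine in the jet of one coefficient function of $X^{i_0}$, with ``slope'' governed by the smaller Pfaffians $\varphi_j:=\varphi(\bar{\mathcal{L}}^2_{\xi},I\setminus\{i_0,j\})_{|p_0}$, which do not involve $X^{i_0}$ and are handled by the inductive hypothesis. The base case $r=2$ is fine (modulo the small point that the constant Taylor coefficient of $A^{i_0}_n$ is not a free parameter, since $A^k_i(0)=0$ in the normal form $\mathcal{D}_{NF}$). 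The genuine gap is exactly where you flag it, and it is not fillable by a routine Zariski-density argument. Writing $v$ for the jet of $A^{i_0}_n$, the $v$-linear part of $\varphi(\bar{\mathcal{L}}^2_{\xi},I)_{|p_0}$ has the shape $L(v)=\tfrac1s\sum_{j}\pm\,\varphi_j\cdot D_j v$, where each $D_j$ is a first-order operator. Your $n+1$ test functionals are the Taylor coefficients of $L(v)$ on $1,x_1,\dots,x_n$, and the degree-$\le 1$ part of each product $\varphi_j\cdot D_jv$ equals $[\varphi_j]_0\,[D_jv]_{\le1}+[\varphi_j]_1\,[D_jv]_0$. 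Hence on the locus where all $\varphi_j$ vanish at $\bar{x}$ (at most $r-1\le n-1$ equations, so a set of codimension $\le n-1$, far too large to discard), your $(n+1)\times(n+1)$ matrix has rank at most $r-1<n+1$, and it is identically zero where the $\varphi_j$ vanish to order $2$. The inductive hypothesis only gives $\varphi_j\not\equiv0$ as a formal series outside a codimension-$(n+1)$ set; it says nothing about the vanishing order of $\varphi_j$ \emph{at the point} $\bar{x}$, which is precisely the quantity your step needs.

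The paper avoids this by choosing different test functionals and perturbation parameters: it takes the $d\ge n+2$ iterated Lie derivatives $(X^1_{\xi})^{k-1}\bigl[\varphi(\bar{\mathcal{L}}^2_{\xi},I)\bigr]_{|p_0}$, $k=1,\dots,d$, paired with the $d$ coefficients $\gamma_1,\dots,\gamma_d$ of $A^2_n$ on $x_1,\dots,x_1^d$ (so the pivot is $i_0=2$ and the derivatives are taken along $X^1$). The resulting Jacobian is lower-triangular with all diagonal entries equal to the single smaller Pfaffian $\varphi(\bar{\mathcal{L}}^2_{\xi},I\setminus\{1,2\})_{|p_0}$, and the number $d\ge n+2$ of equations strictly exceeds the target codimension $n+1$. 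If you wish to salvage your version, you must either adopt that device or strengthen the inductive statement so that it controls the low-order Taylor data of the smaller Pfaffians at $\bar{x}$ (e.g.\ that outside a codimension-$(n+1)$ set some derivative of order $\le 1$ of $\varphi_j$ is nonzero at $\bar x$), rather than merely their non-vanishing as formal series; as written, the final ``generically invertible matrix'' you appeal to is in fact degenerate on a set of codimension as low as $1$.
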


Note that the Proposition easily follows from the Claim. We therefore turn to its proof, which follows by induction on $r$; for $r=0$ there is nothing to prove. Suppose the Claim proved up until $r-2$, and fix an index set $I \in \Lambda_r$. Up to re-ordering, we may suppose that $I=\{1,\ldots,r\}$. Consider the mapping
\[
Tr: \mathcal{W}_m^d \to \mathbb{R}^d
\]
defined as
\[
Tr(\xi) \cdot e_k = (X^1_{\xi})^{k-1}(\varphi(\bar{\mathcal{L}}_{\xi},I))_{|p_0} \qquad \forall k=1, \ldots,d,
\]
where $e_1, \ldots, e_d$ denotes the vectors of the canonical basis in $\R^d$. Then, recalling that for each $\xi \in \mathcal{W}^d_m$, $X_{\xi} = (X^1_{\xi}, \ldots, X^m_{\xi})$ denotes the tuple of polynomial vector fields such that $j^{d+k}(\hat{X}_{\xi}) =\xi$ for every nonnegative integer $k$, we may write
\[
X^2_{\xi} = \partial_{x_2} + \sum_{i=m+1}^{n} A^2_i(x,\xi) \partial_{x_n}, \quad \text{ and } \quad A^2_n(x,\xi) = \sum_{k=1}^d \gamma_k x_1^k + \tilde{A}(x,\xi)
\]
where $\tilde{A}(x,\xi)$ is such that $A(x_1,0,\ldots,0,\xi) \equiv 0$ and $\tilde{A}(x,\xi)$ is independent of the coefficients $\gamma_1,\ldots,\gamma_d$. In what follows, we compute the derivatives of $Tr$ with respect to the variables $\gamma_1,\ldots,\gamma_d$ at $\gamma_1= \cdots = \gamma_d=0$.  We start by some simple observations, we have 
\[
(p \cdot [X^1_{\xi},X^2_{\xi}])_{|p_0} = \gamma_1  + R_1(\xi)
\]
\[
\mbox{and}  \quad ( X^1_{\xi})^{k-1}(p \cdot [X^1_{\xi},X^2_{\xi}])_{|p_0} = \gamma_{k} + R_k(\xi) \qquad \forall k=1, \ldots,d,
\]
where $R_k:\mathcal{W}_m^d \to \mathbb{R}$ is a function independent of the variables $\gamma_k, \ldots, \gamma_d$. Furthermore, we have that, for every $j \in I\setminus \{2\}$
\[
\varphi(\bar{\mathcal{L}}^2_{\xi},I \setminus \{2,j\})_{|p_0} \text{ is independent of } \gamma_1, \ldots, \gamma_d
\]
and for every $j>1$ and every $k=1,\ldots,d$
\[
\begin{aligned}
(X^1_{\xi})^{k-1}(p \cdot [X^2_{\xi},X^j_{\xi}] )_{|p_0}  & \text{ is independent of } \gamma_{k},\ldots,\gamma_{d}.
\end{aligned}
\]
Now, by Proposition \ref{PROPPfaffian} (i):
\[
\begin{aligned}
\varphi(\bar{\mathcal{L}}^2_{\xi},I)_{|{p_0}} &= \frac{1}{(r/2)} \sum_{j\in I\setminus \{2\}} -\epsilon(I\setminus \{2\},j) \cdot ([X_{\xi}^2,X_{\xi}^j])_{|p_0} \cdot \varphi(\bar{\mathcal{L}}^2_{\xi},I\setminus\{2,j\})_{|p_0}\\
&= \frac{1}{(r/2)} \left[ \gamma_1 \cdot \varphi(\bar{\mathcal{L}}^2_{\xi},I\setminus\{1,2\})_{|p_0} + S_1(\xi) \right]
\end{aligned}
\]
where $S_1:\mathcal{W}_m^d \to \mathbb{R}$ is independent of $\gamma_1, \ldots, \gamma_d$. Next, by deriving $\varphi(\bar{\mathcal{L}}^2_{\xi},I)$ with respect to $X^1_{\xi}$, we get:
\[
\begin{aligned}
(X^1_{\xi}[\varphi(\bar{\mathcal{L}}_{\xi},I)])_{|p_0}
&= \frac{1}{(r/2)} \left[ \gamma_2 \cdot \varphi(\bar{\mathcal{L}}_{\xi},I\setminus\{1,2\})_{|p_0} + S_2(\xi) \right],
\end{aligned}
\]
where $S_2:\mathcal{W}_m^d \to \mathbb{R}$ is independent of $\gamma_2,\ldots,\gamma_d$. Repeating this process we get for every $k=1, \ldots,d$,
\[
\begin{aligned}
((X^1_{\xi})^{k-1}[\varphi(\bar{\mathcal{L}}_{\xi},I)])_{|p_0}&= \frac{1}{(r/2)} \left[ \gamma_{k} \cdot \varphi(\bar{\mathcal{L}}^2_{\xi},I\setminus\{1,2\})_{|p_0} + S_{k}(\xi) \right],
\end{aligned}
\]
where $S_k:\mathcal{W}_m^d \to \mathbb{R}$ is independent of $\gamma_{k},\ldots,\gamma_d$. Therefore, the Jacobian of $Tr$ in respect to the variables $\gamma_1, \ldots, \gamma_d$ at the origin has a determinant equal to
\[
\varphi(\bar{\mathcal{L}}^2_{\xi},I\setminus\{1,2\})_{|p_0}^{d}
\]
It follows from the induction hypothesis, that outside a semialgebraic set of codimension $n+1$, the mapping $Tr$ is a submersion. We conclude easily.
\end{proof}

\section{Proof of Theorem \ref{THM2}}\label{SECProofTHM2}

Let $M$ and $\Delta$ be of class $\mathcal{C}$ and $\Delta$ a totally nonholonomic distribution of corank $1$ and let $\vec{\mathcal{F}}$ be the integrable distribution given by Theorem \ref{THM1} which is assumed to satisfy properties (H1)-(H2) of Theorem \ref{THM2}. From Theorem \ref{thm:CoRank1Foliation}  and Remark \ref{REM15june}, we infer that the distribution $\mathcal{H}_{\vert \mathcal{R}_0}:=d\pi(\vec{\mathcal{F}}_{\vert \mathcal{S}_0})$ has constant rank $0$ or $1$ and that the singular set $\sigma :=\pi(\Sigma)$ has Lebesgue measure zero in $M$. If $\mathcal{H}_{\vert \mathcal{R}_0}$ has rank $0$ then, by Theorem \ref{thm:CoRank1Foliation}  (iii), all (non-trivial) singular horizontal paths must be contained in $\sigma$ and as a consequence for any $x\in M$, the set $\mbox{Abn}_{\Delta} (x)$ is contained in $\sigma$ which has Lebesgue measure $0$, so the Sard Conjecture is satisfied. It remains to show that the Sard Conjecture holds true whenever $\mathcal{H}_{\vert \mathcal{R}_0}$ has rank $1$. Our proof follows closely the proof given in \cite{br18}. We fix a smooth Riemannian metric $g$ on $M$ and denote by $d^g$ its geodesic distance and by $\mathcal{H}^1$ the corresponding $1$-dimensional Hausdorff measure in $M$, then we start with the following Lemma which can be proved in the exact same way as \cite[Lemma 2.2]{br18} (we refer the reader to the discussion before \cite[Lemma 2.2]{br18} for the definition of $\partial\omega_z$):

\begin{lemma}\label{LEMxbar1}
Assume that $\mathcal{H}_{\vert \mathcal{R}_0}$ has rank $1$ and that there is $x\in M$ such that $\mbox{Abn}_{\Delta} (x)$ has positive Lebesgue measure. Then there is $\bar{x} \in \sigma$  such that for every neighborhood $\mathcal{V}$ of $\bar{x}$ in $M$, there are two closed sets $S_0, S_{\infty}$ in $M$ satisfying the following properties:
\begin{itemize}
\item[(i)] $S_0 \subset \mathcal{V}$ and $S_0$ has positive Lebesgue measure,
\item[(ii)] $S_{\infty} \subset   \sigma \cap \mathcal{V}$,
\item[(iii)] for every $z\in S_0$, there is a half-orbit $\omega_z$ of the line foliation $\mathcal{H}_{\vert \mathcal{R}_0}$ which is contained in $\mathcal{V}$ such that $\mathcal{H}^1(\omega_z)\leq 1$ and $\partial \omega_z \in S_{\infty}$.
\end{itemize}
\end{lemma}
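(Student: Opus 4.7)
The plan is to follow the proof of the three-dimensional analogue \cite[Lemma 2.2]{br18} essentially verbatim; the only adaptation needed is that the structural input used there (coming from the Martinet characteristic foliation) is here replaced by Theorem \ref{thm:CoRank1Foliation} together with hypotheses (H1)-(H2). The three ingredients I will use are: (a) $\sigma$ has Lebesgue measure zero in $M$ (Remark \ref{REM15june} combined with (H2)); (b) $\mathcal{H}_{| \mathcal{R}_0}$ is a smooth rank-one foliation (assumption (H1)); (c) every singular horizontal path $\gamma$ with $\gamma^{-1}(\sigma)$ of Lebesgue measure zero is tangent to $\mathcal{H}_{| \mathcal{R}_0}$ (Theorem \ref{thm:CoRank1Foliation}(iii)).

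First I would localize and set up the leaf-saturation. By (a), the set $E := \mbox{Abn}_{\Delta}(x)\cap \mathcal{R}_0$ has positive Lebesgue measure. For each $y\in E$, pick a singular horizontal path $\gamma_y$ from $x$ to $y$; a standard truncation argument allows one to arrange $\gamma_y^{-1}(\sigma)$ to have measure zero, so by (c) the path $\gamma_y$ is tangent to $\mathcal{H}_{| \mathcal{R}_0}$ on $\mathcal{R}_0$, and concatenating with short orbit arcs through $y$ shows that the local leaf of $\mathcal{H}_{| \mathcal{R}_0}$ through $y$ lies in $E$. For each $y\in E$, let $\omega_y^-$ denote the backward half-orbit of $\mathcal{H}_{| \mathcal{R}_0}$ starting at $y$, run until it either accumulates on $\sigma$ or attains arclength one. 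Then I would cover $\sigma$ by countably many open balls of radius at most one and partition $E$ according to the ball in which the terminal endpoint $\partial \omega_y^-$ first appears. A pigeonhole argument using the positive measure of $E$ produces a ball around some point $\bar x\in \sigma$ whose associated subset $E_{\bar x}\subset E$ has positive Lebesgue measure. Given any neighborhood $\mathcal{V}$ of $\bar x$, define
\[
S_0 := \left\{ z\in E_{\bar x} \, : \, \omega_z^-\subset \mathcal{V}, \ \mathcal{H}^1(\omega_z^-)\leq 1, \ \partial\omega_z^-\in \sigma\cap\mathcal{V} \right\}
\]
and $S_\infty := \overline{\bigcup_{z\in S_0} \partial \omega_z^-} \subset \sigma\cap \mathcal{V}$. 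Items (i)-(iii) of the lemma then follow by construction.

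The main obstacle is the technical core inherited from \cite[Lemma 2.2]{br18}, namely that one must exclude the possibility that a positive-measure sub-family of the backward half-orbits $\omega_y^-$ never accumulates on $\sigma$. This is ruled out by a flowbox argument in $\mathcal{R}_0$: if such an escaping sub-family existed, the leaf-saturation of $E$ provided by (c) together with the smooth line-foliation structure (b) would realize a positive-measure portion of $E$ as the image, under the flow of $\mathcal{H}_{| \mathcal{R}_0}$ on an interval of length one, of a transverse section of positive $(n-1)$-dimensional Hausdorff measure, and a horizontal perturbation inside this tube would contradict the abnormality of the corresponding extremals. The passage from dimension three to general $n$ here is mechanical, since the leaves of $\mathcal{H}_{| \mathcal{R}_0}$ remain one-dimensional and the transverse direction enters the argument only through a Fubini-type decomposition.
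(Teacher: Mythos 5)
Your high-level strategy is the right one: the paper's own ``proof'' of this lemma is precisely the statement that it can be carried out exactly as in \cite[Lemma 2.2]{br18}, with the Martinet characteristic foliation replaced by the data of Theorem \ref{thm:CoRank1Foliation} and (H1)--(H2). However, your elaboration has a genuine gap at the central point of the lemma, namely the positivity of the measure of $S_0$ for an \emph{arbitrary} neighborhood $\mathcal{V}$ of $\bar{x}$. Your pigeonhole is performed once, over a cover of $\sigma$ by balls of radius at most one, and only controls where the \emph{endpoints} $\partial\omega_y^-$ land; it says nothing about where the starting points $y$ sit, nor about whether the half-orbits $\omega_y^-$ remain inside a given small $\mathcal{V}$. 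For a small $\mathcal{V}$ your set $S_0=\{z\in E_{\bar x}: \omega_z^-\subset\mathcal{V},\ \mathcal{H}^1(\omega_z^-)\le 1,\ \partial\omega_z^-\in\sigma\cap\mathcal{V}\}$ may perfectly well be empty, so ``(i)--(iii) follow by construction'' is not justified. Closing this gap requires two ingredients you do not use: first, the backward saturation of $\mbox{Abn}_\Delta(x)\cap\mathcal{R}_0$ (restrictions of singular paths are singular, so every point of $\omega_y^-$ again lies in $E$) together with the fact that the finite-time flow of the generator $\mathcal{Z}$ is a diffeomorphism on $\mathcal{R}_0$, which shows that for every $r>0$ the set of points whose remaining half-orbit has length at most $\min(r,1)$ and stays within distance $r$ of its limit point still has positive measure; second, a compactness/diagonal argument over a sequence of scales $r_k\to 0$ to extract a single $\bar x\in\sigma$ that works simultaneously for all neighborhoods $\mathcal{V}$. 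A single pigeonhole at a fixed scale cannot produce such a $\bar x$.

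Two secondary points. The ``standard truncation argument'' making $\gamma_y^{-1}(\sigma)$ of measure zero is not available (you cannot truncate and still join $x$ to $y$); the correct localization is to restrict $\gamma_y$ to the maximal terminal subinterval on which it stays in $\mathcal{R}_0$: this restriction is singular, avoids $\sigma$, hence follows a leaf by Theorem \ref{thm:CoRank1Foliation}(iii), and its other endpoint lies in $\sigma$ except for the measure-zero set of $y$ on the leaf through $x$. This also disposes of what you call the ``main obstacle'': since $\gamma_y$ has finite length and its terminal arc is a connected subset of a leaf accumulating on $\sigma$, the backward half-orbit automatically has finite $\mathcal{H}^1$-measure and converges to a point of $\sigma$, so no flowbox or perturbation argument is needed there (and the proposed ``horizontal perturbation contradicting abnormality'' is too vague to constitute a proof in any case). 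The real difficulty of the lemma is the one described above, not this one.
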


To conclude the proof of Theorem \ref{THM2}, we assume that $\mathcal{H}_{\vert \mathcal{R}_0}$ has rank $1$, we suppose that there is $x\in M$ such that $\mbox{Abn}_{\Delta} (x)$ has positive Lebesgue measure and we apply the above Lemma. By Theorem \ref{thm:CoRank1Foliation}(iv), there are a relatively compact open neighborhood $\mathcal{V}$ of $\bar{x}$ in $M$ and a set of coordinates $x$ in $\mathcal{V}$ such that $\bar{x}=0$ and the distribution $\mathcal{H}_{\vert \mathcal{V}}$ is generated by a vector field $\mathcal{Z}$ with controlled divergence. The latter property implies that, apart from shrinking $\mathcal{V}$, there exists $K>0$ such that (c.f. \cite[Lemma 2.3]{br18})
\begin{eqnarray}\label{divK}
\left| \mbox{div}_x (\mathcal{Z}) \right| \leq K \, |\mathcal{Z}(x)| \qquad \forall x\in  \mathcal{V}.
\end{eqnarray}
Now, by Lemma \ref{LEMxbar1}, there are two closed sets $S_0, S_{\infty} \subset  \mathcal{V}$ satisfying properties (i)-(iii). Denote by $\varphi_t$ the flow of $\mathcal{Z}$. For every $z\in S_0$, there is $\epsilon \in \{-1,1\}$ such that $\omega_z = \{ \varphi_{\epsilon t} (z) \, \vert \, t \geq 0\}$. Then, there is $\epsilon \in \{-1,1\}$ and $S_0^{\epsilon} \subset S_0$ of positive Lebesgue measure such that for every $z\in S_0^{\epsilon}$ there holds
\begin{eqnarray}\label{22july1}
\omega_z =\Bigl\{ \varphi_{\epsilon t} (z) \, \vert \, t \geq 0\Bigr\} \subset \mathcal{V},  \quad \mathcal{H}^1(\omega_z) \leq 1, \quad \mbox{and} \quad \lim_{t\rightarrow +\infty} d\left( \varphi_{\epsilon t} (z), S_{\infty}\right)=0,
\end{eqnarray}
where $d(\cdot, S_{\infty})$ stands for the distance function to $S_{\infty}$ with respect to $g$. Set for every $t\geq 0$,
\[
S_t := \varphi_{\epsilon t} \left( S_0^{\epsilon} \right)
\]
and denote by $\mbox{vol}$ the volume associated with the Riemannian metric $g$ on $M$. Since $S_{\infty}$ has volume zero (because $S_{\infty}\subset \sigma$ with $\sigma$ of Lebesgue measure zero), by the dominated convergence  Theorem, the last property in (\ref{22july1}) yields
\begin{eqnarray}\label{volSt}
\lim_{t\rightarrow +\infty} \mbox{vol} \left( S_t\right)=0.
\end{eqnarray}
Moreover, there is $C>0$ such that   for every $z\in S_0^{\epsilon}$ and every $t\geq 0$, we have ($|\cdot|$ denotes the norm with respect to $g$)
\[
\int_{0}^t \left| \mathcal{Z}  \left(\varphi_{\epsilon s}(z) \right) \right|   \, ds \leq C \mathcal{H}^1 \left( \omega_z\right)  \leq C.
\]
Therefore by Proposition \cite[Prop. B2]{br18} and \eqref{divK}, we have for every $t\geq 0$
\begin{eqnarray*}
\mbox{vol} (S_t) = \mbox{vol} \left( \varphi_{\epsilon t} (S_0^{\epsilon})\right) & = & \int_{S_0^{\epsilon}} \exp\left( \int_{0}^t  \mbox{div}_{\varphi_{\epsilon s}(z)}(\epsilon \, \mathcal{Z})  \, ds\right) \,  d\mbox{vol}(z)\\
&\geq & \int_{S_0^{\epsilon}} \exp \left( -K \int_{0}^t  \left|\mathcal{Z}\left( \varphi_{\epsilon s}(z) \right)\right| \, ds\right) \,  d\mbox{vol}(z)\\
& \geq & e^{-K C} \, \mbox{vol}(S_0),
\end{eqnarray*}
which contradicts (\ref{volSt}). The proof of Theorem \ref{THM2} is complete.

\appendix
 
\section{Proof of Theorem \ref{thm:CoRank1Foliation}}\label{SECAPP1}

Assertions (i), (ii), (iii) and (v) are easy consequences of the definitions of $\mathcal{H}$ and $\mathcal{R}_0$, and Theorem \ref{THM1}. Let us now prove (iv). Since it is enough to verify the result locally, we may assume that $\Delta$ is generated on $M$ by $m$ $\mathcal{C}$-vector fields $X^1,\ldots,X^m$ of the form 
\[
X^i = \partial_{x_i} + A_i(x) \partial_{x_n} \qquad \forall i=1,\ldots, m=n-1
\]
in such a way that (we assume that we have a local set of symplectic coordinates $(x,p)$)
\[
\DeltaPerp = \Bigl\{ (x,p) \in T^*\mathcal{V} \, \vert \, p\neq 0 \mbox{ and } p_i + A_i(x)p_n=0 \, \, \forall i=1, \ldots, n-1 \Bigr\}
\]
and
\[
\left[ X^i, X^j\right] = \left(\partial_{x_i}(A_j) -\partial_{x_j}(A_i) + A_i \partial_{x_4}(A_j) -  A_j \partial_{x_4}(A_i)\right) \, \partial_{x_n} \qquad \forall i,j = 1, \ldots, m.
\]
Thus  the Goh matrix (see Section \ref{ssec:Goh}) and the Pfaffians (see Section \ref{ssec:PfaffianPoly}) have the form 
\[
H_{\pa} = p_n \tilde{H}(x) \quad \mbox{and} \quad 
 \varphi \bigl( \mathcal{L}^2_{\pa},I\bigr) = \varphi_I(x)\, p_n^{|I|} \qquad \forall \pa=(x,p) \in \DeltaPerp, \, \forall I \subset \{1,\ldots,m\}.
\]
Set $\sigma = \pi(\Sigma)$ and  $\mathcal{H} = d\pi(\vec{\mathcal{F}})$. Since $\Sigma$ and $\vec{\mathcal{F}}$ are invariant by dilation, $\sigma$ is a closed $\mathcal{C}$-set and $\mathcal{H}$ has constant rank over each connected component of $M\setminus \sigma$. Now, first consider the extra hypothesis of the Theorem, that is, that $\mathcal{H}$ has constant rank over $M\setminus \sigma$, which is equivalent to asking that $\vec{\mathcal{F}}$ has constant rank over $\mathcal{S}_0$. In this case:
\[
\sigma = M \setminus \mathcal{R}_0 = \Bigl\{x\in M \, \vert \, \varphi_I(x) =0\, \, \forall I \in \Lambda_r\Bigr\},
\]
where $r$ stands for the rank of the Goh matrix outside of $\Sigma$. Next, by Lemma \ref{LEMgenerators}, the local generators of the distribution $\mathcal{H}$ over $\DeltaPerp \cap \mathcal{S}_0$ are of the form:
\[
\mathcal{Y}_I := \sum_{i\in I} \epsilon(I ,i)\cdot  \varphi(\mathcal{L}^2,I\setminus \{i\}) \cdot   \vec{h}^i = p_n^{r} \left(\sum_{i\in I} \epsilon(I ,i)\cdot  \varphi_{I\setminus \{i\}}(x) \,   \vec{h}^i\right),
\] 
where $\vec{h}^i$ is a degree zero vector-field in respect to the cotangent variable $p$. We conclude that
\(
\mathcal{Y}_I = p_n^{r} \left(\mathcal{Z}_I + \tilde{\mathcal{Z}}_I \right),
\)
where 
\begin{equation}\label{eq:ZI}
\mathcal{Z}_I = \sum_{i\in I} \epsilon(I ,i)\cdot  \varphi_{I\setminus \{i\}}(x) \,   X^i,
\end{equation}
can be seen as a section of $TM$ such that $d\pi (\mathcal{Y}_I) = \mathcal{Z}_I$, and $\tilde{\mathcal{Z}}_I $ belongs to the sub-module generated by $\partial_{p_i}$, with $i=1,\ldots,n$; in particular $d\pi(\tilde{\mathcal{Z}}_I)=0$. Now, given two $0$-homogeneous vector-fields $X^1$ and $X^2$, denote by $X^1 = \mathcal{X}^1 + \widetilde{\mathcal{X}}^1$ the analogous decomposition, and note that
\[
[X^1,X^2] = [\mathcal{X}^1, \mathcal{X}^2] + \text{ rest depending on the }\partial_{p_i} \text{ vectors}.
\]
Combining this observation with Lemma \ref{LEMgenerators}, we conclude that at every point $x\in \mathcal{R}_0$, the sub-module of vector-fields generated by $\{\mathcal{Z}_I,\, I \in \Lambda_{r+1}\}$ is closed by the Lie-bracket operation. By Frobenius Theorem, and the fact that the singular locus of $\mathcal{Z}_I$ contains $\sigma$, we conclude that the singular distribution $\mathcal{F}$ generated by $\mbox{Span}\{\mathcal{Z}_I,\, I \in \Lambda_{r+1}\}$ is integrable. Furthermore, it is clear by the construction that $\mathcal{F}$ is regular over $\mathcal{R}_0$.

Finally, recall that $\mathcal{Y}_I$ is of controlled divergence by Lemma \ref{CLAIMdivergence}. Since $\vec{h}^i(x_j) =\delta_{ij}$ for $i,j=1,\ldots,n-1$, we have that $\mbox{div}(\mathcal{Y}_I)$ belongs to the ideal $(\mathcal{Y}_I(x_1),\ldots, \mathcal{Y}_I(x_{n-1}))$ and $\mathcal{Y}_I(p_n)$ belongs to the ideal $ p_n(\mathcal{Y}_I(x_1),\ldots, \mathcal{Y}_I(x_{n-1}))$. By using the fact that $\mathcal{Y}_I$ is homogeneous, we also  have that $\mbox{div}(\mathcal{Y}_I)$ belongs to the ideal $p_n^{r}(\mathcal{Z}_I(x_1),\ldots, \mathcal{Z}_I(x_{n-1}))$ and $ \mathcal{Y}_I(p_n)$ belongs to the ideal $ p_n^{r+1}(\mathcal{Z}_I(x_1),\ldots, \mathcal{Z}_I(x_{n-1}))$. Now:
\[
\mbox{div}(\mathcal{Z}_I) = \mbox{div}\left(\frac{1}{p_n^r}\mathcal{Y}_I\right) - \frac{\mathcal{Y}_I(p_n)}{p_n^{r+1}} \in  (\mathcal{Z}_I(x_1),\ldots, \mathcal{Z}_I(x_{n-1})),
\]
which proves that $\mathcal{Z}_I$ has controlled divergence. Finally, the general case (that is, when the hypothesis that $\mathcal{H}$ has constant rank along $M\setminus \sigma$ is not satisfied) follows by combining the above argument with the formalism introduced in the proof of Theorem \ref{THM1} in order to treat each connected component of $TM \setminus \Sigma$ separately. The necessary adaptations are straightforward, and we omit the details in here.

\section{Proofs of auxiliary results}\label{SECAPP2}
 
 \subsection{Proof of Proposition \ref{PROPPfaffian}}\label{APPPROPPfaffian}
Let us prove (i). By hypothesis, the cardinality of $|I|$ is $r=2s$ for some $s$. Fix $i_0 \in I$, and consider the decomposition
\[
 A_I = v_{i_0} + B_{i_0}, \quad \text{ where }v_{i_0} = \sum_{j\in I}  a_{i_0j} \,e_{i_0} \wedge e_j,
\]
(and recall the notation $a_{i_0j} := -a_{ji_0}$ whenever $j<i_0$). It is straightforward that $v_{i_0} \wedge v_{i_0} \equiv 0$ and $\bigwedge^s B_i \equiv 0$. Therefore
\[
\begin{aligned}
\frac{1}{(r/2)!} \bigwedge^{r/2} A_I &= \frac{1}{(r/2)!} v_{i_0} \wedge  \bigwedge^{s-1} B_{i_0}   \\
&= \frac{1}{(r/2)!} \sum_{j \in I\setminus\{i_0\}} a_{i_0j}\, e_{i_0}\wedge e_j  \wedge  \bigwedge^{s-1} B_{i_0}  \\
 &= \frac{1}{(r/2)!}  \sum_{j \in I\setminus\{i_0\}} a_{i_0j} \, e_{i_0}\wedge e_j  \wedge  \bigwedge^{s-1} A_{I\setminus \{i_0,j\}}  \\
 &= \frac{1}{(r/2)} \sum_{j \in I\setminus\{i_0\}} a_{i_0j}\cdot \varphi(A,I\setminus\{i_0,j\})\, e_{i_0}\wedge e_j  \wedge  \bigwedge_{k \in I\setminus \{i_0,j\}} e_k
 \end{aligned}
\]
and the formula easily follows from the definition of the function $\epsilon$, which concludes the proof of (i).

Next, it will be convenient to establish some extra notation for determinants of non-symmetric minors of $A$. Given $I$ and $J \in\Lambda_l$, we consider
\[
A_{I,J} = [a_{ij}]_{i\in I,j\in J}, \quad \mbox{Det}(A,I,J) := \det(A_{I,J})
\]
and note that $A_I= A_{I,I}$. The following result about a special case of $\mbox{Det}(A,I,J)$ is crucial for the proof of (ii):

\begin{lemma}\label{LEMPfaffianTechnical}
Let $A$ be an anti-symmetric bilinear operator over a $\mathbb{K}$-vector space $V$ and $T$ be a sub-index of $\{1,\ldots,n\}$ of odd cardinality. Then, for any fixed $i,\, j\in T$, we have
\[
\mbox{\em Det}(A, T\setminus\{i\},T\setminus \{j\}) = \varphi(A,T\setminus\{i\}) \cdot \varphi(A,T\setminus \{j\})
\]
\end{lemma}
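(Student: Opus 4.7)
The plan is to use a bordering argument: I would extend the odd-order skew-symmetric matrix $B := A_T$ to a skew-symmetric matrix of \emph{even} order by adjoining one row and column of formal indeterminates, and then exploit the identity $\varphi^2 = \det$ on the bordered matrix, matching coefficients of quadratic monomials in the indeterminates to read off the desired formula.

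Concretely, let $u = (u_t)_{t \in T}$ be a column of formal indeterminates, let $\star$ be a new index placed after all elements of $T$ in the ordering, and set $T' := T \cup \{\star\}$. I would then consider the skew-symmetric matrix indexed by $T'$,
\[
\tilde B := \begin{pmatrix} B & u \\ -u^{T} & 0 \end{pmatrix},
\]
of even order $|T|+1 = 2s+2$, so that identity (\ref{11fev2}) yields $\det(\tilde B) = \varphi(\tilde B)^2$ as polynomials in the $u_t$'s over $R$.

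For the Pfaffian side, applying Proposition \ref{PROPPfaffian}(i) to $\tilde B$ with $i_0 = \star$ (or, equivalently, expanding $\frac{1}{(s+1)!}\bigwedge^{s+1}\tilde A_{T'}$ directly via the splitting $\tilde A_{T'} = A_T + \sum_{t \in T} u_t\, e_t \wedge e_\star$, using $A_T^{\wedge (s+1)} = 0$ for dimensional reasons), sign-tracking gives
\[
\varphi(\tilde B) = \sum_{t \in T} \epsilon(T, t)\, u_t\, \varphi(A, T \setminus \{t\}).
\]
For the determinant side, the block-matrix determinant formula (valid regardless of whether $B$ is invertible), together with the bottom-right scalar being zero, gives
\[
\det(\tilde B) = u^{T} \mbox{adj}(B)\, u = \sum_{k, l \in T} (-1)^{k+l}\, \det\!\big(B_{T\setminus\{l\},\, T\setminus\{k\}}\big)\, u_k u_l,
\]
where $k,l$ refer to positions in the ordered set $T$. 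Moreover, the transpose identity $(B_{T\setminus\{l\},\,T\setminus\{k\}})^{T} = -B_{T\setminus\{k\},\,T\setminus\{l\}}$, combined with the fact that these minors have even order $2s$, yields the symmetry $\det(B_{T\setminus\{l\},T\setminus\{k\}}) = \det(B_{T\setminus\{k\},T\setminus\{l\}})$.

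Finally, squaring the Pfaffian expression and matching coefficients of $u_k u_l$ on both sides of $\varphi(\tilde B)^2 = \det(\tilde B)$, and using $\epsilon(T,k)\,\epsilon(T,l) = (-1)^{k+l}$ (in position-indexed form), the common sign cancels and we obtain
\[
\det\!\big(B_{T\setminus\{k\},\,T\setminus\{l\}}\big) = \varphi(A, T\setminus\{k\})\,\varphi(A, T\setminus\{l\}),
\]
which is the stated identity (the diagonal case $k=l$ reduces to (\ref{11fev2}) applied to the principal minor $A_{T\setminus\{k\}}$). The main obstacle I anticipate is purely notational: the careful bookkeeping of the several signs — those in the $\epsilon$'s from the Pfaffian expansion, the $(-1)^{k+l}$ from the adjugate formula, and the sign arising from the minor transposition — each of which reduces to $(-1)^{k+l}$ in position-indexed form, so they all cancel cleanly.
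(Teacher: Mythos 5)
Your proposal is correct and follows essentially the same route as the paper: border $A_T$ by a row/column of indeterminates to get an even-order skew-symmetric matrix, expand its determinant to produce the mixed minors, and compare with the square of its Pfaffian via the identity $\varphi^2=\mbox{Det}$. The only difference is cosmetic and in your favor — you compute $\varphi(\tilde B)=\sum_{t}\epsilon(T,t)\,u_t\,\varphi(A,T\setminus\{t\})$ explicitly, which pins down the signs of the linear form that the paper only identifies up to sign by matching the diagonal coefficients.
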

\begin{proof}[Proof of Lemma \ref{LEMPfaffianTechnical}]
Fix a sub-index $T$ of cardinality $2s-1\leq n$ with $s>0$. If $i=j$, the result is straightforward, so we assume that $i\neq j$. Moreover, without loss of generality we may suppose that $T=\{1,\ldots,2s-1\}$. Let $y=(y_1,\ldots, y_{2s-1}) \in \mathbb{R}^{2s-1}$ be fixed, we consider ($\mathsf{tr}$ denotes the transpose)
\[
B(y) = \left[\begin{matrix}
A_T & y^\mathsf{tr}\\
-y & 0
\end{matrix}\right]
\]
and note that $B(y)$ is a skew-symmetric matrix. On the one hand, from the usual properties of the determinant, we have
\[
\begin{aligned}
\mbox{Det}(B(y)) &= \sum_{i,j=1}^{2s-1} (-1)^{i+j} y_i \cdot y_j \cdot \mbox{Det}(A,T\setminus\{i\},T\setminus\{j\})\\
&= \sum_{i=1}^{2s-1} y_i^2 \cdot \varphi(A,T\setminus\{i\})^2 + 2\cdot  \sum_{i<j}(-1)^{i+j}  \cdot y_i \cdot y_j \cdot \mbox{Det}(A,T\setminus\{i\},T\setminus\{j\}).
\end{aligned}
\]
On the other hand, since $B(y)$ is skew-symmetric and $\mbox{Det}(B(y))$ is quadratic homogenous with respect to $y$, we conclude that there exists $f_1,\ldots,f_{2s-1} \in \mathbb{K}$ such that
\[
\mbox{Det}(B(y)) = \left( \sum_{i=1}^{2s-1} y_i f_i \right)^2.
\]
Since the equality must hold for every $y \in \mathbb{R}^{2s-1}$, we conclude that
\[
f_i =  (-1)^i \cdot \varphi(A,T\setminus\{i\})
\]
and the result easily follows.
\end{proof}

We now turn to the proof of (ii). By the usual properties of the derivative of the determinant, we know that
\[
X\left[\varphi(A,I)\right] = \frac{1}{2\cdot \varphi(A,I)} \cdot \mbox{tr}\left( \mbox{Adj}(A_I) \cdot X[A_{I}]   \right),
\]
where $\mbox{Adj}(\cdot)$ denotes the adjoint matrix and $X[A_{I}] $ denotes the matrix $\left[ X(a_{jk}) \right]_{j,k \in I}$. In particular, since the adjoint matrix is the transpose of the cofactor matrix, we have
\[
\begin{aligned}
\left[\mbox{Adj}(A_{I})\right]_{i,j} =  \epsilon(I,i)\cdot \epsilon(I,j)\cdot  \mbox{Det}(A,I\setminus\{j\},I\setminus\{i\}).
\end{aligned}
\]
Therefore, using the fact that $a_{ii}=0$, we have
\begin{equation}\label{eq:FirstDerivative}
X\left[\varphi(A,I)\right] = \frac{1}{2\cdot \varphi(A,I)} \cdot \sum_{i\neq j\in I} \epsilon(I,i)\cdot \epsilon(I,j)\cdot  \mbox{Det}(A,I\setminus\{j\},I\setminus\{i\}) \cdot X(a_{ji}).
\end{equation}
Now, by (i), we have  for all $i\in I$
\[
\varphi(A,I)= \frac{1}{(r/2)}\epsilon(I,i)\cdot  \sum_{k\in I} \epsilon(I\setminus\{i\},k) \cdot a_{ik} \cdot \varphi(A,I\setminus \{i,k\}),
\]
which, by using the definition of the determinant and Lemma \ref{LEMPfaffianTechnical} with $T=I\setminus \{i\}$, for every $i\neq j \in I$, yields
\[
\begin{aligned}
D(A,I\setminus\{j\},I\setminus\{i\}) &= \epsilon(I\setminus\{j\},i)\cdot \sum_{k \in I}  \epsilon(I\setminus\{i\},k) \cdot a_{ik} \cdot D(A,I\setminus\{i,j\},I\setminus\{i,k\})\\
&=  \epsilon(I\setminus\{j\},i)  \cdot \varphi(A,I\setminus \{i,j\}) \cdot\sum_{k \in I}  \epsilon(I\setminus\{i\},k) \cdot a_{ik} \cdot \varphi(A,I\setminus \{i,k\}) \\
&=   \epsilon(I\setminus\{j\},i) \cdot \varphi(A,I\setminus \{i,j\}) \cdot \epsilon(I,i) \cdot (r/2)\cdot \varphi(A,I).
\end{aligned}
\]
Combining this last equality with \eqref{eq:FirstDerivative} yields:
\[
X\left[\varphi(A,I)\right]= \frac{r}{4} \cdot \sum_{i\neq j\in I} \epsilon(I,j) \cdot \epsilon(I\setminus\{j\},i) \cdot \varphi(A,I\setminus \{i,j\}) \cdot X(a_{ji})
\]
and we conclude easily by interchanging $i$ and $j$.

\subsection{Proof of Proposition \ref{PROPPfaffianBasis}}\label{APPPROPPfaffian2}
Fix $I \in \Lambda_{r+1}$ with $r=2s$ together with an index $l \in \{1,\ldots, n\}$ and consider the anti-symmetric bi-linear operator $A_{I,l}$ over $W = \mathbb{K}^{n+1} = V \times \mathbb{K}$ defined by
\[
A_{I,l} = \sum_{i<j \in I} a_{ij} \, e_i \wedge e_j + \sum_{i \in I} a_{il} \, e_i \wedge e_{n+1}
\]
whose associated matrix is given by
\[
\begin{aligned}
M_{I,l}  &= \left[ \begin{matrix}
M_I& v^t\\
-v & 0
\end{matrix}
\right]
\end{aligned}
\quad \mbox{with} \quad v = \left( a_{i_1l}, \ldots, a_{i_{r+1}l}\right).
\] 
We notice that $\varphi(A_{I,l})= 0$. As a matter of fact, either $l \in I$ and the result is straightforward (the cardinality of $I$ is odd), or $ l \notin I$ and, apart from re-ordering, $M_{I,l}$ is a sub-matrix of size $r+2$ of $M_A$ which has rank $r$, implying that $\det(M_{I,l})=\varphi(A_{I,l})^2= 0$. Then, by applying Proposition \ref{PROPPfaffian} (i) to the operator $A_{I,l}$ with $J=I\cup \{n+1\}$ and $j_0=n+1$, we obtain
\begin{eqnarray*}
0 = \varphi \left(A_{I,l},J\right) &= & \frac{1}{s+1} \sum_{j\in J\setminus \{j_0\}} \epsilon(J,j_0)\cdot \epsilon(J\setminus \{j_0\},j) \cdot \left(-a_{jl}\right) \cdot \varphi \left(A_{I,l},J\setminus\{j_0,j\}\right) \\
& = &  \frac{\epsilon(J,j_0)}{s+1}  \sum_{i\in I} \epsilon(I,i) \cdot a_{li} \cdot \varphi \left(A,I\setminus\{i\}\right)\\
& = &  \frac{\epsilon(J,j_0)}{s+1} A_{I,l} \left( e_l, \sum_{i\in I} \epsilon(I,i)  \varphi \left(A,I\setminus\{i\}\right) \cdot e_i \right) = \frac{\epsilon(J,j_0)}{s+1} A_{I,l} \left( e_l, \mathcal{Z}_I\right).
\end{eqnarray*}
Since the above equality is verified for all $l\in \{1, \ldots,n\}$, we infer that  $\{Z_I\}_{I\in \Lambda_{r+1}} \subset \mbox{ker}(A)$. 

Then, we notice that the dimension of $\mbox{ker}(A)$ must be $n-\mbox{rank}(A)= n-r$. In particular, there exists $J \in \Lambda_r$ such that $\varphi(A,J) \neq 0$ and, without loss of generality, we may assume $J=\{1,\ldots,r\}$. Consider $I_l = J \cup \{l\}$ for every $l=r+1,\ldots,n$ and note that the vectors $\{\mathcal{Z}_{I_l}\}_{l=r+1,\ldots,n}$ are all linear independent. This implies that the dimension of $\{Z_I\}_{I\in \Lambda_{r+1}} \subset \mbox{ker}(A)$ is at least $n-r$, concluding the result.

\subsection{Proof of Lemma \ref{lem:ChangeToNormalForm}}\label{app:ChangeToNormalForm}

The morphism $\psi$ (and its extension $\widehat{\psi}$) will be obtained as a composition of surjective semi-algebraic morphisms satisfying property $(i)$ and $(ii)$:
\[
\varphi : \mathcal{U}^d_m \to \mathcal{V}^d_m(1), \quad \Phi_j : \mathcal{V}^d_m(j) \to \mathcal{Z}^d_m(j), \quad \Psi_j :\mathcal{Z}^d_m(j) \to \mathcal{V}^d_m(j+1),
\]
for $j=1,\ldots,m$, where $\mathcal{V}^d_m(m+1) = \mathcal{W}^d_m$ and $\psi = \Psi_{m} \circ \Phi_m \circ \cdots \circ \Psi_1 \circ \Phi_1 \circ \varphi$. In what follows, we introduce each one of these morphisms in the level of formal power series (we will denote them by $\widehat{\varphi}$, $\widehat{\Phi}_j$ and $\widehat{\Psi}_j$), and we will then show the properties of their restriction to jets. Let us start by defining the source and targets of each morphism:

\medskip
\noindent
\emph{The set $\mathcal{V}^d_m(j)$:} It is the $d$-jets at $\bar{x}$ of vector-fields $\{X^1,\ldots,X^m\}$ of the form:
\[
X^k = \partial_{x_k} + \sum_{i=1}^{m} A^k_i(x) \partial_{x_i}, \quad A^k_i(0)=0, \, i=1,\ldots,n,\, k=1,\ldots,m, 
\]
such that $A^k_i(x) \equiv 0$ for $i=1,\ldots,j-1$ and $k=1,\ldots,m$. Note that
\[
\begin{aligned}
\mathcal{V}^d_m(j) &=  \left( \R^{|I_1|}\right)^{(n-j+1)\times m } \times \cdots \times \left( \R^{|I_r|}\right)^{(n-j+1) \times m}.
\end{aligned}
\]
\medskip
\noindent
\emph{The set $\mathcal{Z}^d_m(j)$:} It is the $d$-jets at $\bar{x}$ of vector-fields $\{X^1,\ldots,X^m\}$ of the form:
\[
X^k = \partial_{x_k} + \sum_{i=1}^{n} A^k_i(x) \partial_{x_i}, \quad A^k_i(0)=0, \, i=1,\ldots,n,\, k=1,\ldots,m, 
\]
such that $A^k_i \equiv 0$ for $i=1,\ldots,j-1$ and $A^{j}_j \equiv 0$. Note that
\[
\begin{aligned}
\mathcal{Z}^d_m(j) &=  \left( \R^{|I_1|}\right)^{(n-j+1)\times m - 1} \times \cdots \times \left( \R^{|I_r|}\right)^{(n-j+1) \times m-1}.
\end{aligned}
\]
Let us now explicitly define the morphisms $\varphi$, $\Psi_j$ and $\Phi_j$:

\medskip
\noindent
\emph{Morphism $\widehat{\varphi}$:} First, fix $\widehat{X}=(X^1,\ldots,X^m) \in \mathcal{D}_{LI}$. By hypothesis, there exists a linear change of coordinates $\rho: \mathbb{R}^n \to \mathbb{R}^n$, which only depends on the values of $(X^1(\bar{x}),\ldots,X^m(\bar{x}))$ such that $(\rho^{\ast}X^1,\ldots,\rho^{\ast}X^m)=(Y^1,\ldots,Y^m)$ are such that $Y^j(\bar{x})=\partial_{x_j}$ for $j=1,\ldots,m$.

We claim that $\rho$ may be chosen in such a way that it is semi-algebraic in respect to $(X^1(\bar{x}),\ldots,X^m(\bar{x}))$. In fact, if $m=n$, then the claim is trivial since $\rho$ is chosen canonically; in the general case, $\rho$ may be chosen in different ways, depending on how one completes the list of vectors $(X^1(\bar{x}),\ldots,X^m(\bar{x}))$ in order to form a local basis of $T_{\bar{x}}\mathbb{R}^n$. We may always find a semi-algebraic stratification the space of parameters $\mathcal{U}_m^0$ and a locally defined coordinate system so that, in each strata, the choice of $\rho$ becomes canonical (for example, via a choice of ordering of coordinates in $\mathbb{R}^n$, that is, we chose to complete it with $(e_1,\ldots,e_{n-m})$ first; if not possible, by $(e_1,\ldots,e_{n-m-1},e_{n-m+1})$, etc). We may now define:
\[
\widehat{\varphi}(X^1,\ldots,X^m) = (\rho^{\ast}X^1,\ldots, \rho^{\ast}X^m).
\]
and $\varphi$ is semi-algebraic by construction. Property $(ii)$ is immediate; we now argue in a fiber-wise way that property $(i)$ is satisfied. We fiber $\mathcal{U}_m^d$ via the parameters $\sigma = (X^1(0),\ldots,X^m(0))$; denote by $F_{\sigma}$ one of these fibers. Note that $\rho$ is constant along this fiber. It is, furthermore, invertible, implying that $\varphi_{|F_{\sigma}}$ is a linear bijection, implying $(i)$.

\medskip
\noindent
\emph{Morphism $\widehat{\Phi}_j$:} is defined by:
\[
\widehat{\Phi}_1(X^1,\ldots,X^m) = \left( X^1,\ldots, X^{j-1}, U_j (x) X^j, X^{j+1}, \ldots, X^m\right)
\]
where $U_j(x) = 1/(1+A_j^j(x))$. Note that $\Phi_j$ is clearly surjective and semi-algebraic (in fact, it is polynomial). Property $(ii)$ easily follows from the fact that:
\[
[U_jX^j,X^k]\cdot p\mod \mathcal{I}_{\widehat{X}} = U_j h^{j,k} \mod \mathcal{I}_{\widehat{X}},
\]
for all $k=1,\ldots,m$. We now argue in a fiber-wise way that property $(i)$ is satisfied. Fiber $\mathcal{V}_m^d(j)$ via the parameters $\lambda = (A_j^j)$; denote by $F_{\lambda}$ one of these fibers. Note that the unit $U_j(x)$ is constant along each one of the fibers $F_{\lambda}$, implying that $(\Phi_j)_{|F_{\lambda}}$ is a linear mapping. Furthermore, dividing by $U_j(x)$ would provide an inverse for $(\Phi_j)_{|F_{\lambda}}$, implying that $(\Phi_j)_{|F_{\lambda}}$ is a linear bijection, implying $(i)$. 

\medskip
\noindent
\emph{Morphism $\widehat{\Psi}_j$:} is defined by:
\[
\begin{aligned}
\widehat{\Psi}_j(X^1,\ldots,X^m) = \big(X^1 - A^{1}_jX^j,& \ldots, X^{j-1} - A^{j-1}_j X^j, X^j,\\
& X^{j+1} - A^{j+1}_j X^j,  \ldots, X^m- A^{m}_j X^j\big).
\end{aligned}
\]
Note that $\Psi_j$ is clearly surjective and semi-algebraic (in fact, it is polynomial). In order to prove property $(ii)$ note that:
\[
[ X^k - A^{k}_jX^j  ,X^l - A^{l}_jX^j]\cdot p\mod \mathcal{I}_{\widehat{X}} = h^{kl} - A^l_j h^{kj} - A^k_j h^{jl}  \mod \mathcal{I}_{\widehat{X}},
\]
for all $k, l=1,\ldots,m$. In particular, $\mathcal{L}_{\widehat{\Psi}_j(\widehat{X})}^2$ can be obtained from $\mathcal{L}_{\widehat{X}}^2$ by the following operation: we subtract to the $k$-line of $\mathcal{L}_{\widehat{X}}^2$ its $j$-line times $A^{k}_j$, and we do the symmetric operation for columns. This operation does not change the rank of the matrix, implying property $(ii)$. We now argue in a fiber-wise way that property $(i)$ is satisfied. Fiber $\mathcal{Z}_m^d(j)$ via the parameters $\lambda = (A_1^2(x), \ldots,A_1^m(x))$; denote by $F_{\lambda}$ one of these fibers. Note that $(\Psi_j)_{|F_{\lambda}}$ is a linear mapping which admits an inverse, implying that $(\Psi_j)_{|F_{\lambda}}$ is a linear bijection, implying $(i)$.

\bibliographystyle{plain}
\bibliography{Abnormal-Bibliography}

\begin{thebibliography}{10}

\bibitem{agrachev14}
Andrei~A. Agrachev.
\newblock Some open problems.
\newblock In {\em Geometric control theory and sub-{R}iemannian geometry},
  volume~5 of {\em Springer INdAM Ser.}, pages 1--13. Springer, Cham, 2014.

\bibitem{bfpr18}
A.~Belotto~da Silva, A.~Figalli, A.~Parusi\'nski, and L.~Rifford.
\newblock Strong {S}ard conjecture and regularity of singular minimizing
  geodesics for analytic sub-{R}iemannian structures in dimension 3.
\newblock {\em Invent. Math.}, 229(1):395--448, 2022.

\bibitem{bprfirst}
A.~Belotto~da Silva, A.~Parusiński, and L.~Rifford.
\newblock Abnormal subanalytic distributions in sub-riemannian geometry.
\newblock {\em hal-04881557}, 2025.

\bibitem{bprminimal}
A.~Belotto~da Silva, A.~Parusiński, and L.~Rifford.
\newblock {The Analytic minimal rank Sard Conjecture}.
\newblock {\em hal-03741318}, 2025.

\bibitem{br18}
Andr\'e{} Belotto~da Silva and Ludovic Rifford.
\newblock The {S}ard conjecture on {M}artinet surfaces.
\newblock {\em Duke Math. J.}, 167(8):1433--1471, 2018.

\bibitem{bv20}
Francesco Boarotto and Davide Vittone.
\newblock A dynamical approach to the {S}ard problem in {C}arnot groups.
\newblock {\em J. Differential Equations}, 269(6):4998--5033, 2020.

\bibitem{cjt06}
Y.~Chitour, F.~Jean, and E.~Tr\'elat.
\newblock Genericity results for singular curves.
\newblock {\em J. Differential Geom.}, 73(1):45--73, 2006.

\bibitem{em02}
Yakov Eliashberg and Nikolai Mishachev.
\newblock {\em Introduction to the $h$-Principle}, volume~48 of {\em Graduate
  Studies in Mathematics}.
\newblock American Mathematical Society, Providence, RI, 2002.

\bibitem{gg73}
M.~Golubitsky and V.~Guillemin.
\newblock {\em Stable mappings and their singularities}, volume Vol. 14 of {\em
  Graduate Texts in Mathematics}.
\newblock Springer-Verlag, New York-Heidelberg, 1973.

\bibitem{Hormander}
Lars H\"{o}rmander.
\newblock {\em The Analysis of Linear Partial Differential Operators I}, volume
  256 of {\em Grundlehren der Mathematischen Wissenschaften}.
\newblock Springer, Berlin, 2 edition, 1990.

\bibitem{lavau}
Sylvain Lavau.
\newblock A short guide through integration theorems of generalized
  distributions.
\newblock {\em Differ. Geom. Appl.}, 61:42--58, 2018.

\bibitem{lmopv16}
Enrico Le~Donne, Richard Montgomery, Alessandro Ottazzi, Pierre Pansu, and
  Davide Vittone.
\newblock Sard property for the endpoint map on some {C}arnot groups.
\newblock {\em Ann. Inst. H. Poincar\'e{} C Anal. Non Lin\'eaire},
  33(6):1639--1666, 2016.

\bibitem{lrtPreprint}
Antonio Lerario, Luca Rizzi, and Daniele Tiberio.
\newblock Quantitative approximate definable choices, 2024.

\bibitem{montgomery02}
Richard Montgomery.
\newblock {\em A tour of subriemannian geometries, their geodesics and
  applications}, volume~91 of {\em Mathematical Surveys and Monographs}.
\newblock American Mathematical Society, Providence, RI, 2002.

\bibitem{nar68}
Raghavan Narasimhan.
\newblock {\em Analysis on real and complex manifolds}, volume Vol. 1 of {\em
  Advanced Studies in Pure Mathematics}.
\newblock Masson \& Cie, \'Editeurs, Paris; North-Holland Publishing Co.,
  Amsterdam-London; American Elsevier Publishing Co., Inc., New York, second
  edition, 1973.

\bibitem{ov19}
Alessandro Ottazzi and Davide Vittone.
\newblock On the codimension of the abnormal set in step two {C}arnot groups.
\newblock {\em ESAIM Control Optim. Calc. Var.}, 25:Paper No. 18, 17, 2019.

\bibitem{riffordbook}
Ludovic Rifford.
\newblock {\em Sub-{R}iemannian geometry and optimal transport}.
\newblock SpringerBriefs in Mathematics. Springer, Cham, 2014.

\bibitem{riffordbourbaki}
Ludovic Rifford.
\newblock Singuli\`eres minimisantes en g\'eom\'etrie sous-{R}iemannienne.
\newblock Number 390, pages Exp. No. 1113, 277--301. 2017.
\newblock S\'eminaire Bourbaki. Vol. 2015/2016. Expos\'es 1104--1119.

\bibitem{Seid74}
A.~Seidenberg.
\newblock Constructions in algebra.
\newblock {\em Transactions of the American Mathematical Society},
  197:273--313, 1974.

\bibitem{sussmann08}
Hector~J. Sussmann.
\newblock Smooth distributions are globally finitely spanned.
\newblock In {\em Analysis and design of nonlinear control systems}, pages
  3--8. Springer, Berlin, 2008.

\bibitem{Winitzki}
Sergei Winitzki.
\newblock {\em Linear Algebra via Exterior Products}.
\newblock Lulu.com, 2010.

\bibitem{zz95}
I.~Zelenko and M.~Zhitomirski\u~i.
\newblock Rigid paths of generic {$2$}-distributions on {$3$}-manifolds.
\newblock {\em Duke Math. J.}, 79(2):281--307, 1995.

\end{thebibliography}

\end{document}